\newtheorem{prop}{Proposition}[section]
\newtheorem{lemma}{Lemma}[section]
\newtheorem{thm}{Theorem}[section]
\newtheorem{corollary}{Corollary}[section]
\newtheorem{defn}{Definition}[section]
\newtheorem{remark}{Remark}[section]
\newcommand{\norm}[1]{\left\lVert#1\right\rVert}
\newcommand{\inprod}[2]{\left<#1,#2 \right>}
\newcommand{\R}{\mathbb{R}}
\newcommand{\e}{\varepsilon}
\numberwithin{equation}{section}
\title{Density-valued solutions for the Boltzmann-Enskog process}
\author{Christian Ennis, Barbara R\"{u}diger, and Padmanabhan Sundar}
\begin{document}

\maketitle
\begin{abstract}
    The time evolution of moderately dense gas evolving in vacuum described by the Boltzmann-Enskog equation is studied. The associated stochastic process, the Boltzmann-Enskog process, was constructed in \cite{ABS} and further studied in \cite{sund},\cite{sund_uniqueness}. The process is given by the solution of a McKean-Vlasov equation driven by a Poisson random measure,  the compensator depending on the distribution of the solution \cite{ABS},\cite{sund}. The existence of a marginal probability density function at each time for the measure-valued solution is established here by using a functional-analytic criterion in Besov spaces \cite{deb-rom},\cite{four}. In addition to existence, the density is shown to reside in a Besov space. The support of the velocity marginal distribution is shown to be the whole of $\mathbb{R}^3$. 
\end{abstract}
\section{Introduction}
The Boltzmann equation describes the time evolution of the density function in position-velocity space for a classical particle subjected to possible collisions by other particles in a diluted gas \cite{Bo}, \cite{cerc} that expands in vacuum for a given initial distribution.  The Boltzmann equation forms the basis for the kinetic theory of gases.
\par
The density function $f(t, r, v)$ depends on time $t \ge 0$, the location variable $r \in \R^d$, and the velocity variable $v \in \R^d$ of the point particle. The Boltzmann equation is given by
\begin{equation}
\frac{ \partial f}{\partial t} (t, r, v)+  v \cdot \nabla_r f(t, r, v) = \mathcal{Q}(f, f)(t,r,v), \label{eqn1.1}
\end{equation}
where $\mathcal{Q}$, known as the collision operator, is a certain quadratic form in $f$.

\par 
In $\R^3$, set $\Lambda := \R^3\times (0, \pi] \times [0, 2\pi)$. Then $\mathcal{Q}$ can be written in the general form 
\begin{equation}
\mathcal{Q}(f, f) (t,r,v)= \int_{\Lambda} \{f(t, r, v^\star) f(t, r, u^\star)-f(t, r, v) f(t, r, u)\}B(v, du, d\theta)d\phi. \label{eqn1.2}
\end{equation}
Any particle travels along a straight line until an elastic collision occurs with another particle.  Each $v \in \R^3$ in (\ref{eqn1.2}) denotes the velocity of an incoming  particle which may hit, at the fixed location $r\in \mathbb{R}^3$,  particles whose velocity is fixed as $u\in \mathbb{R}^3$. 
Let $u^\star \in \R^3$ and $v^\star \in \R^3$ denote the resulting outgoing velocities (upon collision) corresponding to the incoming velocities $u$ and $v$ respectively. $\theta$ $\in (0,\pi]$ denotes the azimuthal or colatitude angle of the deflected velocity, $v^{\star}$ (see \cite{tanaka}). Having determined $\theta$, 
the longitude angle $\phi \in [0, 2\pi)$ measures in polar coordinates,  the location of  $v^*$, and hence  that of $u^\star$, as explained below. 
 
\noindent In the Boltzmann model, as the collisions are assumed to be elastic,  conservation of kinetic energy as well as momentum of the molecules holds. That is, considering particles of mass $m =1$, the following equalities hold:
\begin{equation}
\left\{
\begin{aligned}
u^\star+v^\star =u+v\\
|u^\star|^2+|v^\star|^2 =|u|^2+|v|^2 
\end{aligned}
\right. \label{CONSERVATION}
\end{equation} 
\begin{equation} 
\left\{
\begin{aligned}
v^\star=v+ ({\bf n},u-v) {\bf n}\\
u^\star=u-({\bf n},u-v){\bf n}
\end{aligned}
\right. \label{COLLISION}
\end{equation}
   
\noindent where 
\begin {equation}\label{eqn1.5}
{\bf n}=\frac{v^\star-v}{|v^\star-v|}.
\end {equation}
The collision dynamics are reversible since the Jacobian of the transformation (\ref{CONSERVATION}) has determinant 1 and  $(u^\star)^\star=u$. 

In the velocity sphere, the post-collision velocity  $u^*$ can be written in terms of  $\theta\in (0,\pi]$ measured from the center, and longitude angle $\phi \in  [0,2\pi)$ of  the deflection  vector ${\bf n}$ in the sphere with northpole $u$ and southpole $v$ centered at $\frac{u+v}{2}$, which are used in equation (\ref{eqn1.1}) and \eqref{eqn1.2} (see  e.g. \cite{Br},  \cite{HK}, \cite{tanaka87}).

\par 
$B(v, du, d\theta)$ is a probability kernel such that it is $\sigma$-additive positive measure defined on the Borel $\sigma$-field ${\mathcal B}(\R^3) \times {\mathcal B}((0, \pi])$ for each fixed $u \in \R^3$. The form of $B$ depends on the version of the Boltzmann equation. In Boltzmann's work 
 \cite{Bo}, 
\begin{align}
B(v, du, d\theta) &= |(u-v)\cdot n| du d\theta \notag\\
& = |u - v| du \cos \left(\frac{\theta}{2}\right) \sin \left(\frac{\theta}{2}\right) d\theta. \label{eqn1.6}
\end{align}
In the case where the molecules interact by a force which varies as the $n$th inverse power of the distance between their centers, one has 
\cite{cerc}, 
\begin{equation}
B(v, du, d\theta) = |(u-v)|^{\frac{n-5}{n-1}} b(\theta)dud\theta \label{eqn1.7}
\end{equation}
where $b$ is a measurable positive function of $\theta$. In particular, for $n = 5$, one has the case of ``Maxwellian molecules", where 
\[
B(v, du, d\theta) = b(\theta) du d\theta.
\]
The function $b(\theta)$ decreases and behaves like $\theta^{- 3/2}$ for $\theta \downarrow 0$, see, for e.g. \cite{cerc}. Note that 
in the latter case $\int_0^{\pi}b(\theta)d\theta =+ \infty$. 
\par In the present paper, we take
\begin{equation}
B(v, du, d\theta) = \sigma (|u - v|) du Q(d\theta) \label{eqn1.8}
\end{equation}
where  $Q$ is a $\sigma$-finite measure on ${\mathcal B}((0, \pi])$. Specifically, let $\nu \in (0,1)$. We assume that there are $0 < c < C$ such that
\begin{equation}
\label{defn-of-Q-measure}
    Q(d\theta) = b(\theta)d\theta \,\text{ with }\, c\theta^{-1-\nu} < b(\theta) < C\theta^{-1-\nu}.
\end{equation} 
When $Q(d\theta)$ is taken to be integrable, one speaks of the cutoff case. Thus, the system studied has no angular cutoff such as in \cite{Ar4}. Notice that $\nu = \frac{1}{2}$ corresponds to Maxwellian molecules for the appropriate $\gamma$. Friesen, R\"{u}diger, and Sundar  \cite{sund} consider a more general $Q$ with no explicit form, simply that $Q$ is $\sigma$-finite with $\int_0^\pi \theta Q(d\theta) < \infty$. Their results still apply in this case, as they note in their paper (see Example 1.1 in \cite{sund}) the work applies for the case of hard potentials and long-range interactions. The non-negative, continuous function $\sigma(z)$ behaves similar to $ (1 + z)^\gamma$ for $z \in \R^+$ and $0 < \gamma < 2$. Specifically, we assume there exists $\gamma \in (0,2]$ and $c_\sigma \geq 1$ such that 
\begin{align}
\label{sigma-lip}
    |\sigma(|z|) - \sigma(|w|)| \leq c_\sigma||z|^\gamma - |w|^\gamma|\,\,\,z,w \in \R^3\setminus\{0\}
\end{align}
and 
\begin{align}
    \label{sigma-gamma}
    \sigma(|z|) \leq c_\sigma
        (1 + |z|^2)^\frac{\gamma}{2}
\end{align}
Note that by concavity, the following bound may be used in calculations.
    \begin{align}
        \sigma(|z|) \leq c_{\sigma}(1 + |z|^\gamma)
    \end{align}
Morgenstern \cite{Mo} ``mollified" $Q(f, f)$ by replacing it by 
\begin{align*}
&Q_M(f, f)(t,r,u) \\
& = \int \{f(t, r, v^\star) f(t, q, u^\star)-f(t, r, v) f(t, q, u)\}K_M(r, q)B(v, du, d\theta) dq d\phi
\end{align*}
with some measurable $K_M$ and $B$ such that $K_M(r, q)B(v, du, d\theta)$ has a bounded density with respect to the Lebesgue measure on $\R^3 \times [0, \pi]$, and obtaining a global existence theorem in $L^1(\R^3 \times \R^3)$. Povzner \cite{Pov} obtained existence in the space of Borel measures in $x$, with the term $K_M(r, q)B(v, du, d\theta) dq d\phi$ replaced by $K_P(r-q, u-v)du dq$. Povzner's model considers linear growth in $u - v$ and being bounded in position and vanishing for large $r - q$, which is equivalent to the case of $\gamma = 1$. Povzner determines uniqueness for strong solutions under separate moment estimates. 
\par
According to \cite{cerc} (p. 399), this modification of Boltzmann's equation by Povzner is ``{\it close to physical reality}". 
Let us now associate to \eqref{eqn1.1}, \eqref{eqn1.2} its weak (in the functional analytic sense) version. The following proposition by 
 Tanaka  \cite {tanaka}  is important for the weak formulation of the equation.
\begin{prop}
\label {PropAppTanaka}  
Let $\Psi(r,v) \in C_c(\mathbb{R}^6)$, as a function of $r,v \in \mathbb{R}^3$.   With $B$ as in \eqref{eqn1.8}, we have
\begin{equation} 
\begin{split}
&\int_{\mathbb{R}^9\times (0,\pi]\times [0,2 \pi)} \Psi(r,v)f(t, r,v^\star) f(t, r, u^\star) B(v, du, d\theta) dr dv  d\phi
\\&= \int_{\mathbb{R}^9\times (0,\pi]\times [0,2 \pi)} \Psi(r,v^\star)f(t, r, v) f(t, r, u) B(v, du, d\theta) dr dv d\phi \label{eqn1.9}
\end{split}
\end{equation}
\end{prop}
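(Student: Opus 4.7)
The plan is to establish the identity by a change of variables that exploits the reversibility of elastic collisions, working at fixed $(r,\theta,\phi)$ and then integrating. Consider the transformation $T_{\theta,\phi}\colon(u,v)\mapsto(u^\star,v^\star)$ defined by \eqref{COLLISION}. Three properties of this map, all recorded in the excerpt, will be used: (i) its Jacobian determinant equals $1$, as follows from the conservation laws \eqref{CONSERVATION}; (ii) it preserves the relative speed, so $|u^\star-v^\star|=|u-v|$, and hence $\sigma(|u^\star-v^\star|)=\sigma(|u-v|)$; and (iii) it is an involution, i.e.\ $(u^\star)^\star=u$ and $(v^\star)^\star=v$.

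Next I would handle the angular variables. Since $(u+v)/2=(u^\star+v^\star)/2$ and $|u-v|=|u^\star-v^\star|$, the collision sphere with original poles $(u,v)$ coincides as a subset of $\R^3$ with the sphere whose poles are $(u^\star,v^\star)$. A short spherical-geometry computation then shows that the colatitude of $v$ measured from the new north pole $u^\star$ is again $\theta$, while the longitude is shifted only by a rotation of $[0,2\pi)$ that leaves the Lebesgue measure $d\phi$ invariant. Consequently the product measure $\sigma(|u-v|)\,du\,dv\,Q(d\theta)\,d\phi$ is preserved by the extended map $(u,v,\theta,\phi)\mapsto(u^\star,v^\star,\theta,\phi')$ on $\R^6\times(0,\pi]\times[0,2\pi)$, and after integration in $\phi$ this map behaves as an honest involution.

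Applying this substitution to the left-hand side of \eqref{eqn1.9} replaces $v$ by $v^\star$ in the argument of $\Psi$, and by the involution property turns the factors $f(t,r,v^\star)f(t,r,u^\star)$ into $f(t,r,(v^\star)^\star)f(t,r,(u^\star)^\star)=f(t,r,v)f(t,r,u)$. Finally, integrating over $r\in\R^3$ yields the right-hand side, finishing the argument.

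The expected obstacle is the geometric bookkeeping of the angular variables after the swap; Tanaka's choice of parametrization via the colatitude of $v^\star$ on the sphere with poles $u,v$ is exactly what makes this bookkeeping transparent, since the underlying sphere is unchanged and the symmetry between $u$ and $u^\star$ with respect to the equatorial plane forces the two colatitudes to agree. All remaining steps (Fubini to isolate the inner integral, Jacobian computation, and invariance of $\sigma(|u-v|)$) are routine.
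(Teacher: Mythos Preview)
The paper does not prove Proposition~\ref{PropAppTanaka}; it merely states the result and attributes it to Tanaka~\cite{tanaka}. Your outline is the standard argument and is correct: the involution $(u,v)\mapsto(u^\star,v^\star)$ has Jacobian~$1$, preserves $|u-v|$ and hence $\sigma(|u-v|)$, and leaves the collision sphere unchanged, so that after the integration in $\phi$ the measure $B(v,du,d\theta)\,dv\,d\phi$ is invariant and the substitution yields the right-hand side directly.
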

Consider the Boltzmann equation \eqref{eqn1.1} with collision operator \eqref{eqn1.2}. We multiply \eqref{eqn1.1} by a function  $\psi$  (of $(r, v) \in \mathbb{R}^6$)
 belonging to  $C^1_0(\mathbb{R}^6)$, 
and integrate with respect to $r$ and $v$. Using integration by parts and Proposition \ref{PropAppTanaka}, we arrive at the weak form of the Boltzmann equation: 
\begin{align}
&\int_{\mathbb{R}^6} \psi(r,v) \frac{ \partial f}{\partial t} (t, r, v)drdv - \int_{\mathbb{R}^6} f(t, r, v)(v,  \nabla_r \psi(r,v)) drdv \notag\\
&= \int_{\mathbb{R}^6} f(t, r, v) L_f\psi(r,v) drdv \label{WBE}
\end{align}
 for all $t\in \mathbb{R}_+$  with
\[
L_f\psi(r,v)= \int_{\mathbb{R}^3\times (0,\pi]\times [0,2 \pi)} \{\psi(r,v^\star)- \psi(r,v)\}f(t, r, u)B(v, du, d\theta)d\phi,
\]
where $B$ is as in \eqref{eqn1.8}.

\par
To proceed further, let us introduce an  approximation to the weak form of the Boltzmann equation by introducing a smooth real-valued, non-negative function $\beta$ with compact support defined on $\R^1$. Then (\ref{WBE}) is instead: 
 \begin{align}
& \int_{\mathbb{R}^6} \psi(r,v) \frac{ \partial f}{\partial t} (t, r, v)drdv - \int_{\mathbb{R}^6} f(t, r, v)(v,  \nabla_r \psi(r,v)) drdv\notag \\
 & = \int_{\mathbb{R}^6} f(t, r, v) L^{\beta}_f\psi(r,v) drdv 
 \label{WBEA} 
 \end{align}
 for all $\psi \in C^1_0(\mathbb{R}^6)$ and for all $t\in \mathbb{R}_+$, with
\[
L_f^\beta\psi(r,v) = \int_{\mathbb{R}^6\times (0,\pi]\times [0,2 \pi)} \{\psi(r,v^\star)- \psi(r,v)\}f(t, q, u) \beta(|r - q|)dqB(v, du, d\theta) d\phi . 
\]
Heuristically, when $\beta \to \delta_0$, then any solution of \eqref{WBEA} tends to a solution of Boltzmann's equation \eqref{WBE}, so that 
$\beta$ can be seen as a regularization for \eqref{WBE}. 
\par
Equation \eqref{WBEA} is thus the (functional analytic) weak form of an equation closely related to the Boltzmann equation, which can be written as 
\begin{equation}
\frac{\partial f}{\partial t} (t, r, v) + v \cdot \nabla_rf(t, r, v) = Q_E^{\beta}(f, f) (t,r,v), \label{EQ:03}
\end{equation}
with 
\begin{align*}
&Q_E^{\beta}(f, f) (t,r,v) \\
& = \int_{\Lambda} \int_{\R^3} \{f(t, r, v^*)f(t, q, u^*) - f(t, r, v)f(t, q,u)\} \beta(|r - q|)dq B(v, du, d\theta)d\phi.
\end{align*}
In the case where $\beta$ is replaced by the characteristic function (or a smooth version of it like in \cite{cerc}) of a ball of radius $\epsilon > 0$, this is Boltzmann-Enskog's equation used for (moderately) ``dense gases" taking into account interactions at distance $\epsilon$ between molecules. In its physical interpretation particles have a small radius and are allowed to overlap during an elastic collision.
\par
If $\mu_t$ denotes the Borel probability measure on $\R^6$ corresponding to a smooth density function $f(t,q, u)$, 
i.e.
\[
\mu_t(dq, du) = f(t, q, u) dq du,
\] 
then the equation \eqref{WBEA} can be 
written as
\begin{equation}
\frac{\partial}{\partial t} \langle \mu_t, \psi \rangle - \langle \mu_t, (v, \nabla_r \psi(r,v))\rangle = \langle \mu_t, L^{\beta}_{\mu_t}\psi \rangle \label{MtWBEA}
\end{equation}
where 
\[
L^{\beta}_{\mu_t}\psi(r,v) = \int_{\mathbb{R}^6\times (0,\pi]\times [0,2 \pi)} \{\psi(r,v^\star)- \psi(r,v)\} \beta(|r-q|)\sigma(|v-u|)Q(d\theta)\mu_t(dq, du)d\phi.  
\]
recalling the definition of $B$ \eqref{eqn1.8}, where we group $du$ with $f$. In the above, the sharp bracket $\langle \cdot, \cdot \rangle$ to denotes integration with respect to $\mu_t$.  If $\mu_t$ satisfies \eqref{MtWBEA}, we say that $\mu_t$ is a weak solution of the Boltzmann-Enskog equation.\\

\smallskip \noindent \textbf{Results on the spatially homogeneous Boltzmann Equation}\\
The spatially homogeneous Boltzmann equation is especially well-studied, with a plethora of literature on the solutions of the equation. It is obtained by integrating over space the Boltzmann equation. In his pioneering works \cite{tanaka},\cite{tanaka87} Tanaka provided a probabilistic interpretation and approach to the space-homogeneous Boltzmann equation. His results influenced many future research related to the (spatially homogeneous) Boltzmann equation
%, from which would like to mention
%Horowitz and Karandikar \cite{HK90},Sznitman \cite{S91}, Fournier \cite{F15},Liping \cite{X16}, Fournier and Mischler \cite{FM16}, Cortez and Fontbona \cite{CF18} 
yielding newer and deeper insights into the dynamics of the Boltzmann equation including extensive generalization of Tanaka's original works. In fact, Tanaka  \cite{tanaka} established that the spatially homogeneous case has a weak solution which can be identified as the distribution of a stochastic process. Fournier \cite{four} was able to showcase the existence of a probability density function. The probabilistic interpretations of the spatially homogeneous Boltzmann equation inspire such a treatment of the Boltzmann-Enskog equation. A common overlap between probabilistic interpretations of these different systems is the observation of conservation laws, which in turn, influence the laws of solutions. \\
  One expects solutions to the Boltzmann or Boltzmann-Enskog equation to have is the conservation of momentum and energy. This can be stated deterministically for the particle density function. The relation to the probabilistic interpretation is inherent, provided the solution is a probability density function as well. Let $f_0(r,v) \geq 0$ be the particle density function of the gas at initial time $t = 0$, with the time evolution $f_t = f_t(r,v)$ being obtained from the Boltzmann-Enskog equation (\ref{EQ:03}). Under the conservation laws (\ref{CONSERVATION}) and (\ref{COLLISION}) it is assumed that the solution will satisfy, under few conditions on $\{f_t\}_{t \geq 0}$, the conservation laws below. 
\begin{align}
\label{conv-laws}
    \int_{\R^{6}}f_t(r,v)drdv &= \int_{\R^6}f_0(r,v)drdv \notag\\
    \int_{\R^{6}}vf_t(r,v)drdv &= \int_{\R^6}vf_0(r,v)drdv\\
    \int_{\R^{6}}|v|^2f_t(r,v)drdv &= \int_{\R^6}|v|^2f_0(r,v)drdv \notag
\end{align}
See e.g. the appendix at the end of this article for precise statements.
 This work relies on a stochastic interpretation of the Boltzmann-Enskog system. Albeverio, R\"udiger and Sundar constructed a stochastic process whose law weakly solves the Boltzmann-Enskog equation \cite{ABS}. Under sufficient conditions, they were able to prove its existence in \cite{ABS}, for the case of a collision kernel described by the Maxwell interaction,  and later for other cases in \cite{sund}, including the case of long-range interaction without angular cutoff. Further, moment estimates, dependent on time, were established for these solutions in \cite{sund}. The Boltzmann and Enskog equations are well-studied in the field of gas models. A good overview of the physical interpretation of the Boltzmann equation, as well as some properties known from a mathematical standpoint, can be seen from Cercignani \cite{cerc}. More recent results can be seen in the review paper from Villani \cite{vill}. Our paper will focus on the Boltzmann-Enskog equation (\ref{EQ:03}), which should not be confused with the Boltzmann equation (spatially homogeneous or otherwise). Results for weak solutions will be recalled and discussed, before the main results of this paper are given.\\
The Boltzmann-Enskog process was constructed in \cite{ABS} from the solution of a Fokker-Planck (Kolmogorov forward) equation driven by a jump process. Existence results with higher regularity were determined in \cite{sund}, and uniqueness was studied in \cite{sund_uniqueness}. The main objective of this paper is to establish existence of a density for the velocity marginal of a measure-valued solution to the Boltzmann-Enskog equation. The Boltzmann-Enskog process is utilized so that probabilistic methods can be utilized. Specifically, aspects of additive processes are beneficial in studying the law of the Boltzmann-Enskog process, as their characteristic functions are more easily computable than a general stochastic process. These probabilistic methods are then used for a functional-analytic result \cite{deb-rom}, which gives the main result of the paper.\\
\textbf{Organization of the Paper}\\
This paper is organized as follows. Section 1 introduces the setup of the Boltzmann-Enskog equation and its weak formulation. Section 2 gives preliminary results that are necessary and useful in later sections of this work. An essential theorem in this paper, due to Debussche and Romito \cite{deb-rom} is introduced with a brief discussion on its use by Fournier \cite{four}. The Boltzmann-Enskog process as defined in \cite{sund} is properly introduced as well.
Section 3 introduces an additive process, indexed by $\varepsilon$, which closely approximates the Boltzmann-Enskog process, converging to it in $L^1(\mathbb{R}^6)$. Section 4 establishes support on $\mathbb{R}^3$ for the marginal distribution of the velocity component of the Boltzmann-Enskog process. Section 5 provides useful formulae to additive processes, with proof of stochastic continuity for the velocity component of the approximating process. The characteristic function of the approximating process is studied deterministically. The relevance of these estimates to the distribution of the Boltzmann-Enskog process is established. Section 6 is dedicated to the proof of the main theorem, utilizing results on the approximating process's distribution. 
\section{Preliminaries}
\label{assume}
Equation (\ref{EQ:03}) only makes sense for function-valued solutions. As such, it is necessary to further study the weak formulation \eqref{MtWBEA} of the Boltzmann-Enskog equation. Before doing so, we introduce terms to rewrite $\textbf{n}$ introduced in \eqref{eqn1.5} as sum of vectors $n_1$ and $n_2$ which are parallel and orthogonal to $u - v$ respectively. Recall that $S^d = \{v \in \mathbb{R}^{d+1}: |v| = 1\}$, and define $S^{d-2}(u-v) := \{w \in \mathbb{R}^d: |u-v| = |w|, (u-v,w) = 0\}$. The following lemma is Lemma 2.1 from \cite{sund}, which we write for dimension $d = 3$. 
\begin{lemma}
    Let $u,v \in \mathbb{R}^3$ with $u \neq v$. Let $n \in S^2$. Then there exits $(\varphi,\xi) \in [0,\pi]\times S^1$ and a measurable, bijective function $\Gamma(u-v,\circ): S^1 \to S^1(u-v)$, $\xi \to \Gamma(u-v,\xi)$ such that
    \begin{equation*}
        n = \cos(\varphi)\frac{u-v}{|u-v|} + \sin(\varphi)\frac{\Gamma(u-v,\xi)}{|u-v|}
    \end{equation*}
    where $\varphi$ is the angle between $u - v$ and $n$.
\end{lemma}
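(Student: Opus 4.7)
The lemma is essentially a parameterization statement: any unit vector $n \in S^2$ in $\mathbb{R}^3$ admits an orthogonal decomposition into a component along $u-v$ and a component in the plane $(u-v)^\perp$, and the second component, normalized, can be described by an element of $S^1$ through a measurable identification of $(u-v)^\perp$ with $\mathbb{R}^2$. My plan therefore splits naturally into a (trivial) geometric step and a (slightly delicate) measurability step.

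First I would set $\varphi \in [0,\pi]$ to be the unique angle with
\[
\cos\varphi \;=\; \frac{(u-v)\cdot n}{|u-v|},
\]
which is well-defined since $u \neq v$. Write $n = \cos(\varphi)\,\frac{u-v}{|u-v|} + w$, where $w := n - \cos(\varphi)\frac{u-v}{|u-v|}$. A direct computation gives $(u-v)\cdot w = 0$ and $|w|^2 = 1 - \cos^2(\varphi) = \sin^2(\varphi)$, so $w$ lies in the plane orthogonal to $u-v$ and has norm $\sin\varphi \geq 0$. Thus $|u-v|\,w/\sin(\varphi)$ (when $\sin\varphi > 0$) is an element of $S^1(u-v)$; in the degenerate case $\sin\varphi = 0$ one has $n = \pm(u-v)/|u-v|$ and any choice of the parameter $\xi$ works.

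The substantive step is to build the measurable bijection $\Gamma(u-v,\cdot)\colon S^1 \to S^1(u-v)$. I would fix a measurable orthonormal frame $\{f_1(u-v),f_2(u-v)\}$ of the plane $(u-v)^\perp \subset \mathbb{R}^3$, depending measurably on $u-v \in \mathbb{R}^3\setminus\{0\}$, and define
\[
\Gamma(u-v,\xi) \;:=\; |u-v|\bigl(\xi_1 f_1(u-v) + \xi_2 f_2(u-v)\bigr), \qquad \xi = (\xi_1,\xi_2) \in S^1.
\]
For each fixed $u-v$, this is a linear isometric identification of $S^1$ with $S^1(u-v)$, hence a bijection. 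Writing $w = \sin(\varphi)\bigl(\xi_1 f_1(u-v) + \xi_2 f_2(u-v)\bigr)$ with $(\xi_1,\xi_2)\in S^1$ yields the claimed formula.

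The only real obstacle is the measurable choice of the frame $\{f_1,f_2\}$: the hairy-ball theorem precludes a continuous choice on all of $\mathbb{R}^3\setminus\{0\}$, but a Borel-measurable choice exists. I would construct it explicitly by partitioning $\mathbb{R}^3\setminus\{0\}$ into two Borel pieces according to whether $(u-v)/|u-v|$ is closer to a fixed reference vector $e_0$ or to $-e_0$, and applying Gram--Schmidt to $\{e_0,(u-v)/|u-v|\}$ or to a second reference vector on the exceptional set; each piece yields a smooth orthonormal frame, and gluing gives a Borel frame on all of $\mathbb{R}^3\setminus\{0\}$. Alternatively, one may invoke a measurable selection theorem applied to the set-valued map $(u-v) \mapsto \{\text{ordered orthonormal bases of }(u-v)^\perp\}$, which has closed nonempty values in the compact Stiefel manifold $V_2(\mathbb{R}^3)$. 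This completes the construction of $\Gamma$ and hence of the lemma.
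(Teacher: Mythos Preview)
Your proof is correct and essentially matches the construction the paper relies on. The paper does not prove this lemma itself; it cites it as Lemma~2.1 of \cite{sund}. However, later in Section~5 the paper invokes precisely your measurable-frame idea (following Fournier \cite{four}): for $X\in\mathbb{R}^3\setminus\{0\}$ one chooses $I(X),J(X)$ so that $\bigl(X/|X|,\,I(X)/|X|,\,J(X)/|X|\bigr)$ is an orthonormal basis with $X\mapsto (I(X),J(X))$ measurable, and sets $\Gamma(X,\phi)=(\cos\phi)I(X)+(\sin\phi)J(X)$. This is your $f_1,f_2$ construction up to notation, so there is no substantive difference in approach.
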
 
Algebraic and geometric arguments can be made to transform the angle $\varphi$ to give another parameterization that allows better use of the symmetries in the collisions to instead depend on $\theta$ where $\theta$ is the angle between $v - u$ and $v^* - u^*$. The reader is referred to a brief calculation shown on pages 5 and 6 of \cite{sund}. These calculations lead to the another parameterization,
\begin{align}
\label{new-system}
    \begin{cases} v^{\star} &= v + \alpha(v,u,\theta,\xi) \\ u^{\star} &= u - \alpha(v,u,\theta,\xi) \end{cases}
\end{align}
where 
\begin{align}
    \label{alpha-defn}
    \alpha(v,u,\theta,\xi) = \sin^2\left(\frac{\theta}{2}\right)(u-v) + \frac{\sin(\theta)}{2}\Gamma(u-v,\xi)
\end{align}
so that (\ref{new-system}) may be used instead of (\ref{COLLISION}). Note that making the definition $\alpha(v,v,\theta,\xi) = 0$ allows the parameterization to still hold if $v = u$. Recall the equations (\ref{COLLISION}). It has been shown by Tanaka \cite{tanaka} that $(u,v) \to (u-v,n)n$ cannot be smooth. However, he introduced a transformation of parameters, which is bijective with the Jacobian equal to 1, to alleviate this issue. This transformation was generalized to higher dimensions in \cite{four2}. We will introduce this transformation, adapted to our parameterization in the following lemma.  
\begin{lemma}[Lemma 3.1 from \cite{tanaka}]
\label{xi-lemma}There exists a measurable function $\xi_0(x-x',y-y',\xi)$ on $\mathbb{R}^{12}\times S^2$ such that 
\begin{equation}
\label{alph-cont}
    |\alpha(u,v,\theta,\xi) - \alpha(u^*,v^*,\theta,\xi + \xi_0(v-v^*,u-u^*,\xi))| \leq 2\theta(|v-u| + |v^*-u^*|)
\end{equation}
\end{lemma}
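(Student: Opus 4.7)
The plan is to construct $\xi_0$ by transporting $\xi$ through a measurable rotation that sends the pre-collision relative velocity $v-u$ to the post-collision relative velocity $v^*-u^*$, so that the bijection $\Gamma$ commutes with this transport up to the shift $\xi\mapsto\xi+\xi_0$. This is essentially Tanaka's construction (\cite{tanaka}, Lemma 3.1), adapted to the parameterization (\ref{alpha-defn}).

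First I would record the kinematic identities. By the elastic collision relations (\ref{CONSERVATION}), $|v^*-u^*|=|v-u|$, and by the parameterization following (\ref{new-system}), $\theta$ is precisely the angle between $v-u$ and $v^*-u^*$. Hence for $v\neq u$ there exists a rotation $R=R(v-u,v^*-u^*)\in SO(3)$ of angle $\theta$ mapping $v-u$ to $v^*-u^*$; on $\{v=u\}$ I would set $\xi_0=0$ (both sides of (\ref{alph-cont}) vanish by the convention $\alpha(v,v,\theta,\xi)=0$). Such an $R$ can be chosen measurably in $(v-u,v^*-u^*)$: for $\theta\in(0,\pi)$ take the rotation whose oriented axis is along $(v-u)\times(v^*-u^*)$, and on the antipodal locus $\{\theta=\pi\}$ fix a measurable section of the $SO(2)$-bundle of admissible axes. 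Since $R$ maps the orthogonal complement of $v-u$ isometrically onto that of $v^*-u^*$, it restricts to an isometry $S^1(u-v)\to S^1(u^*-v^*)$. Using the measurable bijections of Lemma 2.1, I then define $\xi_0=\xi_0(v-v^*,u-u^*,\xi)\in S^1$ as the unique element of the group $S^1$ satisfying
\[
\Gamma(u^*-v^*,\xi+\xi_0)=R\,\Gamma(u-v,\xi),
\]
and measurability of $\xi_0$ follows from that of $R$, $\Gamma$ and $\Gamma^{-1}$.

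For (\ref{alph-cont}) itself I would split $\alpha(u,v,\theta,\xi)-\alpha(u^*,v^*,\theta,\xi+\xi_0)$ into a radial part $\sin^2(\theta/2)[(v-u)-(v^*-u^*)]$ and a tangential part $\tfrac{\sin\theta}{2}[\Gamma(v-u,\xi)-\Gamma(v^*-u^*,\xi+\xi_0)]$. The radial part is controlled using $(v-u)-(v^*-u^*)=-2\alpha(v,u,\theta,\xi)$ together with $|\alpha|=\sin(\theta/2)|v-u|$, yielding modulus $2\sin^3(\theta/2)|v-u|$. The tangential part, by the defining property of $\xi_0$, equals $\tfrac{\sin\theta}{2}[\Gamma(v-u,\xi)-R\,\Gamma(v-u,\xi)]$; since for any rotation $R$ of angle $\theta$ one has $|w-Rw|\leq 2\sin(\theta/2)|w|$ and $|\Gamma(v-u,\xi)|=|v-u|$, this contributes $\sin\theta\,\sin(\theta/2)|v-u|$. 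Summing the two parts and using $\sin(\theta/2)\leq\theta/2$, $\sin\theta\leq\theta$, and $\theta\leq\pi$ gives a bound of order $\theta|v-u|$, which is absorbed into $2\theta(|v-u|+|v^*-u^*|)=4\theta|v-u|$ with room to spare.

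The main obstacle is the measurable selection of $R$ on the antipodal locus $\{v^*-u^*=-(v-u)\}$, where the axis of rotation is not uniquely determined. Tanaka handled this by an explicit measurable selection inside the $SO(2)$-bundle of admissible axes, and the same device, generalized to arbitrary dimension in \cite{four2}, suffices here. Once a measurable $R$ is in place, the remainder of the argument reduces to the elementary trigonometric estimates above.
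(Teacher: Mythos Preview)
The paper itself does not prove this lemma; it is quoted from Tanaka \cite{tanaka} (with the higher-dimensional extension in \cite{four2}). Your attempt, however, rests on a misreading of the statement: you treat $(u^*,v^*)$ as the \emph{post-collision} velocities produced from $(u,v)$ by an elastic collision, and then exploit the resulting identities $|v^*-u^*|=|v-u|$ and ``$\theta$ is the angle between $v-u$ and $v^*-u^*$''. In this lemma the pair $(u^*,v^*)$ is an \emph{arbitrary} second pair of vectors in $\mathbb{R}^3$, unrelated to $(u,v)$, and $\theta$ is the free angular argument of $\alpha$, not determined by the data. This is precisely how the lemma is invoked later, in the proof of Proposition~\ref{approx-def}, with $(u,v)=(V_s,u_s)$ and $(u^*,v^*)=(V_{t-\varepsilon},u_s)$, which are certainly not a collision pair. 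Consequently your rotation $R$ of angle $\theta$ carrying $v-u$ to $v^*-u^*$ need not exist (the two vectors may have different lengths and the angle between them has nothing to do with $\theta$), and the radial/tangential decomposition you perform under the collision kinematics collapses entirely.

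The correct construction, as in Tanaka and \cite{four2}, works with the two \emph{arbitrary} relative velocities $X=u-v$ and $X'=u^*-v^*$: one selects a measurable $\xi_0=\xi_0(X,X',\xi)$ so that the orthonormal frame defining $\Gamma(X',\cdot)$ is aligned with that defining $\Gamma(X,\cdot)$, which yields $|\Gamma(X,\xi)-\Gamma(X',\xi+\xi_0)|\leq C|X-X'|$ directly. The $\alpha$-bound then follows from (\ref{alpha-defn}) together with $\sin^2(\theta/2)\leq\theta/2$ and $\sin\theta\leq\theta$. The resulting right-hand side is of order $\theta(|v-v^*|+|u-u^*|)$; the printed bound $2\theta(|v-u|+|v^*-u^*|)$ appears to be a typographical slip, as the way it is applied in the proof of Proposition~\ref{approx-def} (giving $2\theta|V_s-V_{t-\varepsilon}|$) confirms.
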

In addition to (\ref{alph-cont}), we will utilize the following inequality as well. 
\begin{align}
    \label{alpha-lip}
    |\alpha(v,u,\theta,\xi)| \leq \frac{1}{2}\theta|v-u|
\end{align}
which can be determined as $\Gamma$ is a bijection preserving magnitude and its connection to $\alpha$. More detail on the construction of $\alpha$ can be found in \cite{four}, with a higher dimensional construction given in \cite{sund}, and the references therein.\\
The weak formulation of the Boltzmann-Enskog equation (\ref{MtWBEA}) can now be given using the parameterization (\ref{new-system}).
\\ \textbf{Measure-Valued Solutions to the Boltzmann-Enskog equation}\\
Allow $f_t$ to be a sufficiently smooth solution to (\ref{EQ:03}). Testing (\ref{EQ:03}) against a smooth function $\psi$ and integrating by parts gives
\begin{align}\label{EQ:102}
 &\ \int \limits_{\R^{6}}\psi(r,v)\left( \frac{\partial f_t(r,v)}{\partial t} + v \cdot \nabla_r f_t(r,v)\right)drdv
 \\ \notag &= \frac{d}{dt} \int \limits_{\R^{6}}\psi(r,v) f_t(r,v)dr dv - \int \limits_{\R^{6}} (v \cdot \nabla_r \psi)(r,v) f_t(r,v)drdv.
\end{align}
Likewise,
\begin{align}\label{EQ:101}
 \int \limits_{\R^{6}}\psi(r,v)\mathcal{Q}(f_t,f_t)(r,v)dr dv = \int \limits_{\R^{12}}\sigma(|v-u|)\beta(r-q)(\mathcal{L}\psi)(r,v;u)f_t(r,v)f_t(q,u)drdv dq du,
\end{align}
where $\mathcal{L}\psi$ is, for $\psi \in C^1(\R^{6})$, with $\Xi = (0,\pi] \times S^{1}$ defined by
\[
(\mathcal{L}\psi)(r,v;u) = \int \limits_{\Xi}\left( \psi(r,v + \alpha(v,u,\theta,\xi)) - \psi(r,v)\right) Q(d\theta)d\xi.
\]
For $\psi \in C^1(\R^{6})$, define
\begin{align}
\label{A-operator}
 (\mathcal{A}\psi)(r,v;q,u) &= v \cdot(\nabla_r \psi)(r,v) + \sigma(|v-u|)\beta(r-q)(\mathcal{L}\psi)(r,v;u).
\end{align}
Then combining (\ref{EQ:101}) and (\ref{EQ:102}) with the conservation laws (\ref{conv-laws}), shows that $f_t$ satisfies the weak formulation of the Boltzmann-Enskog equation
\begin{align}\label{Enskog-weak}
 \frac{d}{dt}\int \limits_{\R^{6}} \psi(r,v)f_t(r,v)dr dv =  \int \limits_{\R^{12}} (\mathcal{A}\psi)(r,v;q,u) f_t(r,v)f_t(q,u)drdvdqdu,
\end{align}
Equation (\ref{Enskog-weak}) can then be interpreted in the sense of measure-valued solutions where we instead consider $\mu_{t}(dr,dv) = \int_{\mathbb{R}^6}f_t(r,v)drdv$ and rewrite (\ref{Enskog-weak}) as
\begin{align}
\label{measure-valued-enskog}
\frac{d}{dt}\int_{\mathbb{R}^6}\psi(r,v)\mu_t(dr,dv) = \int_{\mathbb{R}^{12}}(\mathcal{A}\psi)(r,v;q,u)\mu_t(dr,dv)\mu_t(dq,du).   
\end{align}
 The form of equation (\ref{measure-valued-enskog}) suggested in \cite{ABS} to look upon it as a Fokker-Planck equation for jump Markov processes: to wit, consider the processes $(R_t,V_t)$, 
 \begin{align}
     \label{(Rt,Vt)}
       R_t & = R_0 + \int_0^t V_s\,ds\\
       V_t & = V_0 + \int_0^t\int_{\Xi\times\chi\times\mathbb{R}_+}\hat{\alpha}(V_{s-},R_s,u_s,q_s,\theta,\xi,z)dN(s,\theta,\xi,\mu,z)
 \end{align}
 where $\hat{\alpha}(v,r,q,u,\theta,\xi,z) := \alpha(v,u,\theta,\xi)1_{[0,\sigma(|v-u|)\beta(r-q)]}(z)$ and $N$ is a Poisson random measure with compensator
 \begin{equation*}
 d\hat{N}(s,\theta,\xi,\eta,z) = dsQ(d\theta)d\xi d\mu_s(dq,du) dz \text{ on } \mathbb{R}_+\times \Xi \times \chi \times \mathbb{R}_+   
 \end{equation*}
 with $\mu_s$ being the law of $(R_s,V_s)$. For, upon invoking the It\^{o} formula to compute $E(\psi(R_t,V_t))$ for suitable $\psi$, we obtain equation (\ref{measure-valued-enskog}) in its integral form  (see \cite{ABS}). Since $N$ depends on $\mu_\cdot$, we start in an equivalent manner to define the Boltzmann-Enskog process as follows. For a random variable $Z$, we denote its law by $\mathcal{L}(Z)$.
\begin{defn}
\label{Boltzmann-Enskog process}
    Let $\mu_0 \in \mathcal{P}(\mathbb{R}^6)$ be given. A Boltzmann-Enskog process with initial distribution $\mu_0$ consists of the following:
    \begin{itemize}
        \item [a)] A probability space $(\chi,d\eta)$ and a Poisson random measure $N$ with compensator
        \begin{align*}
            d\hat{N}(s,\theta,\xi,\eta,z) = dsQ(d\theta)d\xi d\eta dz \text{ on } \mathbb{R}_+\times \Xi \times \chi \times \mathbb{R}_+
        \end{align*}
        defined on a stochastic basis $(\Omega,\mathcal{F},\mathcal{F}_t,\mathbb{P})$ with the usual conditions.
        \item [b)] A c\`{a}dl\`{a}g process $(q_t,u_t) \in \mathbb{R}^6$ on $(\chi,d\eta)$ and an $\mathcal{F}_t$-adapted c\`{a}dl\`{a}g process $(R_t,V_t)$ on $(\Omega,\mathcal{F},\mathcal{F}_t,\mathbb{P})$ such that $\mathcal{L}(R_t,V_t) = \mathcal{L}(q_t,u_t)$ for any $t \geq 0$, with $\mathcal{L}(R_0,V_0) = \mu_0$. Further, for all $T > 0$
        \begin{align*}
            \int_0^T E\left(|V_t|^{1 + \gamma}\right)dt < \infty
        \end{align*}
        and for $\hat{\alpha}(v,r,q,u,\theta,\xi,z) := \alpha(v,u,\theta,\xi)1_{[0,\sigma(|v-u|)\beta(r-q)]}(z)$,
        \begin{align*}
            \begin{cases}
                R_t & = R_0 + \int_0^t V_s\,ds\\
                V_t & = V_0 + \int_0^t\int_{\Xi\times\chi\times\mathbb{R}_+}\hat{\alpha}(V_{s-},R_s,u_s,q_s,\theta,\xi,z)dN(s,\theta,\xi,\eta,z)
            \end{cases}
        \end{align*}
    \end{itemize}
\end{defn}

\smallskip \noindent {\bf List of notation} \\
Before beginning our work, we first introduce important results established previously in \cite{ABS},\cite{sund}, and \cite{four}. We likewise, define functional spaces and measure spaces utilized in later theorems. Below is a list of spaces we will reference. 
\begin{itemize}
    \item $\mathcal{M}(\mathbb{R}^d)$ is the set of nonnegative finite measures on $\mathbb{R}^d$.
    \item $\mathcal{P}(\mathbb{R}^d)$ is the set of probability measures on $\mathbb{R}^d$.
    \item $\mathcal{P}_p(\mathbb{R}^d)$ is the set of probability measures $\mathbb{P}$ on $\mathbb{R}^d$ such that $\int_{\mathbb{R}^d}|v|^p\,d\mathbb{P} < \infty$.
    \item $C_b(\mathbb{R}^d)$ is the set of real-valued continuous, bounded functions on $\mathbb{R}^d$.
    \item $C^1_c(\mathbb{R}^d)$ is the set of compactly supported, differentiable functions on $\mathbb{R}^d$.
    \item $L^p(\mathbb{R}^d)$ is the set of Lebesgue $p$-integrable functions on $\mathbb{R}^d$, equipped with the usual $L^p$ norm, $\norm{f}_{L^p(\mathbb{R}^d)} := \left(\int_{\mathbb{R}^d}|f(x)|^p\,dx\right)^{1/p}$.
    \item For $s \in (0,1)$, the Besov space $B^s_{1,\infty}$ consists of all functions $f$ such that 
    \begin{equation*}
        \norm{f}_{B^s_{1,\infty}(\mathbb{R}^d)} := \norm{f}_{L^1(\mathbb{R}^d)} + \sup_{h\in\mathbb{R}^d,0<|h|<1}|h|^{-s}\int_{\mathbb{R}^d}|f(x+h) - f(x)|\,dx < \infty
    \end{equation*}
    \item For $\alpha \in (0,1)$, $C^\alpha_b(\mathbb{R}^d)$ is the set of all functions $g$ such that 
    \begin{equation*}
        \norm{g}_{C^\alpha_b(\mathbb{R}^d)} := \sup_{x\in\mathbb{R}^d}|g(x)| + \sup_{x,y\in\mathbb{R}^d\\x\neq y} \frac{|g(x) - g(y)|}{|x-y|^\alpha} < \infty.
    \end{equation*}
\end{itemize}
\textbf{Existence of Density for the Boltzmann Process}\\
With existence of the Boltzmann-Enskog process established in \cite{ABS}, generalized with \cite{sund}, and uniqueness studied in \cite{sund_uniqueness}, there is a good amount of information available on the distribution. The main result of the paper is to establish a density for the velocity marginal distribution of the Boltzmann-Enskog process. For this goal, we will utilize a result made by Debussche and Romito \cite{deb-rom}. This result was utilized by Fournier \cite{four} for the spatially homogeneous Boltzmann equation, where he simplifies the result for a less general case.
\begin{thm}
    \label{deb-rom-exist}
    Let $g \in \mathcal{M}(\R^d)$. Assume there are $0 < \alpha < a < 1$ and a constant $\kappa$ such that for all $\phi \in C^\alpha_b(\R^d)$, all $h \in \R^d$ with $|h| \leq 1$,
    \begin{align}
    \label{deb-rom-eqn}
        \left|\int_{\R^d}\phi(x+h) - \phi(x)g(dx)\right| \leq \kappa\norm{\phi}_{C^\alpha_b(\R^d)}|h|^a
    \end{align}
    Then $g$ has a density in $B^{a-\alpha}_{1,\infty}(\R^d)$ with $\norm{g}_{B^{a-\alpha}_{1,\infty}(\R^d)} \leq g(\R^d) + C_{d,a,\alpha}\kappa$.
\end{thm}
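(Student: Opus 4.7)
My plan is to reduce the conclusion, via duality, to a quantitative bound on translation increments of $g$, and then to bridge the gap between the hypothesis (which controls such increments when tested against $C^\alpha_b$ functions) and the desired total-variation control (which requires the same against arbitrary bounded measurable functions, at the price of $\alpha$ in the regularity index). Setting $s := a - \alpha \in (0,1)$, I would use the first-difference characterization
\[
\norm{f}_{B^s_{1,\infty}(\R^d)} \simeq \norm{f}_{L^1(\R^d)} + \sup_{0 < |h| < 1} |h|^{-s}\,\norm{f(\cdot+h) - f(\cdot)}_{L^1(\R^d)},
\]
which extends by duality to finite measures: showing that $g$ has a density with $\norm{g}_{B^s_{1,\infty}} \leq g(\R^d) + C$ is equivalent to showing that for every $|h| < 1$ and every $\psi \in L^\infty$ with $\norm{\psi}_\infty \leq 1$,
\[
\left|\int_{\R^d} \bigl(\psi(x-h) - \psi(x)\bigr)\,g(dx)\right| \leq C\,|h|^s.
\]
Existence of the density itself can then be recovered by checking that the smooth mollifications $\rho_\varepsilon * g$ form a Cauchy sequence in $L^1$ as $\varepsilon \downarrow 0$, leveraging the uniform Besov bound established in the argument.

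The first technical step is to smooth $\psi$. Pick a nonnegative, smooth, compactly-supported bump $\rho$ with $\int \rho = 1$, set $\rho_\varepsilon(x) = \varepsilon^{-d}\rho(x/\varepsilon)$, and let $\psi_\varepsilon := \psi * \rho_\varepsilon$. A routine interpolation between the sup bound $\norm{\psi_\varepsilon}_\infty \leq \norm{\psi}_\infty$ and the Lipschitz bound $\norm{\nabla \psi_\varepsilon}_\infty \leq \norm{\psi}_\infty \norm{\nabla \rho}_{L^1}/\varepsilon$ yields $\norm{\psi_\varepsilon}_{C^\alpha_b} \leq C_\rho\,\varepsilon^{-\alpha}$. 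Applying the hypothesis to $\psi_\varepsilon$ then gives
\[
\left|\int \bigl(\psi_\varepsilon(x-h) - \psi_\varepsilon(x)\bigr)\,g(dx)\right| \leq \kappa\,\norm{\psi_\varepsilon}_{C^\alpha_b}\,|h|^a \leq C_\rho\,\kappa\,\varepsilon^{-\alpha}\,|h|^a,
\]
and the natural choice $\varepsilon = |h|$ produces the desired $C_\rho\,\kappa\,|h|^{a-\alpha}$ bound for this mollified piece.

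The main obstacle is controlling the remainder
\[
\int \bigl((\psi - \psi_\varepsilon)(x-h) - (\psi - \psi_\varepsilon)(x)\bigr)\,g(dx),
\]
since $\psi - \psi_\varepsilon$ is merely $L^\infty$, the hypothesis does not apply to it directly, and the trivial bound $\norm{\psi - \psi_\varepsilon}_\infty \leq 2\norm{\psi}_\infty$ gives only a quantity independent of $h$. The Debussche--Romito resolution is a dyadic decomposition $\psi - \psi_\varepsilon = \sum_{k \geq 0}\bigl(\psi_{\varepsilon 2^{-k-1}} - \psi_{\varepsilon 2^{-k}}\bigr)$ in which each telescoping block lies in $C^\alpha_b$ at the finer scale $\varepsilon 2^{-k}$; one then balances the hypothesis against a Bernstein-type bound exploiting the band-limited character of the difference kernels $\rho_{\varepsilon 2^{-k-1}} - \rho_{\varepsilon 2^{-k}}$ (which are mean-zero), organising the resulting sums so that their total contribution remains of the desired order $|h|^{a-\alpha}$. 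An equivalent and perhaps more transparent route is a Littlewood--Paley decomposition of $g$ itself: one bounds $\norm{\chi_j * g}_{L^1}$ by duality, uses Bernstein's inequality to place $\tilde\chi_j * \psi$ in $C^\alpha_b$ with norm $\lesssim 2^{j\alpha}\norm{\psi}_\infty$, and exploits the mean-zero property of $\chi_j$ for $j \geq 0$ to rewrite pairings as weighted averages of translation differences to which the hypothesis applies block-by-block, summing the dyadic estimates against the weight $2^{-j(a-\alpha)}$. In either formulation, the $\alpha$-loss in the Besov index is precisely the price paid for converting H\"older-duality estimates into total-variation estimates on the translation increment, and the constant $C_{d,a,\alpha}$ degenerates as $\alpha \uparrow a$ or $\alpha \downarrow 0$.
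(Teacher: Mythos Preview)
The paper does not prove this result itself: it identifies the statement as Lemma~2.1 of Fournier \cite{four}, remarks that the proof is ``a straightforward analytical argument, avoiding the need for advanced techniques in functional analysis,'' and refers the reader there. So the comparison is really with Fournier's argument.

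Your outline is correct in spirit, and the Littlewood--Paley description at the end is essentially right: once you factor $\chi_j=\tilde\chi_j*\chi_j$, the mean-zero property of one factor lets you rewrite $\langle\psi,\chi_j*g\rangle$ as an average over $w$ of pairings $\langle(\check{\tilde\chi}_j*\psi)(\cdot+2^{-j}w)-\check{\tilde\chi}_j*\psi,\,g\rangle$, to which the hypothesis applies with $\|\check{\tilde\chi}_j*\psi\|_{C^\alpha_b}\lesssim 2^{j\alpha}$ and $|2^{-j}w|^a\lesssim 2^{-ja}$, giving $\|\Delta_j g\|_{L^1}\lesssim\kappa\,2^{-j(a-\alpha)}$. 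This is a valid proof.

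Your first description of the remainder, however, is where the sketch is loosest. Applying the hypothesis to each telescoping block $\phi_k=\psi_{\e 2^{-k-1}}-\psi_{\e 2^{-k}}$ with $\|\phi_k\|_{C^\alpha_b}\lesssim(\e 2^{-k})^{-\alpha}$ yields terms $\kappa(\e 2^{-k})^{-\alpha}|h|^a$ whose sum over $k$ diverges, and there is no Bernstein-type bound on the $\psi$-side that produces decay in $k$ when $\psi$ is merely bounded. What actually closes the loop is dualizing: $\langle\phi_k(\cdot-h)-\phi_k,g\rangle=\langle\psi(\cdot-h)-\psi,\,g_{\e 2^{-k-1}}-g_{\e 2^{-k}}\rangle$, so the remainder reduces to the Cauchy estimate $\|g_\delta-g_{\delta/2}\|_{L^1}\lesssim\kappa\,\delta^{a-\alpha}$. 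Fournier obtains this directly and elementarily from Step~1 via $\|g_\delta*\rho_\eta-g_\delta\|_{L^1}\le\int\rho_\eta(z)\|g_\delta(\cdot-z)-g_\delta\|_{L^1}\,dz\lesssim\kappa\,\delta^{-\alpha}\eta^{a}$, then triangulating through $g_\delta*\rho_{2\delta}$. This route avoids Littlewood--Paley machinery altogether, which is presumably what the paper means by ``avoiding advanced techniques.'' Your Littlewood--Paley argument and Fournier's mollifier argument are equivalent after duality; the latter is just more self-contained.
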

This theorem is Lemma 2.1 in \cite{four}, where the reader is referred to for the proof. It is a straightforward analytical argument, avoiding the need for advanced techniques in functional analysis.\\
In addition to the existence of a density function at each fixed time $t$, we establish Besov norm estimates related to it. It should be noted that different values of $\gamma$ will yield different estimates. Further, each estimate will depend on the finite time $t$. \\
The difficulty in immediate application of similar methods to the spatially homogeneous Boltzmann equation is in the transport term, $v\cdot \nabla_r f_t$ which does not appear in the spatially homogeneous case. However, the transport is not a hindrance in the stochastic interpretation under expectation as one can examine in Definition \ref{Boltzmann-Enskog process}. Still, this will be addressed when establishing deterministic results needed to properly adapt Theorem \ref{deb-rom-exist} for the distribution of the Boltzmann-Enskog process.\\ 
 In \cite{four} N. Fournier proved that a weak solution of a spatially homogeneous Boltzmann equation belongs to some Besov space.
The methods in \cite{four} involved the examination of the spatially homogeneous Boltzmann equation under the case of \textit{hard potentials} where $\nu \in (0,1)$ and $\gamma \in (0,1)$ and \textit{moderately soft potentials} where $\nu \in (0,1)$ and $\gamma \in (-1,0)$. The case of $\gamma \in [1,2)$ is unstudied, but is not immediate under the taken methods as it requires the finiteness of moments of order higher than two, which is not immediate if one solely has the conservation of energy.\\
The existence of density for the Boltzmann-Enskog process is not able to be obtained without an initial hypothesis of higher moments of the distribution, as the conservation of momentum will only guarantee finite moments up to the second moment. The work of the current paper will utilize the established existence of an Boltzmann-Enskog process with higher initial moments \cite{sund} to allow a similar analysis of differences under expectation. While some estimates established in later sections may remain independent of time depending on the specific $\gamma$ taken, this is not stressed often as it is not a concern. The estimates of the paper will only stress a need for remaining finite on a compact time interval.\\
The moment estimates of the Boltzmann-Enskog process are discussed below. Before stating them, we note that they will depend on a fixed time and the initial distribution.
\\ 

\noindent \textbf{Moments of the Boltzmann-Enskog process}\\
The existence result in \cite{sund} required an initial higher order estimate than the usual conservation laws (\ref{conv-laws}), which in turn gave a better regularity of the solution. Their method involved a sequence of empirical measures, establishing tightness and showcasing the limit describes the law of an Boltzmann-Enskog process. This process established that the Boltzmann-Enskog process satisfies moment estimates of order higher than two, dependent on the $\gamma$ connected with the process. The following result is obtained from the proof of Theorem 3.2 and Remark 3.4 in \cite{sund}.
\begin{thm}
\label{sund-estimate-thm}
 Suppose that $\gamma \in (0,2)$ and let $\mu_0 \in \mathcal{P}(\R^{6})$ be such that there exists $\e > 0$ with
 \begin{align}
  \int \limits_{\R^{6}}\left( |r|^{\e} + |v|^{\frac{2}{2-\gamma}\max\{4,1+2\gamma\}} \right)d\mu_0(r,v) < \infty.
 \end{align}
 Then there exists an Boltzmann-Enskog process $(R_\cdot,V_\cdot)$ with distribution $\mu_\cdot$. Moreover, for any $p \geq 4$, there exists a constant $C_p > 0$ such that
 \begin{align}
  E\left( |V_t|^p \right) \leq C_{p}\left( 1 + \int \limits_{\R^{6}}|v|^{\frac{2p}{2-\gamma}}d\mu_0(r,v) \right)  t^{\frac{2p}{2-\gamma}}, t \geq 0,
 \end{align}
 and if $p + \gamma \geq 4$, then also
 \begin{align}
  E\left(\sup \limits_{s \in [0,t]} |V_s|^p\right) \leq C_p \left(1 + \int \limits_{\R^{6}}|v|^{\frac{2p + \gamma}{2-\gamma}}d\mu_0(r,v)\right)\left(1 +  t^{\frac{2p+2}{2-\gamma}}\right), t \geq 0,
 \end{align}
 provided the right-hand sides are finite. Further, given a finite $T > 0$, for $t \in [0,T]$,
 \begin{align}
     \label{gamma-0/2-estimate}
     E|V_t|^{\frac{8}{2-\gamma}} < M_{\gamma,t}
 \end{align}
 for any $t \geq 0$, where $M_{\gamma,t}$ is a finite constant dependent on time and $\gamma$.
\end{thm}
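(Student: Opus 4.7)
The plan is to combine a Poisson-driven SDE construction with It\^o-type moment estimates. For existence, one constructs approximating processes $(R^n,V^n)$---for instance by truncating $\sigma$, or by replacing the law $\mu_s$ in the compensator with a smoothed empirical measure as in \cite{sund}---each of which is a well-posed Poisson-driven jump system. One then derives uniform-in-$n$ moment bounds by the calculation below, uses the Aldous criterion (the jump sizes are controlled by $\tfrac12\theta|V^n_{s-}-u_s|$ via \eqref{alpha-lip}) to establish tightness in the Skorokhod space, extracts a weakly convergent subsequence, and identifies the limit as a solution of the McKean--Vlasov system in Definition~\ref{Boltzmann-Enskog process}.

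For the moment bounds themselves, I would apply It\^o's formula to $|V_t|^p$, take expectations, and use the compensator of $N$ to obtain
\begin{align*}
E|V_t|^p = E|V_0|^p + E\!\int_0^t\!\!\int_{\Xi\times\R^6}\!\!\bigl(|V_s+\alpha(V_s,u,\theta,\xi)|^p - |V_s|^p\bigr)\sigma(|V_s-u|)\beta(R_s-q)\,Q(d\theta)\,d\xi\,\mu_s(dq,du)\,ds.
\end{align*}
A Taylor expansion gives $|V_s+\alpha|^p - |V_s|^p \le p|V_s|^{p-2}(V_s,\alpha) + c_p|\alpha|^p$. The part of $\alpha$ proportional to $\sin\theta\,\Gamma(V_s-u,\xi)$ integrates to zero on $S^1$, so only the $\sin^2(\theta/2)(u-V_s)$ component survives in the linear term. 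Combining this cancellation with the bound $|\alpha|\le\tfrac12\theta|V_s-u|$ from \eqref{alpha-lip}, the growth $\sigma(|V_s-u|)\le c_\sigma(1+|V_s-u|^\gamma)$ from \eqref{sigma-gamma}, and the integrability $\int_0^\pi\theta^2 b(\theta)\,d\theta<\infty$ (which holds since $\nu\in(0,1)$), one arrives at a differential inequality of the schematic form $\frac{d}{dt}E|V_t|^p \le C_p\bigl(1 + E|V_t|^{p+\gamma}\bigr)$.

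Because $p+\gamma>p$, Gronwall's lemma does not close this directly. The decisive step is a Young-type interpolation balancing $E|V_t|^{p+\gamma}$ against a higher moment $E|V_t|^{2p/(2-\gamma)}$ and an appropriate power of $t$; the exponent $2p/(2-\gamma)$ arises precisely as the scaling fixed point of the interpolation, which is why the hypothesis on $\mu_0$ involves exactly this order. Iterating (or, equivalently, establishing the estimate for the largest $p$ first and cascading downward) yields the polynomial-in-$t$ bound of the theorem. For the supremum version, I would split the Poisson integral in the definition of $V_t$ into its compensated-martingale part plus a predictable drift, apply the Burkholder--Davis--Gundy inequality to the martingale part, and feed in the moment bound just obtained to control the resulting quadratic-variation integrand; the slightly stronger moment assumption $p+\gamma\ge 4$ in this part is what makes the relevant $L^{p/2}$ estimate on the jump intensity finite.

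The main obstacle is closing the superlinear moment inequality. It is this step that forces the initial distribution to have a finite moment of order $2p/(2-\gamma)$ rather than merely order $p$, and it demands a sharp Young constant in order to deliver the precise exponent $\tfrac{2p}{2-\gamma}$ in $t$ displayed in the theorem. The restriction $\gamma<2$ is dictated by the same mechanism, since the order of the initial moment required diverges as $\gamma\uparrow 2$.
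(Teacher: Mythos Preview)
The paper does not prove this theorem; it is quoted from \cite{sund} (``The following result is obtained from the proof of Theorem~3.2 and Remark~3.4 in \cite{sund}''), so there is no in-paper proof to compare against. What the paper does say about the argument in \cite{sund} --- a sequence of approximating empirical measures, tightness, and identification of the limit as a Boltzmann--Enskog process, together with moment estimates obtained along the way --- matches the architecture of your proposal.

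Your sketch is in the right spirit, but two points would need sharpening before it becomes a proof. First, your Taylor remainder $|V_s+\alpha|^p-|V_s|^p\le p|V_s|^{p-2}\langle V_s,\alpha\rangle+c_p|\alpha|^p$ drops the mixed second-order term; for $p\ge 2$ the correct bound is of the form $p|V_s|^{p-2}\langle V_s,\alpha\rangle+C_p(|V_s|^{p-2}|\alpha|^2+|\alpha|^p)$, and the $|V_s|^{p-2}|\alpha|^2$ term is what actually drives the $|V_s|^{p+\gamma}$ growth after inserting $|\alpha|\le\tfrac12\theta|V_s-u|$ and $\sigma\le c_\sigma(1+|\cdot|^\gamma)$. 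Second, and more seriously, you describe the closure as a ``Young-type interpolation balancing $E|V_t|^{p+\gamma}$ against $E|V_t|^{2p/(2-\gamma)}$,'' but this by itself does not close the inequality: it replaces one unknown supercritical moment by an even higher one. In \cite{sund} the closure is obtained by working with the full hierarchy of moments for the approximations (which are a priori finite at each level $n$) and propagating a uniform-in-$n$ polynomial bound, or equivalently by an inductive/iterative argument in $p$; your sketch gestures at this (``establishing the estimate for the largest $p$ first and cascading downward'') but the mechanism by which the \emph{top} moment is controlled is exactly the step that is missing.
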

When utilizing the above estimates, the symbol $C_t$ or $K_t$ is used to denote a constant in the estimate which depends on time. While there is an assumption of initial higher order estimates, the conservation laws still give additional, useful information. Conservation of energy yields a bound $E|V_t|^p \leq 1 + E|V_0|^2$ for all $0 < p < 2$ that is independent of time.
However, our results are not hindered by estimates that depend on time. We simply note that some moments can be bounded independent of time.
\section{Approximating Process}
For $\varepsilon > 0$, we construct an additive process $(R_t^\varepsilon,V_t^\varepsilon)$ that closely approximates the Boltzmann-Enskog process $(R_t,V_t)$, converging to it in mean as $\varepsilon \to 0$. The motivation being to obtain estimates leading to (\ref{deb-rom-eqn}) for the distribution of the Boltzmann-Enskog process though use of the properties of additive processes and their characteristic functions. Recall $\chi$ is a probability space with measure $\eta$ and $\Xi = [0,\pi) \times S^{1}$. We denote
\begin{equation*}
    \hat{\alpha}(v,r,u,q,\theta,\xi,z) = \alpha(v,u,\theta,\xi)1_{[0,\sigma(v - u)\beta(r - q)]}(z)
\end{equation*}
where $\alpha$ is defined in (\ref{alpha-defn}) and $\xi_0 = \xi_0(V_s -u_s,V_{t-\varepsilon}-u_s,\xi)$ is introduced in Lemma \ref{xi-lemma}.
\begin{prop}
\label{approx-def}
Let $(R_t,V_t)$ be an Boltzmann-Enskog process, satisfying the hypotheses of Theorem \ref{sund-estimate-thm}. Define an approximating process $(R_t^\varepsilon,V_t^\varepsilon)$
\begin{equation*}
    R_t^\varepsilon = R_{t-\varepsilon} + \int_{t - \varepsilon}^tV_s^\varepsilon\,ds
\end{equation*}
\begin{equation*}
    V_t^\varepsilon = V_{t-\varepsilon} + \int_{t-\varepsilon}^t\int_{\Xi\times\chi\times\mathbb{R}_+}\hat{\alpha}(V_{t-\varepsilon},R_{t-\varepsilon},u_s(\eta),q_s(\eta),\theta,\xi + \xi_0,z)dN(s,\theta,\xi,\eta,z)
\end{equation*}
Then for finite $t$, $(R_t^\varepsilon,V_t^\varepsilon) \to (R_t,V_t)$ in $L^1(P)$ as $\varepsilon \to 0$. Further, for some positive constant $K_t$ dependent on $t$ and strictly positive power $\kappa_\gamma > 1$,
\begin{align}
    \label{kappa-bound}
    E|V_t - V^\varepsilon_t| \leq K_t\varepsilon^{\kappa_\gamma}
\end{align}
\end{prop}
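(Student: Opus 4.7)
The plan is to compare the two velocity processes directly via their common stochastic-integral representations. Since $V_t$ and $V_t^\varepsilon$ agree at time $t-\varepsilon$, we can write
\begin{align*}
V_t - V_t^\varepsilon = \int_{t-\varepsilon}^t\int_{\Xi\times\chi\times\R_+}\bigl[\hat\alpha(V_{s-},R_s,u_s,q_s,\theta,\xi,z) - \hat\alpha(V_{t-\varepsilon},R_{t-\varepsilon},u_s,q_s,\theta,\xi+\xi_0,z)\bigr]dN.
\end{align*}
Taking absolute values inside the integrand, then expectation, and exploiting that for nonnegative integrands $dN$ and its compensator $d\hat N = ds\,Q(d\theta)\,d\xi\,d\eta\,dz$ give the same expectation, reduces the estimate to a deterministic bound on $\int_{t-\varepsilon}^t E\!\int|\hat\alpha_1 - \hat\alpha_2|\,d\hat N$.

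The main technical step is to clean up $|\hat\alpha_1 - \hat\alpha_2|$ after integrating the variable $z$. Writing $A_1,A_2$ for the two upper limits in the indicators and $\alpha_1,\alpha_2$ for the two $\alpha$-values, an elementary splitting of $[0,\infty)$ at $A_1\wedge A_2$ yields
\begin{align*}
\int_0^\infty\!|\alpha_1\mathbf{1}_{[0,A_1]} - \alpha_2\mathbf{1}_{[0,A_2]}|\,dz \leq (A_1\wedge A_2)|\alpha_1 - \alpha_2| + |A_1-A_2|\bigl(|\alpha_1|\vee|\alpha_2|\bigr),
\end{align*}
cleanly separating the mismatch in jump amplitudes from the mismatch in jump rates. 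For the amplitude mismatch, Lemma \ref{xi-lemma}, applied with $u_s$ as the common second argument (which dictates the specific choice of $\xi_0$ in the proposition's definition), controls $|\alpha_1 - \alpha_2|$ by a $\theta$-linear multiple of $|V_{s-}-V_{t-\varepsilon}|$, while (\ref{sigma-gamma}) gives $A_1\wedge A_2 \leq \norm{\beta}_\infty c_\sigma(1+|V_{s-}-u_s|^\gamma)$. For the rate mismatch, (\ref{alpha-lip}) provides $|\alpha_i|\leq\tfrac{\theta}{2}(|V_{s-}-u_s|\vee|V_{t-\varepsilon}-u_s|)$, while the Lipschitz hypothesis (\ref{sigma-lip}) on $\sigma$ together with the smoothness of $\beta$ yields
\begin{align*}
|A_1-A_2|\leq \norm{\beta}_\infty c_\sigma\bigl||V_{s-}-u_s|^\gamma-|V_{t-\varepsilon}-u_s|^\gamma\bigr| + \sigma(|V_{t-\varepsilon}-u_s|)\norm{\beta'}_\infty|R_s-R_{t-\varepsilon}|.
\end{align*}
All $\theta$-integrals are finite because $\int_0^\pi\theta\,Q(d\theta)<\infty$ by (\ref{defn-of-Q-measure}) with $\nu<1$.

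To close, one couples the above to two auxiliary estimates of the \emph{same flavour}: $E|V_s-V_{t-\varepsilon}|\leq K_t(s-(t-\varepsilon))$, obtained by running the compensator argument on the single equation for $V$ on $[t-\varepsilon,s]$ using (\ref{alpha-lip}), and $E|R_s-R_{t-\varepsilon}|\leq \varepsilon\,E\!\sup_{r\in[t-\varepsilon,t]}|V_r|\leq K_t\varepsilon$. The moment factors $E\int_\chi|V_r-u_s(\eta)|^p d\eta$ reduce to moments of $V$ since $u_s$ has law $\mathcal{L}(V_s)$, and are finite on compact time intervals by Theorem \ref{sund-estimate-thm}. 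Inserting these bounds into the two summands of the decomposition, each integrand becomes of order $(s-(t-\varepsilon))^{\gamma\wedge 1}$ times a finite moment; integrating $s$ over $[t-\varepsilon,t]$ produces
\begin{align*}
E|V_t-V_t^\varepsilon|\leq K_t\,\varepsilon^{1+(\gamma\wedge 1)},
\end{align*}
which is (\ref{kappa-bound}) with $\kappa_\gamma=1+(\gamma\wedge 1)>1$. The convergence of $R_t^\varepsilon$ then follows immediately from $R_t-R_t^\varepsilon=\int_{t-\varepsilon}^t(V_s-V_s^\varepsilon)ds$ and the parallel bound for $V_s-V_s^\varepsilon$ on $s\in[t-\varepsilon,t]$.

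The principal technical obstacle is the regime $\gamma\in[1,2)$, where $x\mapsto x^\gamma$ is not sub-additive and the naive bound $\bigl||a|^\gamma-|b|^\gamma\bigr|\leq|a-b|^\gamma$ fails; one must instead use the mean-value form $\bigl||a|^\gamma-|b|^\gamma\bigr|\leq\gamma|a-b|(|a|\vee|b|)^{\gamma-1}$ and absorb the extra $(\gamma-1)$-power of velocity into the higher moment $E|V_r|^{2p/(2-\gamma)}$ from Theorem \ref{sund-estimate-thm}. This is precisely the place where the strong moment hypothesis in the definition of the Boltzmann-Enskog process is genuinely used to close the estimate.
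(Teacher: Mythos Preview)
Your overall strategy coincides with the paper's: write $V_t-V_t^\varepsilon$ as a single stochastic integral over $[t-\varepsilon,t]$, pass to the compensator, integrate out $z$ to split the integrand into an amplitude mismatch (handled by Lemma~\ref{xi-lemma}) and a rate mismatch (handled by the Lipschitz properties of $\sigma$ and $\beta$), and feed in a time-Lipschitz estimate on $V$ and $R$. The paper's three-term decomposition $B_1+B_2+B_3$ using $\wedge$ and positive parts is equivalent to your two-term split.

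The gap is in your closing step. You assert that ``each integrand becomes of order $(s-(t-\varepsilon))^{\gamma\wedge 1}$ times a finite moment'', but after your decomposition the integrand is a \emph{product} such as $(1+|V_{s-}-u_s|^\gamma)\,|V_{s-}-V_{t-\varepsilon}|$ or $|V_{s-}-V_{t-\varepsilon}|^\gamma\,|V_{s-}-u_s|$. Expectation of a product does not factor, and the unbounded second factor prevents you from simply pulling out $E|V_{s-}-V_{t-\varepsilon}|$ or its $\gamma$-power. A H\"older inequality is mandatory here, and with only the first-moment auxiliary $E|V_s-V_{t-\varepsilon}|\le K_t(s-(t-\varepsilon))$ you have recorded, H\"older with exponent $p>1$ yields at best $(s-(t-\varepsilon))^{1/p}$ from that factor, not $(s-(t-\varepsilon))$. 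The paper closes this by first establishing via It\^o's formula the stronger auxiliary $E|V_s-V_{t-\varepsilon}|^{1+\gamma}\le K_t(s-(t-\varepsilon))$ (and analogously $E|R_s-R_{t-\varepsilon}|^{c}\le K_t\varepsilon^{(2c-1)/c}$ for the position term), then applying H\"older with $p=(1+\gamma)/\gamma$ or $p=1+\gamma$, obtaining the smaller but valid exponents $\kappa_\gamma=1+\tfrac{\gamma}{1+\gamma}$ for $\gamma\le 1$ and $\kappa_\gamma=1+\tfrac{1}{1+\gamma}$ for $\gamma>1$. Your claimed $\kappa_\gamma=1+(\gamma\wedge 1)$ is strictly sharper than what the paper proves and is not supported by your sketch. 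Since the proposition only demands some $\kappa_\gamma>1$, your outline does succeed once you insert the $(1+\gamma)$-moment auxiliary and carry out the H\"older step explicitly; but as written, the product-separation is the missing piece.
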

\begin{proof}\,\\
    \textit{Step 1.} First, Lipschitz continuity under expectation with respect to time for the stochastic process $V$ is established. Namely, given $0 \leq s < t$,
    \begin{equation}
    \label{lipschitz-in-time}
        \begin{split}
        E|V_t - V_s| \leq K_{t}(t - s)
       \end{split}
    \end{equation}
    for some constant $K_t > 0$ that remains finite as $s \to t$. We observe $K_t$ can be made independent of time provided $\gamma \in (0,1]$. \\ For $\gamma \in (0,2)$, examine
    \begin{equation*}
        \begin{split}
            E|V_t - V_u| & \leq E\left[\int_u^t\int_{\Xi\times\chi\times\mathbb{R}_+}|\hat{\alpha}(V_{s-},R_{s},u_s(\eta),q_s(\eta),\theta,\xi,z)|dN(s,\theta,\xi,\eta,z)\right]\\
            & = E\left[\int_u^t\int_{\Xi\times\chi\times\mathbb{R}_+}|\hat{\alpha}(V_{s-},R_{s},u_s,q_s,\theta,\xi,z)|dzQ(d\theta)d\xi d\eta ds\right]\\
            & = E\left[\int_u^t\int_{\Xi\times\chi}|\alpha(V_{s-},u_s,\theta,\xi)|\sigma(V_{s-} - u_s)\beta(R_s - q_s)Q(d\theta)d\xi d\eta ds\right]\\
            & \leq E\left[\int_u^t\int_{\Xi\times\chi}C\theta(|V_{s-} - u_s| + |V_{s-} - u_s|^{1+\gamma})\beta(R_s - q_s)Q(d\theta)d\xi d\eta ds\right]\\
            & \leq CE\left[\int_u^t\int_{\chi}|V_{s-} - u_s| + |V_{s-} - u_s|^{1 + \gamma} d\eta ds\right]\\
            \end{split}
    \end{equation*}
    using (\ref{alpha-lip}), (\ref{sigma-gamma}), bounding $\beta$, and the fact that $\int_0^\pi\theta Q(d\theta) < \infty$. 
    \begin{equation*}
        \begin{split}
            &CE\left[\int_u^t\int_{\chi}|V_{s} - u_s| + |V_{s} - u_s|^{1 + \gamma} d\eta ds\right]\\
            & = C\int_u^t\int_{\chi}E[|V_{s} - u_s| + |V_{s} - u_s|^{1 + \gamma}] d\eta ds\\
            & \leq C\int_u^t\int_{\chi}[E|V_{s}|^{1+\gamma} + E|u_s|^{1 + \gamma} + E|V_{s}| + E|u_s|] d\eta ds\\
            & \leq C_{t}(t - u)
        \end{split}
    \end{equation*}
    where we use $E|V_{s} - u_s|^{1 + \gamma} \leq 2^{\gamma}(E|V_{s}|^{1+\gamma} + E|u_s|^{1 + \gamma})$. The finite values of these moments gives the final bound. For $0 \leq u < t$,
    \begin{equation*}
        E|R_t - R_u| \leq \int_u^tE|V_s|\,ds \leq M(t - u)
    \end{equation*}
    Next we show that
    \begin{equation}
    \label{vt - vu ineq}
        E|V_t - V_u|^{1 + \gamma} \leq K(t - u)
    \end{equation}
    for some positive $K$. In the following lines, we will use the abbreviation $\hat{a}$ to shorten $\hat{\alpha}(V_s,R_s,u_s,q_s,\theta,\xi,z)$ for the sake of legibility. Applying the It\^{o} formula to $|V_t - V_u|^{1 + \gamma}$ yields
    \begin{equation*}
        |V_t - V_u|^{1 + \gamma} = \int_u^t\int_{\chi\times\Xi\times\mathbb{R}_+}(|V_s + \hat{a}|^{1 + \gamma} - |V_s|^{1 + \gamma})dN
    \end{equation*}
    In some calculations, we will utilize the following inequalities. For $a,b >0$, there exists constants $0 < c_{a,b} < C_{a,b}$ such that for all $x,y > 0$:
    \begin{equation}
    \label{ab-ineq}
        c_{a,b}|x^{a+b} - y^{a+b}| \leq (x^{a} + y^{a})|x^b - y^b| \leq C_{a,b}|x^{a+b}-y^{a+b}|
    \end{equation}
    Using it for the specific case $a = 1 + \gamma$ and $b = 1$, we obtain 
    \begin{align*}
        & E\left[\int_u^t\int_{\chi\times\Xi\times\mathbb{R}_+}|V_s + \hat{\alpha}|^{1 + \gamma} - |V_s|^{1 + \gamma}dN\right] \leq K_{\gamma}E\left[\int_u^t\int_{\chi\times\Xi\times\mathbb{R}_+}(|V_s|^\gamma + |\hat{\alpha}|^\gamma)|\hat{\alpha}|d\hat{N}\right]\\
        & = \int_u^t\int_{\chi\times\Xi\times\mathbb{R}_+}K_\gamma E[(|V_s|^\gamma + |\hat{\alpha}|^\gamma)|\hat{\alpha}|]d\hat{N} 
    \end{align*}
    Next,
    \begin{equation*}
        \begin{split}
            & \int_u^t\int_{\chi\times\Xi\times\mathbb{R}_+}K_{\gamma}E[(|V_s|^\gamma + |\hat{\alpha}|^\gamma)|\hat{\alpha}|]d\hat{N}\\
            & \leq \int_u^t\int_{\chi\times\Xi}K_{\gamma}\theta^{1 + \gamma} E[(|V_s|^\gamma + |V_s - u_s|^{\gamma})(|V_s - u_s| + |V_s - u_s|^{1 + \gamma})]Q(d\theta)d\xi d\eta ds\\
            & \leq \int_u^t\int_{\chi}K_{\gamma} \left(E[|V_s|^{1 + 2\gamma}] + E[|V_s|^\gamma|u_s|
            ^{1 + \gamma}] + E[|V_s - u_s|^{1 + 2\gamma}] + E[|V_s - u_s|^{1 + \gamma}]\right.\\
            & + \left. E|V_s|^{1 + \gamma} + E[|V_s|^\gamma|u_s|]\right) d\eta ds\\
            & \leq \int_u^t\int_{\chi}K_{\gamma} \left(E[|V_s|^{1 + 2\gamma}] + E[|V_s|^{2\gamma}]^{1/2}E[|u_s|
            ^{2 + 2\gamma}]^{1/2} + E[|V_s|^{1 + 2\gamma}] +E[|u_s|^{1 + 2\gamma}]\right.\\
            & \left. + E|V_s|^{1 + \gamma} + E|u_s|^{1 + \gamma} + E|V_s|^{1 + \gamma} + E[|V_s|^{2\gamma}]^{1/2}E[|u_s|^2]^{1/2}\right)d\eta ds
        \end{split}
    \end{equation*}
    using (\ref{alpha-lip}) and (\ref{sigma-gamma}). We then integrate with respect to $\xi$ and $\theta$, noting that\\ $ \int_0^\pi \theta^{1 + \gamma} Q(d\theta) < \infty$. We claim each term in the integrand is finite. Recall by (\ref{gamma-0/2-estimate}), for $\gamma \in (0,2)$,
    \begin{equation*}
        E[|V_t|^{\frac{8}{2 - \gamma}}] < M_{t,\gamma}
    \end{equation*}
    for each $t \geq 0$. Similarly, $\int_\chi|u_t|^{\frac{8}{2 - \gamma}}d\eta < \infty$. Notice that each moment above is less than $\frac{8}{2 - \gamma}$ for $\gamma \in (0,2)$. Therefore,
    \begin{equation*}
    \begin{split}
        & \int_{\chi}K \left(E[|V_s|^{1 + 2\gamma}] + E[|V_s|^{2\gamma}]^{1/2}E[|u_s|
            ^{2 + 2\gamma}]^{1/2} +E[|u_s|^{2 + 2\gamma}]\right.\\
            & \left. + E|V_s|^{1 + \gamma} + E|u_s|^{1 + \gamma} +  E[|V_s|^{2\gamma}]^{1/2}E[|u_s|^2]^{1/2}\right)d\eta \leq M_{s,\gamma}
    \end{split}
    \end{equation*}
    for some constant $M_{s,\gamma}$ dependent on $s$ and $\gamma$. Thus,
    \begin{equation}
    \label{vt-vu 1 + gamma}
        E[|V_t - V_u|^{1 + \gamma}] \leq K_{t,\gamma}(t - u)
    \end{equation}
    for some constant $K_{t,\gamma}$ that is finite for finite $t$ and $\gamma \in (0,2)$.\\
    Take $c_\gamma \in (1,8/(2 - \gamma))$. Then, $E|R_t - R_u|^{c_{\gamma}} \leq  K_{t}(t - u)$ for some finite $K_t$.
    Indeed,
    \begin{equation}
    \label{R-lip}
    \begin{split}
        E|R_t - R_u|^{c_\gamma} & \leq E\left[\left(\int_u^t|V_s|ds\right)^{c_\gamma}\right]\\
        & \leq E\left[(t - u)^{\frac{c_\gamma - 1}{c_\gamma}}\int_u^t|V_s|^{c_\gamma}ds\right]\\
        & = (t - u)^{\frac{c_\gamma - 1}{c_\gamma}}\int_u^tE|V_s|^{c_\gamma}ds \leq K_{t}(t - u)^\frac{2c_\gamma - 1}{c_\gamma}
    \end{split}
    \end{equation}
    using H\"{o}lder's Inequality with $p = c_\gamma$ and $q = \frac{c_\gamma}{c_\gamma - 1}$ and that $E|V_s|^{c_\gamma} < \infty$ for each $s \in [u,t]$.
    \textit{Step 2.} Now we establish $E|V_t - V_t^\varepsilon| \leq K_t \varepsilon^{\kappa}$ for $\varepsilon,K_t > 0$, some power $\kappa > 1$, and a finite time $t > 0$. By definition, 
    \begin{equation}
    \label{initial}
    \begin{split}
        &E|V_t - V_t^\varepsilon|\\ 
        & = E\left|\int_{t-\varepsilon}^{t}\int_{\Xi\times\chi\times\mathbb{R}_+}\hat{\alpha}(V_{s-},R_s,u_s,q_s,\theta,\xi,z) - \hat{\alpha}(V_{t-\varepsilon},R_{t-\varepsilon},u_s,q_s,\theta,\xi + \xi_0,z)dN\right|
        \end{split}
    \end{equation}
    Bringing the absolute value inside the integral then integrating in $z$ gives that (\ref{initial}) is less than or equal to
    \begin{equation*}
    \begin{split}
        & \int_{t-\varepsilon}^t\int_{\Xi\times\chi}E[|\alpha(V_s,u_s,\theta,\xi)\sigma(V_s - u_s)\beta(R_s - q_s)\\
        & - \alpha(V_{t-\varepsilon},u_s,\theta,\xi + \xi_0)\sigma(V_{t-\varepsilon} - u_s)\beta(R_{t-\varepsilon} - q_s)|Q(d\theta)d\xi d\eta ds]
    \end{split}
    \end{equation*}
    which is less than or equal to 
    \begin{equation*}
        \int_{t-\varepsilon}^tE[B_1(s) + B_2(s) + B_3(s)]ds
    \end{equation*}
    where 
    \begin{equation*}
    \begin{split}
        & B_1(s) = \int_{\Xi\times\chi}[\sigma(V_s - u_s)\beta(R_s - q_s) \wedge \sigma(V_{t - \varepsilon} - u_s)\beta(R_{t- \varepsilon} - q_s)]\\
        & \times |\alpha(V_s,u_s,\theta,\xi) - \alpha(V_{t-\varepsilon},u_s,\theta,\xi + \xi_0)|d\eta d\xi Q(d\theta)
    \end{split}
    \end{equation*}
    \begin{equation*}
    \begin{split}
        & B_2(s) = \int_{\Xi\times\chi}[\sigma(V_s - u_s)\beta(R_s - q_s) - \sigma(V_{t - \varepsilon} - u_s)\beta(R_{t- \varepsilon} - q_s)]_+\\
        & \times|\alpha(V_s,u_s,\theta,\xi)|d\eta d\xi  Q(d\theta)
    \end{split}
    \end{equation*}
    \begin{equation*}
    \begin{split}
        & B_3(s) = \int_{\Xi\times\chi}[\sigma(V_{t-\varepsilon}- u_s)\beta(R_{t - \varepsilon} - q_s) - \sigma(V_{s} - u_s)\beta(R_{s} - q_s)]_+\\
        & \times |\alpha(V_{t - \varepsilon},u_s,\theta,\xi + \xi_0)|d\eta d\xi  Q(d\theta)
    \end{split}
    \end{equation*}
    Using the boundedness of $\beta$, (\ref{sigma-lip}) and (\ref{alph-cont}), there is the following bound for $B_1(s):$
    \begin{equation*}
        \begin{split}
            B_1(s) & \leq \int_{\Xi\times\chi}[\sigma(V_s - u_s) \wedge \sigma(V_{t - \varepsilon} - u_s)]\\
        & \times |\alpha(V_s,u_s,\theta,\xi) - \alpha(V_{t-\varepsilon},u_s,\theta,\xi + \xi_0)|d\eta d\xi Q(d\theta)\\
        & \leq 2\int_{\Xi\times\chi}[\sigma(V_s - u_s) \wedge \sigma(V_{t - \varepsilon} - u_s)] \theta|V_s - V_{t-\varepsilon}| d\eta d\xi Q(d\theta)
        \end{split}
    \end{equation*}
    Noting $\int_0^\pi \theta Q(d\theta) < \infty$, then using (\ref{sigma-gamma}) establishes
    \begin{equation*}
        \begin{split}
            & 2\int_{\Xi\times\chi}[\sigma(V_s - u_s) \wedge \sigma(V_{t - \varepsilon} - u_s)] \theta|V_s - V_{t-\varepsilon}| d\eta d\xi Q(d\theta)\\
            & = 2C_1\int_{\chi}[\sigma(V_s - u_s) \wedge \sigma(V_{t - \varepsilon} - u_s)]|V_s - V_{t-\varepsilon}| d\eta\\
            & \leq 2^{\gamma + 1} C_1\int_{\chi}[(1 + |V_s - u_s|^\gamma) \wedge (1 + |V_{t-\varepsilon} - u_s|^\gamma)]|V_s - V_{t-\varepsilon}| d\eta\\
        \end{split}
    \end{equation*}
    where $C_1 := \int_\Xi \theta Q(d\theta) d\xi$. Call this final upper bound as $A_1^{\eta}(s)$. For $B_2(s)$ and $B_3(s)$, we use (\ref{alpha-lip}), then integrate in $\theta$ and $\xi$ to give
    \begin{equation*}
        B_2(s) \leq C_1\int_{\chi}[\sigma(V_s - u_s)\beta(R_s - q_s) - \sigma(V_{t - \varepsilon} - u_s)\beta(R_{t- \varepsilon} - q_s)]_+|V_s - u_s|d\eta
    \end{equation*} 
    \begin{equation*}
        B_3(s) \leq C_1\int_\chi [\sigma(V_{t-\varepsilon}- u_s)\beta(R_{t - \varepsilon} - q_s) - \sigma(V_{s} - u_s)\beta(R_{s} - q_s)]_+|V_{t-\varepsilon} - u_s|d\eta
    \end{equation*}
    and call these upper bounds $A_2^\eta(s)$ and $A_3^\eta(s)$ respectively. Thus,
    \begin{equation*}
        E|V_t - V_t^\varepsilon| \leq C\int_{t - \varepsilon}^tE[A_1^\eta(s) + A_2^\eta(s) + A_3^\eta(s)]ds
    \end{equation*}
    where $C = 2C_1$.\\
    \textit{Step 3.} We now show that the expectation of $A^\eta_i$ is bounded above by some constant dependent on $\varepsilon$. This is first established for the $\gamma \in (0,1]$ case. For some constant $K_1$,
    \begin{equation*}
        A_1^\eta(s) \leq K_1\int_\chi (1 + |u_s|^\gamma + |V_s|^\gamma + |V_{t - \varepsilon}|^\gamma)|V_s - V_{t - \varepsilon}|d\eta
    \end{equation*}
    Now examine $A^\eta_2 + A^\eta_3$. First note
    \begin{equation*}
        \begin{split}
        & A_2^\eta(s) + A_3^\eta(s) \leq K'\int_{\chi}|\sigma(V_s - u_s)\beta(R_s - q_s) - \sigma(V_{t - \varepsilon} - u_s)\beta(R_{t- \varepsilon} - q_s)|\\
        & \times [|V_s - u_s| + |V_{t - \varepsilon} - u_s|]d\eta
        \end{split}    
    \end{equation*}
    and then examine $|\sigma(V_s - u_s)\beta(R_s - q_s) - \sigma(V_{t - \varepsilon} - u_s)\beta(R_{t- \varepsilon} - q_s)|$. Initially,
    \begin{equation*}
        \begin{split}
            & |\sigma(V_s - u_s)\beta(R_s - q_s) - \sigma(V_{t - \varepsilon} - u_s)\beta(R_{t- \varepsilon} - q_s)|\\
            & \leq \beta(R_s - q_s)|\sigma(V_s - u_s) - \sigma(V_{t - \varepsilon} - u_s)|\\
            & + \sigma(V_{t - \varepsilon} - u_s)|\beta(R_s - q_s) - \beta(R_{t - \varepsilon} - q_s)|
        \end{split}
    \end{equation*}
    Using the boundedness of $\beta$ and (\ref{sigma-lip}), 
    \begin{equation}
    \label{sigmaBound}
        \beta(R_s - q_s)|\sigma(V_s - u_s) - \sigma(V_{t - \varepsilon} - u_s)| \leq K''||V_s - u_s|^\gamma - |V_{t-\varepsilon} - u_s|^\gamma|
    \end{equation}
    for some constant $K''$. Since $\beta \in C^1_c(\mathbb{R}^3)$ and non-negative, it is Lipschitz as well. Using this and property (\ref{sigma-gamma}),
    \begin{equation*}
        \sigma(V_{t - \varepsilon} - u_s)|\beta(R_s - q_s) - \beta(R_{t - \varepsilon} - q_s)| \leq K'''(1 + |V_{t - \varepsilon} - u_s|^\gamma)|R_s - R_{t- \varepsilon}|
    \end{equation*}
    for some constant $K'''$. Thus, given a large enough constant $K$,
    \begin{equation}
    \label{a_2+a_3 bound}
        \begin{split}
            & A_2^\eta(s) + A_3^\eta(s)\\
            & \leq K\int_\chi(|R_s - R_{t - \varepsilon}| + |V_s - V_{t-\varepsilon}|^\gamma +|V_{t-\varepsilon} - u_s|^\gamma||R_s - R_{t - \varepsilon}|)[|V_s - u_s| + |V_{t - \varepsilon} - u_s|]d\eta\\
        \end{split}
    \end{equation}
    thanks to the $\gamma$ being within $(0,1]$. We now bound the expectations of $A_1,A_2,A_3$. For $A_1$, we have 
    \begin{equation*}
    \begin{split}
        E[A_1^\eta(s)] & \leq K_1\int_\chi E[(1 + |u_s|^\gamma + |V_s|^\gamma + |V_{t-\varepsilon}|^\gamma)|V_s - V_{t-\varepsilon}|] d\eta\\
        & \leq K_1'\int_\chi E[1 + |u_s|^{1 + \gamma} + |V_s|^{1 + \gamma} + |V_{t - \varepsilon}|^{1 + \gamma}]^{\gamma/(\gamma + 1)}E[|V_s - V_{t-\varepsilon}|^{1 + \gamma}]^{1/(1 + \gamma)}d\eta\\
        & \leq K\varepsilon^{1/(1+\gamma)}
    \end{split}
    \end{equation*}
    using a H\"{o}lder inequality with $p = 1 + \gamma$ and $q = 1 + \frac{1}{\gamma}$, allow a constant $K_1'$ to split the new power to each term in the first expectation, then the finite moments of the processes and (\ref{vt - vu ineq}). Now for the sum of $A_2$ and $A_3$, similar to $A_1$ (using the same $p$ and $q$):
    \begin{equation}
    \label{bound0}
    \begin{split}
        &E[|V_s -  V_{t - \varepsilon}|^\gamma|V_s - u_s|] \leq E[|V_s - V_{t-\varepsilon}|^{1 + \gamma}]^{\gamma/(\gamma + 1)}E[|V_s - u_s|^{1 + \gamma}]^{1/(\gamma + 1)}\\
        & \leq KE[|V_s - V_{t-\varepsilon}|^{1 + \gamma}]^{\gamma/(\gamma + 1)}E[|V_s|^{1 + \gamma} + |u_s|^{1 + \gamma}]^{1/(\gamma + 1)} \leq K_t\varepsilon^{\gamma/(\gamma + 1)}
    \end{split}
    \end{equation}
    Likewise $E[|V_s - V_{t - \varepsilon}|^{\gamma}|V_{t-\varepsilon} - u_s|] \leq K\varepsilon^{\gamma/(\gamma + 1)}$.\\  Next,
    \begin{equation}
    \label{bound1}
        E[|V_{t-\varepsilon} - u_s|^{1 + \gamma}|R_s - R_{t-\varepsilon}] \leq E[|V_{t-\varepsilon} - u_s|^{2 + 2\gamma}]^{1/2}E[|R_s - R_{t-\varepsilon}|^2]^{1/2} \leq K_{t,\gamma}\varepsilon^{3/4}
    \end{equation}
    as $E|R_s - R_{t-\varepsilon}|^2 \leq \varepsilon^{3/2}$. Likewise, 
    \begin{equation}
    \label{bound2}
        \begin{split}
            & E[|V_{t-\varepsilon} -u_s|^\gamma|V_s - u_s||R_s - R_{t-\varepsilon}|] \\
            & \leq E[|V_{t - \varepsilon} - u_s|^{2\gamma}|V_s - u_s|^2]^{1/2}E[|R_s - R_{t - \varepsilon}|^{2}]^{1/2}\\
            & \leq E[|V_{t - \varepsilon} - u_s|^{2\gamma}]E[|V_s - u_s|^4]^{1/4}E[|R_s - R_{t - \varepsilon}|^{2}]^{1/2}\\
            & \leq K_{s,\gamma}\varepsilon^{3/4}
        \end{split}
    \end{equation}
    using the Cauchy-Schwarz inequality and the bound on $E|R_s - R_{t - \varepsilon}|^{2}.$\\
    Using previous bounds, 
    \begin{equation}
    \label{bound3}
        \begin{split}
            &E[|V_s -  u_s||R_s - R_{t-\varepsilon}|] \leq E[|V_s - u_s|^{2}]^{1/2}E[|R_s - R_{t-\varepsilon}|^{2}]^{1/2}\\
        & \leq K_{s,\gamma}\varepsilon^{3/4}
        \end{split}
    \end{equation}
    and likewise $E[|V_{t-\varepsilon} -  u_s||R_s - R_{t-\varepsilon}|] \leq K\varepsilon^{3/2}$.\\
    Collecting bounds (\ref{bound0})-(\ref{bound3}), there is a large enough constant $K_{t,\gamma}$ and a positive power $\kappa$ such that $E|V_t - V_t^\varepsilon| \leq K_{t,\gamma}\varepsilon^{\kappa}$. In fact, $\kappa = 1 + \frac{\gamma}{\gamma + 1}$ as it is the minimal power available. For position,
    \begin{equation*}
        E|R_t - R_t^\varepsilon| \leq \int_{t - \varepsilon}^tE|V_s - V_{s}^\varepsilon|ds \leq K_{t,\gamma}\varepsilon^{\kappa + 1}
    \end{equation*}
    and as such, $(R_t^\varepsilon,V_t^\varepsilon) \to (R_t,V_t)$ in $L^1$ as $\varepsilon \to 0$ for $\gamma \in (0,1]$.
    \textit{Step 4.} Examine now $\gamma \in (1,2)$. The first two steps of the proof still apply when $\gamma \in (1,2)$. However, using (\ref{sigmaBound}) to get to (\ref{a_2+a_3 bound}) will not work as $|x|^k$ is not H\"{o}lder continuous for $k > 1$. Instead we use (\ref{ab-ineq}) to give
    \begin{equation*}
        \begin{split}
            &K''||V_s - u_s|^\gamma - |V_{t-\varepsilon} - u_s|^\gamma|\\
            &\leq K''C_{\gamma}||V_s - u_s| - |V_{t-\varepsilon} - u_s||[|V_s - u_s|^{\gamma - 1} + |V_{t-\varepsilon} - u_s|^{\gamma - 1}]\\
            & \leq K''C_\gamma|V_s - V_{t-\varepsilon}|[|V_s - u_s|^{\gamma - 1} + |V_{t-\varepsilon} - u_s|^{\gamma - 1}]
        \end{split}
    \end{equation*}
    where this is allowed as $\gamma > 1$ so that $\gamma - 1 > 0$ and thus we can use a standard inequality given below.
 Thus, there is some large enough constant to say
    \begin{equation*}
        \begin{split}
            & A_2^\eta(s) + A_3^\eta(s)\\
            & \leq K\int_\chi(|R_s - R_{t - \varepsilon}| + |V_s - V_{t-\varepsilon}||V_s - u_s|^{\gamma - 1} \\
            &\,\,\, + |V_s - V_{t - \varepsilon}||V_{t-\varepsilon} - u_s|^{\gamma - 1} +|V_{t-\varepsilon} - u_s|^\gamma||R_s - R_{t - \varepsilon}|)[|V_s - u_s| + |V_{t - \varepsilon} - u_s|]d\eta\\
        \end{split}
    \end{equation*}
    Many of the bounds above still work, and it still holds that $EA_1^\eta(s) \leq K\varepsilon^{1/(\gamma+1)}$, as in Step 3. However, there are new bounds to establish for the sum $A_2^\eta(s) + A_3^\eta(s)$. First, by H\"{o}lder's inequality,
    \begin{equation*}
        E[|V_s - V_{t-\varepsilon}||V_s - u_s|^\gamma] \leq E[|V_s - V_{t-\varepsilon}|^{1 + \gamma}]^{1/(\gamma + 1)}E[|V_s - u_s|^{\gamma + 1}]^{\gamma/(\gamma + 1)} \leq K\varepsilon^{1/(\gamma + 1)}.
    \end{equation*}
    Similarly, $E[|V_s - V_{t-\varepsilon}||V_{t-\varepsilon} - u_s|^{\gamma}] \leq K\varepsilon^{\gamma/(\gamma + 1)}$. Now, we examine $E[|V_s - V_{t-\varepsilon}||V_s - u_s|^{\gamma - 1}|V_{t-\varepsilon} - u_s|]$. We use the H\"{o}lder inequality to establish,
    \begin{equation*}
        \begin{split}
            & E[|V_s - V_{t-\varepsilon}||V_s - u_s|^{\gamma - 1}|V_{t-\varepsilon} - u_s|]\\
            & \leq E[|V_s - V_{t-\varepsilon}|^{\gamma + 1}]^{1/(\gamma + 1)}E[|V_s - u_s|^{\gamma - \frac{1}{\gamma}}|V_{t-\varepsilon} - u_s|^{1 + \frac{1}{\gamma}}]^{\gamma/(\gamma + 1)}\\
            & \leq K\varepsilon^{1/(\gamma + 1)}E[|V_s - u_s|^{2\gamma - \frac{2}{\gamma}}]^{\gamma/(2\gamma + 2)}E[|V_{t-\varepsilon} - u_s|^{2 + \frac{2}{\gamma}}]]^{\gamma/(2\gamma + 2)}\\
            & \leq K_{t,\gamma}\varepsilon^{1/(\gamma + 1)}
        \end{split}
    \end{equation*}
    where the latter comes from the fact that $\frac{8}{2 - \gamma} \geq 2\gamma + 2$ and $\frac{8}{2 - \gamma} \geq 2 + \frac{2}{\gamma}$ for $\gamma \in (1,2)$. Thus, we may use (\ref{vt - vu ineq}). Likewise, $E[|V_s - V_{t-\varepsilon}||V_{t - \varepsilon} - u_s|^{\gamma - 1}|V_s - u_s|] \leq K\varepsilon^{1/(\gamma + 1)}$ using a similar argument. The remainder of Step 4 carries over from the $\gamma \in (0,1]$ case.\\ In total, it can be said that $E|V_t - V_t^\varepsilon| \leq K_{t,\gamma} \varepsilon^{\kappa_\gamma}$ where $\kappa_\gamma = 1 + \frac{1}{\gamma + 1} \wedge 1 + \frac{\gamma}{\gamma + 1}$, depending on the specific $\gamma$ and $K_{t,\gamma}$ is a finite constant dependent on $t$ to a positive power and $\gamma$. 
\end{proof}
The convergence established above leads us to the idea that Theorem \ref{deb-rom-exist} may be applicable to the Boltzmann-Enskog process. However, a more powerful estimate is needed to use Theorem \ref{deb-rom-exist}.
\\ \textbf{A Useful Estimate for the Main Theorem}\\
The bounds in Theorem \ref{approx-def} are insufficient for satisfying the hypotheses of Theorem \ref{deb-rom-exist}. A fruitful estimate is given in the following lemma.
\begin{lemma}
\label{approx-est-lem}
Make the assumptions on $\alpha$, $\beta$ and $\sigma$ as stated in section \ref{assume} for some $\gamma \in (0,2) \text{ and } \nu \in (0,1)$. Let
$f_0$ be an $\R^6$ probability measure such that it is not a Dirac mass. Assume further $\int_{\R^6}|v|^2f_0(dr,dv) < \infty$. Consider the approximating process $(R^\varepsilon,V^\varepsilon)$, defined in Theorem \ref{approx-def} associated with a weak solution $f_t$ to (\ref{Enskog-weak}) starting from
$f_0$. For all $h \in \R^3$ , all $\phi \in L^\infty(\R^3)$, all $0 < t_0 \leq t- \varepsilon \leq t_1$ with $\varepsilon \in (0,1)$,
\end{lemma}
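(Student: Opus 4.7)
The plan is to exploit the conditional additive structure of $(V_t^\varepsilon)$: frozen at time $t-\varepsilon$, the increment $V_t^\varepsilon - V_{t-\varepsilon}$ is, conditionally on the $\sigma$-algebra $\mathcal{G}$ generated by $\mathcal{F}_{t-\varepsilon}$ and the auxiliary process $(u_\cdot,q_\cdot)$, a Poisson stochastic integral with purely deterministic characteristics---an additive process whose conditional characteristic function is explicit through the L\'evy--Khintchine formula. First I would condition and write
\begin{equation*}
E[\phi(V_t^\varepsilon+h)-\phi(V_t^\varepsilon)]
= E\!\left[\int_{\R^3}\phi(v)\bigl(g_\varepsilon(v-h)-g_\varepsilon(v)\bigr)dv\right],
\end{equation*}
where $g_\varepsilon$ is the conditional density of $V_t^\varepsilon$ given $\mathcal{G}$, and use $\norm{\phi}_{L^\infty}$ to reduce the task to controlling $E\int_{\R^3}|g_\varepsilon(v-h)-g_\varepsilon(v)|dv$.

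Second, I would compute the conditional characteristic function. With $\hat\alpha_s=\hat\alpha(V_{t-\varepsilon},R_{t-\varepsilon},u_s,q_s,\theta,\xi+\xi_0,z)$, the L\'evy--Khintchine formula yields
\begin{equation*}
\bigl|\widehat{g_\varepsilon}(\zeta)\bigr|
\le \exp\!\left(\int_{t-\varepsilon}^{t}\!\!\int_{\chi}\sigma(|V_{t-\varepsilon}-u_s|)\beta(R_{t-\varepsilon}-q_s)\Psi(\zeta,V_{t-\varepsilon}-u_s)\,d\eta\,ds\right),
\end{equation*}
with $\Psi(\zeta,w)=\int_\Xi\bigl(\cos(\zeta\cdot\alpha(0,w,\theta,\xi))-1\bigr)Q(d\theta)d\xi$. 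Using the lower bound $b(\theta)\ge c\theta^{-1-\nu}$ in \eqref{defn-of-Q-measure}, the identity $|\alpha(0,w,\theta,\xi)|\asymp\theta|w|$ coming from \eqref{alpha-defn}--\eqref{alpha-lip}, and averaging the $\xi$-direction over $S^1$ to capture the transverse component of $w$, a standard computation (as in Section~4 of \cite{four}) gives $\Psi(\zeta,w)\le -c_\nu|\zeta|^\nu|w|^\nu$. Hence
\begin{equation*}
\bigl|\widehat{g_\varepsilon}(\zeta)\bigr|\le \exp\!\bigl(-c\,\varepsilon\,|\zeta|^\nu X_{t-\varepsilon}\bigr), \qquad
X_{t-\varepsilon}:=\int_\chi\sigma(V_{t-\varepsilon}-u_s)\beta(R_{t-\varepsilon}-q_s)|V_{t-\varepsilon}-u_s|^\nu d\eta.
\end{equation*}

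Third, Fourier inversion turns this decay into the required increment bound:
\begin{equation*}
\int_{\R^3}|g_\varepsilon(v-h)-g_\varepsilon(v)|\,dv \le C|h|\int_{\R^3}|\zeta|\,|\widehat{g_\varepsilon}(\zeta)|\,d\zeta \le C|h|\bigl(\varepsilon X_{t-\varepsilon}\bigr)^{-4/\nu}.
\end{equation*}
Taking expectations, truncating on the event $\{X_{t-\varepsilon}\ge\delta\}$ with $\delta$ optimized against $|h|$ and $\varepsilon$, and invoking Markov's inequality on the complement, one arrives at an estimate of the form $K_{t_0,t_1,\gamma,\nu}\norm{\phi}_{L^\infty}|h|\varepsilon^{-p}$ for an explicit $p>0$ depending only on $\nu$ and $\gamma$.

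The main obstacle will be the lower bound on $X_{t-\varepsilon}$: one must guarantee that $V_{t-\varepsilon}-u_s$ and $R_{t-\varepsilon}-q_s$ are simultaneously non-degenerate with sufficient probability, uniformly in $t\in[t_0+\varepsilon,t_1+\varepsilon]$. This is precisely where the assumption that $f_0$ is not a Dirac mass and has finite second moment enters: it forces the velocity marginal to retain positive variance at time $t-\varepsilon$ (via the conservation identities \eqref{conv-laws}), while the transport relation $R_{t-\varepsilon}=R_0+\int_0^{t-\varepsilon}V_s\,ds$ with $t-\varepsilon\ge t_0>0$ ensures $R_{t-\varepsilon}-q_s$ falls in $\mathrm{supp}(\beta)$ with controllable probability. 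Quantifying this small-ball estimate for $X_{t-\varepsilon}$ and combining it with the optimization over $\delta$ is the technical heart of the lemma.
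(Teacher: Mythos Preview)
Your strategy shares the paper's starting point---condition at time $t-\varepsilon$, exploit the additive structure, and pass through the characteristic function---but two of your central steps do not go through as written, and a third ingredient is missing.

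\textbf{The Fourier inversion inequality is false.} You assert
\[
\int_{\R^3}|g_\varepsilon(v-h)-g_\varepsilon(v)|\,dv \le C|h|\int_{\R^3}|\zeta|\,|\widehat{g_\varepsilon}(\zeta)|\,d\zeta.
\]
The right-hand side controls $\|\nabla g_\varepsilon\|_{L^\infty}$, not $\|\nabla g_\varepsilon\|_{L^1}$; there is no general bound of the $L^1$-norm of a function by the $L^1$-norm of its Fourier transform. The paper closes this gap via a lemma of Fournier (Lemma~\ref{ssw1} here) that is specific to infinitely divisible laws: it bounds $\|\nabla k\|_{L^1}$ by $(1+m_1^4(\lambda)+m_4(\lambda))\int e^{-\mathrm{Re}\,\Phi(\xi)}(1+|\xi|)\,d\xi$, where $m_n(\lambda)$ are moments of the L\'evy measure. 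Controlling $m_4(\lambda)$ uniformly in $\varepsilon$ is exactly why the paper first splits off the small-angle piece $U_t^\varepsilon$ (restricting to $\theta<\varepsilon^{1/\nu}$) before rescaling by $\varepsilon^{-1/\nu}$; without that cutoff the rescaled fourth moment blows up like $\varepsilon^{1-4/\nu}$. Your proposal omits both the lemma and the $\theta$-splitting.

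\textbf{The pointwise lower bound on $\Psi(\zeta,w)$ fails.} You claim $\Psi(\zeta,w)\le -c_\nu|\zeta|^\nu|w|^\nu$ after averaging in $\xi$. But if $\zeta$ is parallel to $w$, then $\langle\zeta,\Gamma(w,\xi)\rangle=0$ for every $\xi$, and the only contribution to $\zeta\cdot\alpha$ is the $\sin^2(\theta/2)\,\zeta\cdot w$ term, which is $O(\theta^2)$ rather than $O(\theta)$; the resulting $\theta$-integral against $\theta^{-1-\nu}$ does not produce the claimed $|\zeta|^\nu$ decay. This degeneracy in the direction of $w$ is precisely why the paper does \emph{not} freeze $w=V_{t-\varepsilon}-u_s$ but instead integrates over the distribution of $u_s$: Section~4 shows the velocity marginal has full support on $\R^3$, and from this one extracts a set $K(v_0,\Gamma(\kappa,\xi))$ of uniformly positive $\mu$-measure on which $|\langle v_0-u,\Gamma(\kappa,\xi)\rangle|\ge|\kappa|$ (Lemma~\ref{Lower-BD}). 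That yields a \emph{deterministic} lower bound $\mathrm{Re}\,\psi_{\varepsilon,t,v_0,r_0}(\varepsilon^{-1/\nu}\kappa)\ge Cq_{t_0,t_1}(|\kappa|^2\wedge|\kappa|^\nu)$, uniform in $v_0,r_0$---so no truncation on $\{X_{t-\varepsilon}\ge\delta\}$ is needed, and the final exponent is exactly $\varepsilon^{-1/\nu}$ rather than an unspecified $\varepsilon^{-p}$. Your small-ball argument for $X_{t-\varepsilon}$ is both unnecessary and, as sketched, would not recover the sharp exponent the main theorem requires.
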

\begin{align}
    |E[\phi(V_t^\varepsilon + h)) - \phi(V_t^\varepsilon)]| \leq C_{t_0,t_1}\norm{\phi}_{L^\infty(\mathbb{R}^3)}|h|\varepsilon^{-1/\nu}
\end{align}
This result is carefully built in the sections that follow. In order to establish this lemma, more information on the support of the Boltzmann-Enskog distribution is required. The subsequent section of this paper will center around the measure-theoretic properties of the probability distribution of the Boltzmann-Enskog process.
\section{Establishing Support on $\mathbb{R}^3$}
This section is dedicated to establishing that the support of the marginal distribution for velocity has support in $\R^3$. In \cite{four}, Fournier provided an intuitive result that details a sufficient condition for the support of an $\mathbb{R}^3$ measure being all of $\mathbb{R}^3$. This result is given below.
\begin{lemma}[Lemma 4.1 from Fournier\,\cite{four}]
\label{fournier-supp}
Consider $g \in \mathcal{P}(\mathbb{R}^3)$ enjoying the following property: if $v_1,v_2 \in \text{Supp}(g)$, then $\mathcal{S}((v_1 + v_2)/2,|v_1 - v_2|/2) \subset \text{Supp}(g)$. If $g$ is not a Dirac mass, then $\text{Supp}(g) = \mathbb{R}^3$.     
\end{lemma}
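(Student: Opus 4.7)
The plan is to iterate the hypothesis: once the support contains any nondegenerate sphere, the collision-sphere operation inflates it to a larger ball, and iterating drives the radius to infinity. Concretely, since $g$ is not a Dirac mass, $S := \text{Supp}(g)$ contains two distinct points $v_1 \neq v_2$, so applying the hypothesis once gives $\mathcal{S}(c_0, r_0) \subset S$ with $c_0 = (v_1+v_2)/2$ and $r_0 = |v_1-v_2|/2 > 0$. The remainder of the argument then reduces to a purely geometric inflation step.

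The key claim is that if $\mathcal{S}(c, r) \subset S$ for some $r > 0$, then $\overline{B(c, r\sqrt{2})} \subset S$. The proof rests on the Pythagorean identity $|M - c|^2 + (r')^2 = r^2$ satisfied by the midpoint $M := (w_1 + w_2)/2$ and half-separation $r' := |w_1-w_2|/2$ of any two points $w_1, w_2 \in \mathcal{S}(c, r)$; conversely, any $M$ with $|M - c| < r$ is realized as such a midpoint, with the direction of $w_1 - w_2$ free to range over the circle of unit vectors perpendicular to $M - c$ in $\mathbb{R}^3$. Thus, whether a given $p$ with $|p - c| \leq r\sqrt{2}$ lies on some new collision sphere $\mathcal{S}(M, r')$ reduces, via the same identity, to solving the system $|M-c|^2 + |M-p|^2 = r^2$ with $|M-c| < r$. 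Completing the square rewrites the equation as $|M - (p+c)/2|^2 = (2r^2 - |p-c|^2)/4$, a real sphere precisely when $|p-c| \leq r\sqrt{2}$, and a short estimate of $|M-c|^2$ along this sphere (writing $M - c = (p-c)/2 + \rho\hat{u}$ for $\hat{u}$ a unit vector) confirms that it always meets the open ball $\{|M-c| < r\}$.

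Given the inflation claim, the conclusion is immediate: starting from $\mathcal{S}(c_0, r_0)$, one gets $\overline{B(c_0, r_0 \sqrt{2})} \subset S$; applying the claim to the outer sphere of this ball gives $\overline{B(c_0, 2 r_0)} \subset S$, and inductively $\overline{B(c_0, r_0 \cdot 2^{n/2})} \subset S$ for every $n \in \mathbb{N}$. Letting $n \to \infty$ forces $S = \mathbb{R}^3$. The main obstacle is the inflation step itself, and in particular pinning down the sharp factor $\sqrt{2}$; the Pythagorean relation between the midpoint distance and the new-sphere radius is what makes both the algebra and the sharpness transparent, and the availability of a whole circle of perpendicular directions (i.e.\ dimension $3$) is what lets us freely match any required midpoint.
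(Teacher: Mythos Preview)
Your proof is correct. The paper does not actually prove this lemma: it cites Fournier \cite{four} and remarks only that ``the proof is entirely geometrical and topological, with the symmetries applied being universal to $\mathbb{R}^3$ vectors.'' Your inflation argument---extracting an initial nondegenerate sphere from two distinct support points, then using the Pythagorean identity $|M-c|^2 + (r')^2 = r^2$ to show that $\mathcal{S}(c,r)\subset S$ forces $\overline{B(c,r\sqrt{2})}\subset S$, and iterating to send the radius to infinity---is exactly the kind of geometric argument Fournier's original proof employs, so there is no discrepancy to report. One small comment: the ``circle of perpendicular directions'' you invoke at the end plays no role, since the collision sphere $\mathcal{S}(M,r')$ is determined by $M$ alone; all you need is that \emph{some} pair $w_1,w_2\in\mathcal{S}(c,r)$ has midpoint $M$, which holds in any dimension $\geq 2$, not specifically $3$.
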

The proof is entirely geometrical and topological, with the symmetries applied being universal to $\mathbb{R}^3$ vectors. To reduce ambiguity, we recall the definition of support for Radon measures. We recall that a Radon measure on $X$ is a Borel measure such that it is finite on all compact sets, outer regular on all Borel sets and inner regular on all open sets. 
\begin{defn}
    Let $\mu$ be a Radon measure on some space $X$. Let $N$ be the union of all open $U \subset X$ such that $\mu(U) = 0$. The complement of $N$ is called the \textbf{support} of $U$.
\end{defn}
Then clearly, if $X$ is a locally compact metrizable Hausdorff space with Radon measure $\mu$, $x \in \text{Supp}(\mu)$ if and only if for every $\varepsilon > 0$, $\mu(B(x,\varepsilon)) > 0$. We note that the distribution of the Boltzmann-Enskog process is Radon. Indeed, by Theorem 7.8 in \cite{foll}, 
\begin{thm}
Let $X$ be a locally compact Hausdorff space in which every open set is $\sigma$-compact (for example, if $X$ is second countable). Then every Borel measure on $X$ that is finite on compact sets is regular and hence Radon.
\end{thm}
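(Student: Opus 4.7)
The plan is to apply the Riesz--Markov--Kakutani representation theorem to manufacture a Radon measure that represents integration against the given $\mu$, and then show the two measures coincide. Given a Borel measure $\mu$ on $X$ that is finite on compact sets, the linear functional $I(f) := \int_X f\, d\mu$ is well-defined and positive on $C_c(X)$ because each $f \in C_c(X)$ is supported on a compact set of finite $\mu$-measure. Since $X$ is a locally compact Hausdorff space, Riesz--Markov--Kakutani produces a unique Radon measure $\nu$ on $X$ with $\int f\, d\mu = \int f\, d\nu$ for every $f \in C_c(X)$. It then remains to show $\mu = \nu$ as set functions on $\mathcal{B}(X)$.

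First I would show $\mu(U) = \nu(U)$ for every open $U \subseteq X$. By the $\sigma$-compact-open hypothesis write $U = \bigcup_{n \geq 1} K_n$ with $K_n$ compact. Local compactness together with Urysohn's lemma yields $g_n \in C_c(X)$ satisfying $\mathbf{1}_{K_n} \leq g_n \leq \mathbf{1}_U$; the pointwise maxima $f_n := \max_{i \leq n} g_i$ form an increasing sequence in $C_c(X)$ with $f_n \nearrow \mathbf{1}_U$. Applying the monotone convergence theorem to both sides of the identity $\int f_n\, d\mu = \int f_n\, d\nu$ yields $\mu(U) = \nu(U)$.

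Next I would upgrade this agreement from open sets to all Borel sets via a Dynkin $\pi$--$\lambda$ argument on relatively compact opens. Because $X$ is itself an open set, the hypothesis gives $X = \bigcup_m L_m$ with $L_m$ compact; LCH lets each $L_m$ be enclosed in a relatively compact open, and taking finite unions produces an increasing sequence of relatively compact open sets $V_n \nearrow X$. Each $V_n$ has finite $\mu$-measure, and $\mu(V_n) = \nu(V_n)$ by the previous step. The family $\mathcal{L}_n := \{E \in \mathcal{B}(X) : \mu(E \cap V_n) = \nu(E \cap V_n)\}$ is then a $\lambda$-system -- closure under proper differences uses the finiteness $\mu(V_n) = \nu(V_n) < \infty$, closure under countable increasing unions follows from continuity from below -- and it contains the $\pi$-system of open sets. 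Dynkin's theorem forces $\mathcal{L}_n = \mathcal{B}(X)$, and continuity from below as $n \to \infty$ then concludes $\mu(E) = \nu(E)$ for every $E \in \mathcal{B}(X)$.

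The main obstacle is precisely the subtraction step in verifying that $\mathcal{L}_n$ is a $\lambda$-system: without localization, $\mu$ may be infinite and one cannot complement by subtraction. The cover by relatively compact open sets, which is available only because LCH together with the $\sigma$-compact-open hypothesis forces $X$ itself to be $\sigma$-compact and covered by relatively compact opens, is what makes the monotone-class argument go through. Once the identification $\mu = \nu$ is secured, $\mu$ inherits from $\nu$ every defining property of a Radon measure -- finiteness on compacts, outer regularity on all Borel sets, and inner regularity on open sets -- completing the proof.
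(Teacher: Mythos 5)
The paper itself does not supply a proof of this theorem: it simply cites it as Theorem~7.8 of Folland's \emph{Real Analysis}, so the comparison is with the textbook argument rather than anything in the paper.

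Your proof is correct, and it shares the same opening moves as the standard one: feed $\mu$ through Riesz--Markov--Kakutani to obtain a Radon measure $\nu$, then equate $\mu$ and $\nu$ on open sets by writing $U$ as a countable union of compacts, producing Urysohn functions $g_n \nearrow \mathbf{1}_U$ and applying monotone convergence on both sides. Where you genuinely diverge is the upgrade from open sets to all Borel sets. Folland's proof invokes regularity of $\nu$ directly: given Borel $E$ and $\varepsilon>0$ he uses the (already established) fact that a $\sigma$-finite Radon measure on such a space is fully regular to sandwich $F \subset E \subset U$ with $\nu(U\setminus F)<\varepsilon$, notes $U\setminus F$ is open so $\mu(U\setminus F)<\varepsilon$ too, and squeezes. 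Your route avoids quoting that prior regularity proposition by instead running a Dynkin $\pi$--$\lambda$ argument localized to the exhaustion $V_n \nearrow X$ by relatively compact opens; the localization supplies the finiteness needed for the subtraction closure of the $\lambda$-system, exactly as you explain. Your version is a bit more self-contained (it needs only the Riesz theorem and elementary measure theory), at the mild cost of constructing the exhaustion and invoking Dynkin; Folland's is shorter if one has already proved the regularity proposition.

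One small thing you should patch: the statement asserts the measure is \emph{regular}, not merely Radon. You conclude by saying $\mu$ inherits finiteness on compacts, outer regularity, and inner regularity on opens from $\nu$, which gives Radon but not (a priori) inner regularity on arbitrary Borel sets. The missing half is immediate once one notes that $X$ is $\sigma$-compact (you already observed this) so $\mu=\nu$ is a $\sigma$-finite Radon measure, and every $\sigma$-finite Radon measure is regular; add that one line and the statement is fully proved.
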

Therefore finite measures on $\mathbb{R}^n$ are Radon, and we may use the notion of support defined above. 
\begin{prop}
Suppose that $(V_t,R_t)$ is an Boltzmann-Enskog process with $\gamma \in (0,2)$ and initial distribution $\mu_0$ which is not a Dirac mass. Then the support of the marginal distribution of $\mu_t$ with respect to $v$ is $\mathbb{R}^3$. Namely, $m_t(dv) = \int_{\mathbb{R}^3}\mu_t(dx,\circ)$ has support in the entirety of $\mathbb{R}^3$.
\end{prop}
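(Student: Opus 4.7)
The strategy is to apply Lemma \ref{fournier-supp} to the velocity marginal $m_t$. Two properties must be verified: (i) $m_t$ is not a Dirac mass; and (ii) for every $v_1, v_2 \in \text{Supp}(m_t)$, the sphere $\mathcal{S}((v_1+v_2)/2, |v_1-v_2|/2)$ is contained in $\text{Supp}(m_t)$. Given these, Fournier's lemma immediately yields $\text{Supp}(m_t) = \R^3$.

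For (i), I would appeal to conservation of momentum and energy from \eqref{conv-laws}, which give $E[V_t] = E[V_0]$ and $E[|V_t|^2] = E[|V_0|^2]$. If $m_t = \delta_{v^*}$, both together force $|E[V_0]|^2 = E[|V_0|^2]$, i.e., $V_0$ is a.s. constant, contradicting the hypothesis on $\mu_0$ (here interpreted to mean the velocity marginal $m_0$ is non-degenerate, as otherwise the dynamics trivialize: if $m_0 = \delta_{v_0}$, the independent copy satisfies $u_s \equiv V_s$ a.s., so $\alpha(V_s, u_s, \theta, \xi) \equiv 0$ and $m_t = \delta_{v_0}$ for all $t$). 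For (ii), the collision parameterization \eqref{new-system}--\eqref{alpha-defn} provides a surjection $(\theta,\xi) \in (0,\pi]\times S^{1} \mapsto v_1 + \alpha(v_1,v_2,\theta,\xi)$ onto the sphere $\mathcal{S}((v_1+v_2)/2,|v_1-v_2|/2)$; hence for any $w$ on the sphere, there exist $(\theta_0,\xi_0)$ with $w = v_1 + \alpha(v_1,v_2,\theta_0,\xi_0)$. To show $w \in \text{Supp}(m_t)$, it suffices to produce, for each $\varepsilon > 0$, a positive-probability event on which $|V_t - w| < \varepsilon$. I would construct this event as follows: in a short window $(t-\delta, t)$, require that (a) $V_{s-}$ is close to $v_1$ at some time $s$, (b) the Poisson measure $N$ has exactly one atom with $(\theta,\xi)$ near $(\theta_0,\xi_0)$, $u_s(\eta) \approx v_2$, and $\beta(R_s - q_s(\eta)) > 0$, (c) the threshold coordinate $z$ lies below $\sigma(|V_{s-}-u_s|)\beta(R_s-q_s)$, and (d) no other effective atom of $N$ falls in the window. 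On such an event, continuity of $\alpha$ yields $V_t$ close to $w$. Positivity of each factor comes from: the Poisson intensity near $(\theta_0, \xi_0)$ being positive by \eqref{defn-of-Q-measure}; the support properties of $m_s$ for $s$ near $t$; the fact that $0 \in \text{Supp}(R_s - q_s(\eta))$ since $R$ and its copy are identically distributed; and the non-triviality of $\beta$ near the origin.

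The main obstacle lies in making rigorous the joint positive-probability claim, since the constraints on $V_{s-}$, on the copy $(q_s, u_s)$, and on the spatial factor $\beta(R_s - q_s)$ interact nontrivially. In particular, I would need to verify that $\mu_s$ places positive mass on neighborhoods of $(R_s(\omega), v_2)$ with positive $\omega$-probability, which requires unwinding the independence structure between $(R,V)$ and $(q,u) \sim \mu_s$ and showing that the spatial support of $\mu_s$ is compatible with the realized $R_s(\omega)$. A secondary technical issue is transferring $v_1 \in \text{Supp}(m_t)$ to $\text{Supp}(m_{s-})$ for some $s$ slightly less than $t$; here the c\`{a}dl\`{a}g continuity of $V_\cdot$ and the Portmanteau theorem (applied to open balls around $v_1$ and the weak limit $m_s \to m_t$ as $s \uparrow t$) supply the necessary approximation.
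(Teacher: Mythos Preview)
Your overall plan---verify that $m_t$ is non-Dirac and that the support is closed under the sphere operation, then invoke Lemma~\ref{fournier-supp}---matches the paper. Part (i) is essentially the same as the paper's Step~1. The divergence is in part~(ii): the paper argues analytically via the weak formulation \eqref{Enskog-weak}, while you attempt a pathwise probabilistic construction on the Poisson random measure.

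There is a genuine gap in your pathwise argument. Condition (d), that ``no other effective atom of $N$ falls in the window $(t-\delta,t)$'', has probability zero in the present setting. Because there is no angular cutoff, $Q((0,\pi])=+\infty$ by \eqref{defn-of-Q-measure}, and the compensator $ds\,Q(d\theta)\,d\xi\,d\eta\,dz$ restricted to the effective region $\{z\le \sigma(|V_{s-}-u_s|)\beta(R_s-q_s)\}$ still has infinite mass on every nondegenerate time interval. Hence infinitely many jumps occur almost surely in $(t-\delta,t)$; you cannot isolate a single collision. One might try to salvage the idea by excluding only ``large'' jumps (say $\theta>\rho$) and controlling the accumulated effect of the remaining small jumps, but this requires quantitative estimates (on both $V$ and $R$, hence on the spatial factor $\beta$) that you have not supplied and that would need to mesh with the independent copy $(q_s,u_s)$. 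The spatial coupling you flag is a real difficulty in this route: you would need $\mu_s$ to charge neighborhoods of $(R_s(\omega),v_2)$ on a set of $\omega$ that also satisfies your other constraints, and nothing in the construction guarantees this.

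By contrast, the paper sidesteps all of these pathwise issues. Its Step~2 argues by contradiction at the level of the equation: if $m_t(B(v_0,\varepsilon))=0$, then $s\mapsto\int \phi_{v_0,\varepsilon}(v)\,\mu_s(dr,dv)$ is nonnegative, differentiable, and attains the value $0$ at $s=t$, so its derivative there vanishes. Reading off this derivative from \eqref{Enskog-weak} and using that the loss term is already zero (since $\phi_{v_0,\varepsilon}$ is supported in $B(v_0,\varepsilon)$), one obtains
\[
\int_{\R^{12}}\int_{\Xi}\sigma(|v-u|)\beta(r-q)\,\phi_{v_0,\varepsilon}\bigl(v+\alpha(v,u,\theta,\xi)\bigr)\,Q(d\theta)\,d\xi\,\mu_t(dr,dv)\,\mu_t(dq,du)=0,
\]
which in particular forces the integral of $\mathbf{1}_{\{v+\alpha\in B(v_0,\varepsilon)\}}\mathbf{1}_{\{v\neq u,\ \theta\in(0,\pi/2)\}}$ against $m_t\otimes m_t\otimes Q\otimes d\xi$ to vanish. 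Step~3 then takes the contrapositive: if $v_1,v_2\in\mathrm{Supp}(m_t)$ and $v_0\in\Delta_{v_1,v_2}$, a small product neighborhood in $(v,u,\theta,\xi)$ has positive measure, contradicting that integral being zero; hence $m_t(B(v_0,\varepsilon))>0$. No pathwise control, no isolation of jumps, and the position variable enters only through the bounded factor $\beta$, which is handled by a trivial upper bound. If you want to repair your argument, the cleanest fix is to abandon the event-construction and adopt this weak-formulation derivative trick.
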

\begin{proof}
We follow the proof of Fournier, accounting for changes in position-based terms. 
\begin{itemize}
    \item [Step 1.] First, we note that 
    \begin{equation*}
        \int_{\mathbb{R}^6}|v - v_0|^2\mu_t(dx,dv) = \int_{\mathbb{R}^6}|v - v_0|^2\mu_0(dx,dv)
    \end{equation*}
    for any fixed $v_0 \in \mathbb{R}^3$ under the conservation of momentum and energy. Further,
    \begin{equation*}
        \int_{\mathbb{R}^6}|v - v_0|^2\mu_0(dx,dv) = \int_{\mathbb{R}^3}|v - v_0|^2m_0(dv) > 0
    \end{equation*}
   since $\mu_0$ is not a Dirac measure. Thus, 
    \begin{equation*}
        \int_{\mathbb{R}^3}|v - v_0|^2m_t(dv) > 0
    \end{equation*}
    for any fixed $v_0 \in \mathbb{R}^3$, meaning $m(t,dv)$ is not a Dirac measure for any time $t$.
    \item [Step 2.] Now we attempt to show that if for a point $v_0 \in \mathbb{R}^3$ and $\varepsilon > 0$, we have $m(t,B(v_0,\varepsilon)) = 0$, then we also have 
    \begin{equation*}
        \int_{\mathbb{R}^6}\int_{\mathbb{R}^3}\int_{\Xi}1_{\{v + \alpha(v,u,\theta,\xi) \in B(v_0,\varepsilon)\}}1_{\{v \neq u,\theta \in (0,\pi/2)\}}d\sigma m_t(du)m_t(dv) = 0
    \end{equation*}
    Let $\varepsilon > 0$ and $v_0 \in \mathbb{R}^3$ be given. Assume that $m_t(B(v_0,\varepsilon)) = 0$. Take $\phi_{\varepsilon,v_0} \in C^1_b(\mathbb{R}^3)$ to be strictly positive in $B(v_0,\varepsilon)$ and vanish outside of it. By Remark 2.7 in \cite{sund} (with the proof given in Theorem 4.3), we can note that the mapping 
    \begin{equation}
    \label{cty-integral}
        s \to \int_{\mathbb{R}^6}\phi_{v_0,\varepsilon}(v)\mu_s
    \end{equation}
    is continuous. For $\psi \in C_b^1(\mathbb{R}^6)$, they establish the continuity of
    \begin{equation}
    \label{cty-sund}
    s \to \int_{\mathbb{R}^{12}\times \Xi}(\mathcal{A}\psi)(r,v;q,u)\,\mu_s(dr,dv)\mu_s(dq,du)    
    \end{equation}
    where $\mathcal{A}$ is defined as in (\ref{A-operator}). Recall by (\ref{Enskog-weak}) 
    \begin{equation}
    \label{cty-expanded}
      \int_{\mathbb{R}^6}\psi(v)\,\mu_t(dr,dv) = \int_{\mathbb{R}^6}\psi(v)\,\mu_0(dr,dv) + \int_0^t\int_{\mathbb{R}^{12}}(\mathcal{A}\psi)(r,v;q,u)\mu_s(dr,dv)\mu_s(dq,du)
    \end{equation}
    then the continuity shown for (\ref{cty-sund}) gives the continuity of (\ref{cty-integral}). Differentiability of \eqref{cty-integral} is immediate using the continuity of (\ref{cty-sund}) in \eqref{cty-expanded}, taking $\phi_{v_0,\varepsilon}$ rather than $\psi$. Thus the mapping (\ref{cty-integral}) is continuous and differentiable. By hypothesis and the definition of $\phi_{v_0,\varepsilon}$, the mapping (\ref{cty-integral}) is non-negative and vanishes at $t$. Thus, its derivative at $t$ does as well. Using the weak formulation of the Boltzmann-Enskog equation (\ref{Enskog-weak}), we have 
    \begin{equation*}
    \begin{split}
        \int_{\mathbb{R}^{12}}\!\int_{0}^\pi\!\int_{S^2}\!& \sigma(|v - u|)\beta(r - q)[\phi_{\varepsilon,v_0}(v + \alpha(v,u,\theta,\xi)) - \phi_{\varepsilon,v_0}(v)]\\
        &\mu_t(dr,dv)\mu_t(dq,du)Q(d\theta)d\xi = 0.
    \end{split}
    \end{equation*}
    as since we took $\phi_{\varepsilon,v_0}$ to be solely dependent on velocity, we have $v \cdot \nabla_r \phi = 0$. However, we note that we immediately have 
    \begin{equation*}
        \int_{\mathbb{R}^{12}}\int_{0}^\pi\int_{S^2}\sigma(|v - u|)\beta(r - q)\phi_{\varepsilon,v_0}(v)\mu_t(dr,dv)\mu_t(dq,du)Q(d\theta)d\xi = 0
    \end{equation*}
    since (by assumption), 
    \begin{equation*}
        \begin{split}
        & \int_{\mathbb{R}^{12}}\int_{0}^\pi\int_{S^2}\sigma(|v - u|)\beta(r - q)\phi_{\varepsilon,v_0}(v)\mu_t(dr,dv)\mu_t(dq,du)Q(d\theta)d\xi\\
        & = \int_{\mathbb{R}^{6}}\int_{0}^\pi\int_{S^2}\sigma(|v - u|)\phi_{\varepsilon,v_0}(v)\int_{\mathbb{R}^6}\beta(r - q)\mu_t(dr,dv)\mu_t(dq,du)Q(d\theta)d\xi\\
        & \leq M\int_{\mathbb{R}^{6}}\int_{0}^\pi\int_{S^2}\sigma(|v - u|)\phi_{\varepsilon,v_0}(v)m_t(dv)m_t(du)Q(d\theta)d\xi\\
        & \leq M\int_{\mathbb{R}^{3}}\int_{0}^\pi\int_{S^2}\int_{\mathbb{R}^3}\sup_{v \in \mathbb{R}^3}\sigma(|v - u|)\phi_{\varepsilon,v_0}(v)m_t(B(v_0,\varepsilon))m_t(du)Q(d\theta)d\xi = 0
        \end{split}
    \end{equation*}
    where we use that $\phi_{v_0,\varepsilon}$ has support within $B(v_0,\varepsilon)$, so that for any fixed $u \in \mathbb{R}^3$, we have that $\sup_{v \in \mathbb{R}^3}\sigma(|v - u|)\phi_{v_0,\varepsilon}(v) < \infty$. Further, bounding $\beta$ above allows one to receive the velocity marginals. As such, we must have that 
    \begin{equation*}
        \int_{\mathbb{R}^6}\int_{0}^\pi\int_{S^2}\sigma(|v - u|)\phi_{\varepsilon,v_0}(v + \alpha(v,u,\theta,\xi))m_t(dv)m_t(du)Q(d\theta)d\xi = 0.
    \end{equation*}
    This implies our result, as $\sigma(|v - u|) \neq 0$ when $u \neq v$, and $\phi_{\varepsilon,0}(v + \alpha(v,u,\theta,\xi)) > 0$ when $v + \alpha(v,u,\theta,\xi) \in B(v_0,\varepsilon)$. 
    \item [Step 3.] We now show that for $t > 0$, if $v_1,v_2 \in \text{Supp}(m_t)$, then we have $\mathcal{S}((v_1 + v_2)/2,|v_1 - v_2|/2) \subset \text{Supp}(m_t)$. We can assume $v_1 \neq v_2$, otherwise $\mathcal{S}((v_1 + v_2)/2,|v_1 - v_2|/2) = \{v_1\}$ and the result is immediate. Observe that $\mathcal{S}((v_1 + v_2)/2,|v_1 - v_2|/2)$ is the closure of $\Delta_{v_1,v_2} \cup \Delta_{v_2,v_1}$ where 
    \begin{equation*}
        \Delta_{v_1,v_2} : = \{v_1 + \alpha(v_1,v_2,\theta,\xi):\xi \in S^2,\theta \in (0,\pi/2)\}
    \end{equation*}
    Since $\text{Supp}(m_t)$ is closed (as it is the complement of a union of open balls of measure zero), it suffices to prove that $\Delta_{v_1,v_2} \cup \Delta_{v_2,v_1} \subset \text{Supp}(m_t)$ as this gives the result by Lemma \ref{fournier-supp}. Take $v_0 \in \Delta_{v_1,v_2}$. Then for $v_0 = v_1 + \alpha(v_1,v_2,\theta_0,\xi_0)$ for some $\theta_0 \in (0,\pi/2)$ and $\xi_0 \in \mathcal{S}^2$. We will now show
    \begin{equation*}
        \int_{\mathbb{R}^3}\int_{\mathbb{R}^3}\int_{\Xi}1_{\{v + \alpha(v,u,\theta,\xi) \in B(v_0,\varepsilon)\}}1_{\{v \neq u,\theta \in (0,\pi/2)\}}d\sigma m_t(du)m_t(dv) > 0.
    \end{equation*}
    Let $\varepsilon > 0$ be given. We examine $B(v_0,\varepsilon)$. For $v$,$u$ and $\xi$ such that we keep them sufficiently close to $v_1$,$v_2$, and $\xi_0$ respectively, we have $v + \alpha(v,u,\theta,\xi) \in B(v_0,\varepsilon)$. As such, we can state that for $\delta > 0$ small, we have the set containment
    \begin{equation*}
        \begin{split}
        &\{v:B(v_1,\delta)\} \times \{u:B(v_2,\delta)\} \times \{\theta \in (a,b): 0 < Q((a,b)) < \infty\} \times \{\xi:B(\xi_0,\delta)\}\\
        & \subset \{v,u,\theta,\xi: v + \alpha(v,u,\theta,\xi) \in B(v_0,\varepsilon)\} \times \{v,u,\theta: v \neq u, \theta \in (0,\pi/2)\}
        \end{split}
    \end{equation*}
    Thus 
    \begin{equation*}
    \begin{split}
        & \int_{\mathbb{R}^3}\int_{\mathbb{R}^3}\int_{\Xi}1_{\{v + \alpha(v,u,\theta,\xi) \in B(v_0,\varepsilon)\}}1_{\{v \neq u,\theta \in (0,\pi/2)\}}d\xi Q(d\theta) m_t(du)m_t(dv)\\
        & \geq \int_{B(v_1,\delta)}\int_{B(v_2,\delta)}\int_{(a,b) \times B(\xi_0,\delta)}d\xi Q(d\theta) m_t(du)m_t(dv) > 0
    \end{split}
    \end{equation*}
    where this lower integral is positive using our corollary before since $v_1$ and $v_2$ are in the support for $t > 0$.   This implies that $m_t(B(v_0,\varepsilon)) > 0$ for all $\varepsilon > 0$ using the contrapositive of step 2, and thus $v_0$ is in the support by our corollary.
    \item [Step 4.] Using Lemma \ref{fournier-supp} and the steps above, we conclude that $\text{Supp}(m_t) = \mathbb{R}^3$ for all $t > 0$.
\end{itemize}
\end{proof}
\noindent \textbf{Distribution Estimates}\\
The support for the marginal distribution is useful for establishing estimates related to the velocity process $V_t$. In attempting to determine estimates on this process, specific geometric properties for arguments become invaluable. Lower bounds on the distribution are needed, and under constraints of keeping velocities separated by a certain degree, they can be obtained. As such, the following result is needed. 
\begin{prop}
\label{geometric prop}
Make the assumptions on $\alpha$,$\beta$ and $\sigma$ as stated in section \ref{assume} for $\gamma \in (0,2)$ and $\nu \in (0,1)$. Set $\mu_t$ to be the solution to the Boltzmann-Enskog equation with $\mu_t = \mathcal{L}(R_t,V_t)$ as defined in Theorem \ref{sund-estimate-thm}. For all $0 < t_0 < t_1$, 
\begin{equation*}
    c_{t_0,t_1} = \inf_{t \in [t_0,t_1],u\in \mathbb{R}^3,\xi \in \mathbb{R}^3}m_t(K(u,\xi)) > 0
\end{equation*}
where
\begin{equation*}
\begin{split}
    K(u,\xi)&  = \{v\in\mathbb{R}^3:|v| \leq 3,
    |v - u| \geq 1,\sigma(|v - u|) \geq m > 0,\\
    & |<v-u,\xi>| \geq |\xi|\}
\end{split}
\end{equation*}
and $m_t$ denotes the marginal distribution of $\mu_t$ for $v$.
\end{prop}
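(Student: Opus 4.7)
The plan is to argue by contradiction, leveraging the full support result $\mathrm{Supp}(m_t) = \mathbb{R}^3$ from the previous proposition together with the weak continuity of $t \mapsto \mu_t$ (hence of $t \mapsto m_t$) noted earlier in the excerpt via Remark 2.7 of \cite{sund}. Since the constraints defining $K(u,\xi)$ depend on $\xi$ only through $\xi/|\xi|$, we may restrict to $\xi \in S^2$. Suppose $c_{t_0,t_1} = 0$ and pick sequences $(t_n,u_n,\xi_n) \in [t_0,t_1]\times\mathbb{R}^3\times S^2$ with $m_{t_n}(K(u_n,\xi_n))\to 0$. By compactness of $[t_0,t_1]\times S^2$, a subsequence yields $t_n\to t_*\in[t_0,t_1]$ and $\xi_n\to\xi_*\in S^2$; a further extraction produces either (A) $u_n\to u_*\in\mathbb{R}^3$, or (B) $|u_n|\to\infty$ with $u_n/|u_n|\to e\in S^2$ and $\langle u_n,\xi_n\rangle\to L\in[-\infty,\infty]$.

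In Case (A), first verify that the open set
\[
K^\circ(u_*,\xi_*) = \{v : |v|<3,\ |v-u_*|>1,\ \sigma(|v-u_*|)>m,\ |\langle v-u_*,\xi_*\rangle|>1\}
\]
is nonempty; this is an elementary geometric check, and works provided $m$ was chosen small enough that the condition $\sigma(|v-u|)>m$ does not exclude every admissible $v$ (e.g.\ $m \le \inf_{|z|\ge 1}\sigma(|z|)$ under the growth hypothesis on $\sigma$). Since the defining inequalities are continuous in $(u,\xi)$, for any relatively compact open $U$ with $\overline{U}\subset K^\circ(u_*,\xi_*)$ one has $U\subset K(u_n,\xi_n)$ for all $n$ sufficiently large. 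The weak continuity $m_{t_n}\Rightarrow m_{t_*}$ together with the Portmanteau theorem gives
\[
\liminf_{n\to\infty} m_{t_n}(K(u_n,\xi_n)) \;\ge\; \liminf_{n\to\infty} m_{t_n}(U) \;\ge\; m_{t_*}(U),
\]
and $m_{t_*}(U)>0$ because $\mathrm{Supp}(m_{t_*}) = \mathbb{R}^3$, contradicting $m_{t_n}(K(u_n,\xi_n))\to 0$.

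Case (B) is the main technical obstacle. For $v\in B(0,3)$ and $n$ large, the conditions $|v-u_n|\ge 1$ and $\sigma(|v-u_n|)\ge m$ hold automatically, since $|v-u_n|\ge|u_n|-3\to\infty$ and $\sigma(|z|)\to\infty$. The projection constraint $|\langle v-u_n,\xi_n\rangle|\ge 1$ converges pointwise in $v$: when $L\in\mathbb{R}$ it passes to $|\langle v,\xi_*\rangle - L|\ge 1$, and when $|L|=\infty$ it is eventually vacuous on $B(0,3)$. In each subcase the limiting admissible set intersected with $\{|v|<3\}$ contains a nonempty open subset of $B(0,3)$ (either a spherical cap or the whole ball), so the Portmanteau-plus-support argument of Case (A) applies verbatim. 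The hard part of the proof is the bookkeeping that confirms each point of this limiting open set is eventually an interior point of $K(u_n,\xi_n)$, which requires a uniform continuity check organized according to the value of $L$ and the direction $e$ obtained from the subsequence.
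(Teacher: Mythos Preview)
Your argument is correct, but the paper takes a shorter and more constructive route that sidesteps the case analysis on unbounded $u_n$ entirely. Instead of arguing by contradiction and extracting subsequences, the paper observes that for \emph{every} pair $(u,\xi)$ there is an explicit point $v_{u,\xi} := -2\,\mathrm{sg}(\langle u,\xi\rangle)\,\xi/|\xi| \in \mathcal{S}(0,2)$ such that the unit ball $\mathcal{B}(v_{u,\xi},1)$ is contained in $K(u,\xi)$ (this is the content of Fournier's Proposition~4.2, plus a check that $\sigma$ stays above some $m>0$ on that ball). This reduces the problem to showing $\inf_{t\in[t_0,t_1],\,x\in\mathcal{S}(0,2)} m_t(\mathcal{B}(x,1))>0$, which follows from the full-support result and a straightforward compactness/continuity argument on the \emph{compact} parameter set $[t_0,t_1]\times\mathcal{S}(0,2)$.

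The payoff of the paper's approach is that the non-compactness of the $u$-variable never enters: the explicit ball absorbs all the $(u,\xi)$-dependence into a single compact family of test sets. Your approach, by contrast, confronts the non-compactness head-on via the dichotomy (A)/(B) and the further extraction of $\langle u_n,\xi_n\rangle\to L\in[-\infty,\infty]$; this works, and your bookkeeping sketch in Case~(B) is sound, but it is noticeably longer and requires the uniform-on-compacta convergence checks you allude to. On the other hand, your method is more robust in that it does not rely on guessing the right center $v_{u,\xi}$, and would adapt more easily if the geometry of $K(u,\xi)$ were altered.
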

\begin{proof}\,
\begin{itemize}
       \item [Step 1.] We first prove that for $0 < t_0 < t_1$, we have $\inf_{t \in [t_0,t_1],x \in \mathcal{S}(0,2)}\mu_t(\mathcal{B}(x,1) \times \mathbb{R}^3) = \inf_{t \in [t_0,t_1],x \in \mathcal{S}(0,2)}m_t(\mathcal{B}(x,1)) > 0$ where $m_t$ denotes the marginal distribution of $\mu_t$ with respect to $v$. Consider then, $\phi \in \text{Lip}_b(\mathbb{R}^3)$ such that $1_{\{\mathcal{B}(0,\frac{1}{2})\}} \leq \phi \leq 1_{\{\mathcal{B}(0,1)\}}$. Define $F(t,x) = \int_{\mathbb{R}^6}\phi(v - x)\mu_t(dv,dr)$. Then by Remark 2.7 in \cite{sund}, $t \to F$ is continuous. Further, using the Lipschitz property of $\phi$, we have $\sup_t|F(t,x) - F(t,y)| \leq C|x - y|$ so that we have continuity in $x$. Since $F(t,x) \geq m_t(B(x,\frac{1}{2}))$ by definition, we deduce $F(t,x) > 0$ for all $t > 0$ and $x \in \mathbb{R}^3$ using the support of the marginal distribution $m_t$. Given $[t_0,t_1]\times \mathcal{S}(0,2)$ is compact and $F$ is continuous in both variables, we have $\inf_{t \in [t_0,t_1],x\in\mathcal{S}(0,2)}F(t,x) > 0$, and thus $\inf_{t \in [t_0,t_1],x\in\mathcal{S}(0,2)} m_t(\mathcal{B}(x,1)) \geq \inf_{t \in [t_0,t_1],x\in\mathcal{S}(0,2)} F(t,x) > 0$.
\item [Step 2.] Now we must check that for $u$ and $\xi \in \mathbb{R}^3$, we can find $v_{w,\xi} \in \mathcal{S}(0,2)$ such that $\mathcal{B}(v_{w,\xi},1) \subset K(u,\xi)$. Using Proposition 4.2 from \cite{four}, this is immediate, barring the condition on $\sigma$. From Fournier, we have $|w - v| \geq 1$ for $v \in \mathcal{B}(v_{w,\xi})$ with $v_{w,\xi} = -2\text{sg}(<w,\xi>)\xi/|\xi|$, which implies $w \neq v$ for any $v$ in the ball. Thus $\sigma(w - v) > 0$ for all $v \in B(v_{w,\xi},1)$. By the continuity of $\sigma$, we have some $m > 0$, such that $\sigma(v - w) \geq m$ for each $v$ in the ball.
\item [Step 3.] By containment from Step 2,
\begin{equation*}
    \inf_{t\in[t_0,t_1]u,\xi \in \mathbb{R}^3}m_t(K(w,\xi)) \geq \inf_{t\in[t_0,t_1],x\in\mathcal{S}(0,2)}m_t(\mathcal{B}(x,1)) > 0
\end{equation*}
where positiveness was given in Step 1.
\end{itemize}
\end{proof}
As the velocity process is driven by conditions on both velocity and position, this estimate must be further refined for the joint distribution. Namely, $\beta$ must keep its distance from zero as well. The established estimate extends to the following lemma. 
\begin{lemma}
There exists some $m > 0$ such that 
\begin{equation*}
    q_{t_0,t_1} : = \inf_{t\in[t_0,t_1]\\u,v_0,r_0}\mu_t(K(u,v_0)\times B^m_\beta(r_0)) > 0
\end{equation*}
where $B^m_\beta(r_0) = \{r \in \mathbb{R}^3:\beta(r - r_0) \geq m > 0\}$
\end{lemma}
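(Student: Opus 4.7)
The plan is to extend the previous proposition, which controlled the velocity-marginal mass of $K(u,v_0)$, to the joint distribution by tacking on the position factor $B^m_\beta(r_0)$. I would follow the same three-step template used there: build an appropriate continuous functional of $\mu_t$, show pointwise positivity from the full-support results, then invoke compactness to promote pointwise positivity to a uniform infimum.

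Concretely, I would first choose $m > 0$ small enough that the superlevel set $\{r : \beta(r) \geq m\}$ contains an open ball $B(0,\rho)$ of some positive radius; this is possible by continuity of $\beta$ and the fact that $\beta$ is non-trivial. By translation, $B^m_\beta(r_0) \supseteq B(r_0,\rho)$ for every $r_0$, so it suffices to lower-bound $\mu_t(K(u,v_0)\times B(r_0,\rho))$. Next, pick Lipschitz bumps $\chi_v \in \mathrm{Lip}_b(\mathbb{R}^3)$ supported in $K(u,v_0)$ (mimicking Step~1 of the preceding proof) and $\chi_r \in \mathrm{Lip}_b(\mathbb{R}^3)$ supported in $B(0,\rho)$, and set
\[
F(t,u,v_0,r_0) \;=\; \int_{\mathbb{R}^6} \chi_v(v)\,\chi_r(r-r_0)\,\mu_t(dr,dv).
\]
Remark~2.7 of \cite{sund}, applied to the test function $(r,v)\mapsto \chi_v(v)\chi_r(r-r_0)$, yields joint continuity of $F$ in all four arguments. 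Pointwise positivity $F>0$ for $t>0$ would then follow by extending the full-support argument of the preceding proposition to the joint distribution using position-dependent test functions; the transport term $v\cdot\nabla_r\phi$ in the weak formulation does not destroy positivity, since it only redistributes the mass in space without annihilating it on open sets.

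The main obstacle is uniformity over the non-compact parameter ranges. For $(u,v_0)$, the scale/translation covariance of $K(u,v_0)$ reduces the infimum to a compact slice, exactly as in the proof of the previous proposition. For $r_0\in\mathbb{R}^3$ the standard compactness argument is not immediately available, and this is where I expect the real work to lie. I would handle it by noting that $\chi_r(\cdot-r_0)$ enters the Boltzmann-Enskog dynamics only through the compactly-supported kernel $\beta$, so the meaningful range of $r_0$ is effectively controlled by the support of the position marginal of $\mu_t$; combined with the transport identity $R_t = R_0 + \int_0^t V_s\,ds$ and the full support of $V_s$ on $\mathbb{R}^3$ at each $t>0$ (from the previous proposition), one should be able to replace the infimum over $r_0\in\mathbb{R}^3$ by one over a compact set on which continuity and pointwise positivity close the argument. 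Making this last reduction rigorous is the crux.
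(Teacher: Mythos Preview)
Your approach is genuinely different from the paper's and, as you yourself flag, has a real gap at the $r_0$-uniformity step. You try to adapt the compactness-and-continuity template of the preceding proposition by building a joint functional $F(t,u,v_0,r_0)$ and then reducing to a compact parameter range. But the reduction you sketch for $r_0$ relies on the position marginal of $\mu_t$ having mass near every $r_0$, and nothing in the paper establishes this; the transport identity together with full support of the velocity marginal does not by itself force full spatial support at a fixed finite time. So the proposal is incomplete at exactly the point you identify, and your proposed fix is not clearly salvageable with the tools on hand.

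The paper takes a much shorter route that sidesteps the $r_0$-compactness issue entirely. It argues by contradiction: assuming no $m$ works, it fixes parameters at which $\mu_t(K(u,v_0)\times B_\beta^m(r_0))=0$ for all $m>0$, then lets $m\downarrow 0$ so that the sets $B_\beta^m(r_0)$ increase, and invokes continuity from below of the measure to obtain $m_t(K(u,v_0))=0$, contradicting the previous proposition. The key device is the monotonicity of $B_\beta^m(r_0)$ in $m$, which reduces the question to the velocity marginal alone; no uniformity or compactness in $r_0$ is ever invoked. What your approach would buy, if the $r_0$ reduction could be made rigorous, is an explicit constructive lower bound rather than a pure existence statement for $m$---but the paper does not need this.
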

\begin{proof}
    Suppose there were not such an $m$. Then we would have, for $t \in [t_0,t_1]$ and $u,v_0,r_0 \in \mathbb{R}^3$,
    \begin{equation*}
        \mu_t(K(u,v_0) \times B_\beta^m(r_0)) = 0.
    \end{equation*}
    for each positive $m$. However, if $\{m_n\}$ is some decreasing sequence of positive real numbers with $m_n \to 0$, then $B_{\beta}^{m_n}(r_0) \to \mathbb{R}^3$ as $n \to \infty$ since $\beta$ is a non-negative function. Thus, using continuity from below, 
    \begin{equation*}
        \lim_{n\to\infty}\mu_t(K(u,v_0) \times B_\beta^{m_n}(r_0)) = \mu_t(K(u,v_0) \times \mathbb{R}^3) = m_t(K(u,v_0))
    \end{equation*}
    but this $m_t(K(u,v_0))$ is positive by the previous lemma. This is a contradiction as we assumed $\mu_t(K(u,v_0) \times B_\beta^{m_n}(r_0)) = 0$ for each $n$. Thus, it must be that there is some positive $m$ such that $\mu_t(K(u,v) \times B_\beta^m(r_0)) > 0$ for all $t \in [t_0,t_1]$.   
\end{proof}
Further, due to the like distribution of $(R_t,V_t)$ and $(u_t,q_t)$, we can take the proposition and remark to give the following remark.
\begin{remark}
By the above,
\begin{equation*}
\inf_{t\in[t_0,t_1],u,\xi,r_0\in\mathbb{R}^3}\eta_t(K(u,\xi)\times\{\beta(r - r_0) \geq m > 0\}) = q_{t_0,t_1} > 0.
\end{equation*}
where $q_{t_0,t_1}$ is some positive constant possibly smaller than $c_{t_0,t_1}$.
\end{remark}
\section{Estimates for the Boltzmann-Enskog process}
\textbf{A Lower Bound}\\
In the previous results, it was sufficient to assume $\int_0^\pi \theta Q(d\theta) < \infty$, without giving an explicit form for $Q$. However, determining estimates related to the process proves somewhat difficult without giving a more concrete definition for $Q(d\theta)$. The estimate established in this section will show a clear need for this behavior of $Q$, as the convergence rates would not be ascertainable without such behavior.\\
Initially, we make some observations relevant to additive processes driven by Poisson random measures. Define $V_t^\varepsilon$ as introduced earlier. Then for $V_t^\varepsilon$ with $\varepsilon \in (0,t \wedge 1)$, there exists a filtration $(\mathcal{F}_t)$ and an adapted Poisson measure $M$ with the same intensity as the $N$ such that 
\begin{equation*}
    V_t^\varepsilon = V_{t-\varepsilon} + \int_{t-\varepsilon}^t\int_{\Xi\times\chi\times\mathbb{R}_+}\hat{\alpha}(V_{t-\varepsilon},R_{t-\varepsilon},u_s(\eta),q_s(\eta),\theta,\xi,z)dM(s,\theta,\xi,\eta,z).
\end{equation*}
When we say the same intensity, we of course mean
\begin{equation*}
    d\hat{M}(s,\theta,\xi,\eta,z) = dsQ(d\theta)d\xi d\eta_s(dq,du) dz.
\end{equation*}
We can further write $V_t^\varepsilon = U_t^\varepsilon + W_t^\varepsilon$ where
\begin{equation*}
    U_t^\varepsilon = \int_{t-\varepsilon}^t\int_{\Xi\times\chi\times\mathbb{R}_+}\hat{\alpha}(V_{t-\varepsilon},R_{t-\varepsilon},u_s(\eta),q_s(\eta),\theta,\xi,z)1_{\{\theta < \varepsilon^{1/\nu}\}}dM(s,\theta,\xi,\eta,z)
\end{equation*}
and 
\begin{equation*}
    W_t^\varepsilon = V_{t-\varepsilon} + \int_{t-\varepsilon}^t\int_{\Xi\times\chi\times\mathbb{R}_+}\hat{\alpha}(V_{t-\varepsilon},R_{t-\varepsilon},u_s(\eta),q_s(\eta),\theta,\xi,z)1_{\{\theta \geq \varepsilon^{1/\nu}\}}dM(s,\theta,\xi,\eta,z)
\end{equation*}
so that $U_t^\varepsilon$ and $W_t^\varepsilon$ are independent conditionally on $\mathcal{F}_{t-\varepsilon}$.\\
Lastly, for $\kappa \in \mathbb{R}^3$, $E[e^{i<\kappa,U_t^\varepsilon>}|\mathcal{F}_{t-\varepsilon}] = \text{exp}(-\psi_{\varepsilon,t,V_{t-\varepsilon},R_{t-\varepsilon}}(\kappa))$ where for $v_0,r_0 \in \mathbb{R}^3$,
\begin{equation}
\label{psi-char}
    \psi_{\varepsilon,t,v_0,r_0}(\kappa) = \int_{t-\varepsilon}^t\int_{\chi}\int_{0}^{\varepsilon^{1/\nu}}\int_{S^2}(1 - e^{i<\kappa,\alpha(v_0,u_s,\theta,\xi)>})\sigma(v_0 - u_s)\beta(r_0 - q_s)d\xi Q(d\theta)d\eta ds
\end{equation}
This is since the approximating process defined above is an additive process conditionally on $\mathcal{F}_{t-\varepsilon}$ as the filtration concerns the jumps, with the integrator $M$ allowed due to the jacobian 1 transformation $\xi_0$. This additive property can be seen when conditioning $V^\varepsilon_t - V_{t-\varepsilon} - V^\varepsilon_s + V_{s-\varepsilon}$ with respect to the filtration $(\mathcal{F}_t)_t$ generated by the time differences of the Poisson random measure $M$ up to time $t - \varepsilon$. As the integral concerns only time differences in $(s - \varepsilon,s)$ and $(t-\varepsilon,t)$, we conclude it has independent increments. Stochastic continuity of paths is due to the expectation bounds on time differences established in calculations made in the proof of Proposition \ref{approx-def}. In fact, 
\begin{equation}
\label{stoch-cty}
E|V_t^\varepsilon - V_{t-\varepsilon} - V_s^\varepsilon - V_{s-\varepsilon}| \leq O(t - s)
\end{equation}
 This is rigorously shown, with much of the effort being eased by the expectation estimates established in Proposition \ref{approx-def}. 
\begin{lemma}
The approximating process $V_t^\varepsilon$ and the frozen time integral,
\begin{equation*}
    \int_{t-\varepsilon}^t\int_{\Xi\times\chi\times\mathbb{R}_+}\hat{\alpha}(V_{t-\varepsilon},R_{t-\varepsilon},u_s(\eta),q_s(\eta),\theta,\xi,z)dM(s,\theta,\xi,\eta,z)
\end{equation*}
are stochastically continuous processes.
\end{lemma}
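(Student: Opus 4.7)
The plan is to establish $E|V^\varepsilon_{t+h} - V^\varepsilon_t| \to 0$ and $E|I^\varepsilon_{t+h} - I^\varepsilon_t| \to 0$ as $h \to 0$, where $I^\varepsilon_t$ denotes the frozen-time integral. Stochastic continuity then follows from Markov's inequality. Writing $V_t^\varepsilon = V_{t-\varepsilon} + I_t^\varepsilon$, the first summand is controlled directly by the Lipschitz-in-time bound (\ref{lipschitz-in-time}) established in Step 1 of Proposition \ref{approx-def}, which gives $E|V_{t+h-\varepsilon} - V_{t-\varepsilon}| \leq K_t h$, so the task reduces to controlling $E|I^\varepsilon_{t+h} - I^\varepsilon_t|$, from which stochastic continuity of both processes will follow simultaneously.

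For the difference $I^\varepsilon_{t+h} - I^\varepsilon_t$, I would partition the region of integration into three disjoint time slices: the outgoing slice $[t-\varepsilon, t+h-\varepsilon]$ (active only for $I^\varepsilon_t$), the overlap slice $[t+h-\varepsilon, t]$ (active for both, with different frozen values), and the incoming slice $[t, t+h]$ (active only for $I^\varepsilon_{t+h}$). On the two boundary slices of length $h$, only one of the integrals contributes; bringing absolute values inside, passing to the compensator $d\hat{M}$, and applying (\ref{alpha-lip}), (\ref{sigma-gamma}), and the moment bounds of Theorem \ref{sund-estimate-thm} produces an $O(h)$ contribution in $L^1(P)$, exactly as in the calculation of $E|V_t - V_u|$ carried out in Step 1 of Proposition \ref{approx-def}.

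The main obstacle is the overlap slice $[t+h-\varepsilon, t]$, where both integrals are active but with different frozen values. There the integrand is the difference
\begin{equation*}
\hat\alpha(V_{t+h-\varepsilon},R_{t+h-\varepsilon},u_s,q_s,\theta,\xi,z) - \hat\alpha(V_{t-\varepsilon},R_{t-\varepsilon},u_s,q_s,\theta,\xi,z),
\end{equation*}
which decomposes, in complete analogy with the $B_1 + B_2 + B_3$ split carried out in Proposition \ref{approx-def}, into terms controlled respectively by (\ref{alph-cont}) (for the pure-$\alpha$ piece), the $\gamma$-Hölder bound (\ref{sigma-lip}) (for the $\sigma$-piece), and the Lipschitz regularity of $\beta \in C^1_c(\R^3)$ (for the $\beta$-piece). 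Reusing the moment estimates of Theorem \ref{sund-estimate-thm} together with (\ref{lipschitz-in-time}) and its position analogue $E|R_{t+h-\varepsilon} - R_{t-\varepsilon}| \leq \int_{t-\varepsilon}^{t+h-\varepsilon} E|V_s|\,ds = O(h)$ yields an overlap contribution bounded by $C_{t,\varepsilon} h$.

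Summing the three slice contributions gives $E|I^\varepsilon_{t+h} - I^\varepsilon_t| = O(h)$, which is precisely \eqref{stoch-cty}; stochastic continuity of the frozen-time integral is then immediate, and stochastic continuity of $V^\varepsilon_t = V_{t-\varepsilon} + I^\varepsilon_t$ follows as the sum of two stochastically continuous processes. The heart of the argument is the overlap-slice bound, but this is precisely where the joint space-velocity Lipschitz machinery already developed in Steps 2--4 of Proposition \ref{approx-def} can be reused with $V_s - V_{t-\varepsilon}$ and $R_s - R_{t-\varepsilon}$ replaced by $V_{t+h-\varepsilon} - V_{t-\varepsilon}$ and $R_{t+h-\varepsilon} - R_{t-\varepsilon}$ respectively.
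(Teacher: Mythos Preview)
Your proposal is correct and follows essentially the same route as the paper: the same three-slice partition of $[t-\varepsilon,t+h]$ into an outgoing piece, an overlap piece, and an incoming piece; the same use of the Step~1 Lipschitz-in-time estimates for the boundary slices; and the same reduction of the overlap slice to separate control of the $\alpha$-, $\sigma$-, and $\beta$-differences via the machinery of Proposition~\ref{approx-def}. The only cosmetic difference is that the paper labels the overlap decomposition $J_\beta+J_\sigma+J_\alpha$ (a telescoping split after integrating out $z$) rather than invoking the $B_1+B_2+B_3$ split verbatim, but the required inputs and resulting bounds are identical.
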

\begin{proof}
Firstly, let $|t - s| < \varepsilon$ so that $s > t-\varepsilon$. Consider
\begin{equation}
\label{C-t-s}
\begin{split}
    C(t,s) := &E\left|\int_{t-\varepsilon}^t\int_{\Xi\times\chi\times\mathbb{R}_+}\hat{\alpha}(V_{t-\varepsilon},R_{t-\varepsilon},u_r,q_r,\theta,\xi,z)dM(r,\theta,\xi,\eta,z)\right.\\
    &\left.-\int_{s-\varepsilon}^s\int_{\Xi\times\chi\times\mathbb{R}_+}\hat{\alpha}(V_{s-\varepsilon},R_{s-\varepsilon},u_r,q_r,\theta,\xi,z)dM(r,\theta,\xi,\eta,z)\right| 
\end{split}
\end{equation}
By the triangle inequality, one has 
\begin{equation}
\label{C-t-s 123 bound}
    C(t,s) \leq I_1 + I_2 + I_3
\end{equation}
The $I_k$ are defined as 
\begin{equation}
\label{I1}
    I_1 := E\left|\int_{s}^t\int_{\Xi\times\chi\times\mathbb{R}_+}\hat{\alpha}(V_{t-\varepsilon},R_{t-\varepsilon},u_r,q_r,\theta,\xi,z) dM(r,\theta,\xi,\eta,z)\right|
\end{equation}
\begin{align}
\label{I2}
    I_2 := E\left|\int_{t-\varepsilon}^s\int_{\Xi\times\chi\times\mathbb{R}_+}\hat{\alpha}(V_{t-\varepsilon},R_{t-\varepsilon},u_r,q_r,\theta,\xi,z) - \hat{\alpha}(V_{s-\varepsilon},R_{s-\varepsilon},u_r,q_r,\theta,\xi,z) dM(r,\theta,\xi,\eta,z)\right|
\end{align}
\begin{equation}
    \label{I3}
   I_3 := E\left|\int_{s-\varepsilon}^{t - \varepsilon}\int_{\Xi\times\chi\times\mathbb{R}_+}\hat{\alpha}(V_{s-\varepsilon},R_{s-\varepsilon},u_r,q_r,\theta,\xi,z) dM(r,\theta,\xi,\eta,z)\right|
\end{equation}
For (\ref{I1}) and (\ref{I3}), the work to establish (\ref{lipschitz-in-time}) from Proposition \ref{approx-def} can be redone to establish 
\begin{equation}
    \label{I1-I3-bound}
    I_1 + I_3 \leq K_t(t - s)
\end{equation}
using the boundedness of $\beta$, and the finite values of $E|V_{t-\varepsilon}|^{1 + \gamma}$ and $E|V_{s - \varepsilon}|^{1 + \gamma}$. The more difficult task is to obtain similar bounds for $I_2$ as defined in (\ref{I2}). Write
\begin{equation}
    \label{I2 j bound}
    I_2 \leq J_\beta + J_\sigma + J_\alpha
\end{equation}
where the $J$ terms, given below, denote integral terms dealing with differences on $\beta$, $\sigma$ and $\alpha$ respectively. Define
\begin{equation}
    \label{Jbeta}
    J_\beta := \int_{t-\varepsilon}^s\int_{\Xi\times\chi}E\left[|\beta(R_{t-\varepsilon} - q_r) - \beta(R_{s-\varepsilon} - q_r)||\alpha(V_{t-\varepsilon},u_r,\theta,\xi)\sigma(V_{t - \varepsilon} - u_s)|\right]d\eta Q(d\theta) d\xi dr
\end{equation}
Applying the Cauchy-Schwarz inequality, Lipschitz property of $\beta$, and (\ref{R-lip}) in (\ref{Jbeta}) gives
\begin{equation}
    \label{final jbeta bound}
    J_\beta \leq O(t - s).
\end{equation}
Notice that the $\varepsilon$ vanishes upon utilizing (\ref{R-lip}). Next, is to examine $J_\sigma$. Recall that $\beta$ is bounded, so that we can instead bound 
\begin{equation}
    \label{Jsigma}
    J_\sigma := \int_{t-\varepsilon}^s\int_{\Xi\times\chi}E\left[|\alpha(V_{t-\varepsilon},u_r,\theta,\xi)||\sigma(V_{t - \varepsilon} - u_s) - \sigma(V_{s-\varepsilon})|\right]d\eta Q(d\theta) d\xi dr
\end{equation}
As in steps 3 and 4 of the proof of Proposition \ref{approx-def}, we separate by the $\gamma \in (0,1]$ case and the $\gamma \in (1,2)$ case. For $\gamma \in (0,1]$, it is sufficient to utilize H\"{o}lder continuity and obtain
\begin{equation}
    \label{jsigma 1}
    J_\sigma \leq \int_{t-\varepsilon}^s\int_{\Xi\times\chi}E\left[|\alpha(V_{t-\varepsilon},u_r,\theta,\xi)||V_{t - \varepsilon} - V_{s-\varepsilon}|^\gamma\right]d\eta Q(d\theta) d\xi dr
\end{equation}
By the H\"{o}lder inequality with $p = 1 + \gamma$ and $q = \frac{1 + \gamma}{\gamma}$ and (\ref{vt-vu 1 + gamma}), it follows that $J_\sigma \leq O(t-s)$. For the $\gamma \in (1,2)$, we must instead use 
\begin{equation}
    \label{jsigma 2}
    J_\sigma \leq \int_{t-\varepsilon}^s\int_{\Xi\times\chi}E\left[|\alpha(V_{t-\varepsilon},u_r,\theta,\xi)||V_{t - \varepsilon} - V_{s-\varepsilon}|(|V_{t-\varepsilon} - u_r|^{\gamma - 1} + |V_{s-\varepsilon} - u_r|^{\gamma - 1})\right]d\eta Q(d\theta) d\xi dr
\end{equation}
Applying (\ref{alpha-lip}) to bound $\alpha$, we need to estimate
\begin{equation*}
    E[|V_{t-\varepsilon} - V_{s-\varepsilon}|(|V_{t - \varepsilon} - u_r|^\gamma + |V_{t-\varepsilon} - u_r||V_{s-\varepsilon} - u_r|^{\gamma - 1}]
\end{equation*}
which can be separated as
\begin{equation*}
    E[|V_{t-\varepsilon} - V_{s-\varepsilon}||V_{t - \varepsilon} - u_r|^\gamma] + E[|V_{t-\varepsilon} - V_{s-\varepsilon}||V_{t-\varepsilon} - u_r||V_{s-\varepsilon} - u_r|^{\gamma - 1}]
\end{equation*}
By the H\"{o}lder inequality with $p = 1 + \gamma$ and $q = \frac{1 + \gamma}{\gamma}$ and (\ref{vt-vu 1 + gamma}), it follows that $J_\sigma \leq O(t-s)$. In total, whether using the initial bound of (\ref{jsigma 1}) for $\gamma \in (0,1]$ or the initial bound of (\ref{jsigma 2}) for $\gamma \in (1,2)$, 
\begin{equation}
\label{jsigma final bound}
    J_\sigma \leq O(t-s)
\end{equation}
Finally examine $J_\alpha$, defined as 
\begin{equation}
    \label{jalpha-initial}
    J_\alpha := E\left[\left|\int_{t-\varepsilon}^s\int_{\Xi\times\chi}E\left[(\alpha(V_{t-\varepsilon},u_r,\theta,\xi) - \alpha(V_{s-\varepsilon},u_r,\theta,\xi))\sigma(V_{s-\varepsilon} - u_r)|\right]d\eta Q(d\theta) d\xi dr\right|\right]
\end{equation}
We use the jacobian 1 transform in Lemma \ref{xi-lemma} to the first $\alpha$ function in (\ref{jalpha-initial}). Bringing in the absolute value to the integrand gives
\begin{equation}
    \label{jalpha}
    J_\alpha \leq \int_{t-\varepsilon}^s\int_{\Xi\times\chi}E\left[|\alpha(V_{t-\varepsilon},u_r,\theta,\xi + \xi_0) - \alpha(V_{s-\varepsilon},u_r,\theta,\xi)|\sigma(V_{s-\varepsilon} - u_r)|\right]d\eta Q(d\theta) d\xi dr
\end{equation}
 Applying (\ref{alph-cont}) and (\ref{sigmaBound}) shows
 \begin{equation}
     \label{jalpha1}
     J_\alpha \leq E\int_{t-\varepsilon}^s\int_{\Xi\times\chi}\frac{1}{2}\theta E|V_{t-\varepsilon} - V_{s-\varepsilon}| + E[|V_{t-\varepsilon} - V_{s-\varepsilon}||V_{s-\varepsilon} - u_r|^\gamma]d\eta Q(d\theta) d\xi dr
 \end{equation}
 From (\ref{vt - vu ineq}), we have $E|V_{t-\varepsilon} - V_{s-\varepsilon}| \leq K(t-s)$ for some constant. Using the H\"{o}lder inequality on the second expectation with $p = 1 + \gamma$ and $q = \frac{1 + \gamma}{\gamma}$ once more gives it to be a term bounded by some constant multiple of $(t - s)$ by (\ref{vt-vu 1 + gamma}). Thus, it can be stated that 
 \begin{equation}
     \label{jalpha final bound}
     J_\alpha \leq O(t-s)
 \end{equation}
 Gathering the bounds (\ref{final jbeta bound}) - (\ref{jalpha final bound}) on the $J$ terms (\ref{Jbeta}) - (\ref{jalpha}), and recalling (\ref{I2 j bound}) gives 
 \begin{equation}
     \label{I2 order}
     I_2 \leq O(t-s)
 \end{equation}
 for some term $O(t-s)$ which goes to zero as $t \to s$. Collecting (\ref{I1-I3-bound}) and (\ref{I2 order}), and recalling (\ref{C-t-s 123 bound}) gives
 \begin{equation}
     \label{c-t-s order}
     C(t,s) \leq O(t-s)
 \end{equation}
 The definition of $C(t,s)$ given in (\ref{C-t-s}) shows (\ref{stoch-cty}) as desired. Using the Markov inequality along with (\ref{c-t-s order}) gives the stochastic continuity of the frozen time integral in the approximating process. Namely, for any $\varepsilon' > 0$, 
 \begin{equation*}
 P(|V_t^\varepsilon - V_{t-\varepsilon} - V_s^\varepsilon + V_{s-\varepsilon}| > \varepsilon') \leq \frac{C(t,s)}{\varepsilon'} \leq \frac{O(t-s)}{\varepsilon'} \to 0
 \end{equation*}
 as $t \to s$. This in turn gives stochastic continuity of the approximating process itself as well, as if one defines $I_t := V_t^\varepsilon - V_{t-\varepsilon}$, the inequality $E|V_t^\varepsilon - V_s^\varepsilon| \leq E|V_{t-\varepsilon} - V_{s-\varepsilon}| + E|I_t - I_s|$ implies $E|V_t^\varepsilon - V_s^\varepsilon| \leq O(t-s)$ which in turn gives stochastic continuity by the Markov inequality.
 \end{proof}
With the verification that the integral involved in $V_t^\varepsilon$ is additive, its law can be studied using the L\'{e}vy-Khintchine formula for additive processes (see Chapter 2, Section 2 of \cite{jacod}). The following bound is established in relation to the approximating process's transform.
\begin{lemma}
\label{Lower-BD}
Define $\psi_{\varepsilon,t,v_0,r_0}$ as in (\ref{psi-char}). Then, for all $\kappa \in \mathbb{R}^3$ and all $0 < t_0 \leq t - \varepsilon \leq t \leq t_1$, with $\varepsilon \in (0,1)$,
\begin{equation*}
    \text{Re}\,\psi_{\varepsilon,t,v_0,r_0}(\varepsilon^{-1/\nu}\kappa) \geq Cq_{t_0,t_1}\left[|\kappa|^2 \wedge |\kappa|^\nu\right]
\end{equation*}
for $\gamma \in (0,2)$.
\end{lemma}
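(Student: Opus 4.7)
The plan is to reduce the claim to a single-variable angular estimate by restricting the $d\eta$-integration in \eqref{psi-char} to the ``good set'' isolated in the final Remark of Section~4, and to extract the $|\kappa|^2 \wedge |\kappa|^\nu$ scaling via the substitution $\theta = \varepsilon^{1/\nu}\phi$, which converts the $\theta^{-1-\nu}$ singularity of $Q$ into the required $\varepsilon^{-1}$ prefactor. Fix any $\xi_\kappa \in \mathbb{R}^3$ orthogonal to $\kappa$ with $|\xi_\kappa| = |\kappa|$, and restrict the $(q_s, u_s)$ integration to $K(v_0, \xi_\kappa) \times B_\beta^m(r_0)$, which has $\eta_s$-mass at least $q_{t_0, t_1}$ uniformly in $s \in [t_0, t_1]$ by the Remark. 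On this set $\sigma\beta \geq m^2$, $|u_s - v_0| \geq 1$, and, crucially, $|\langle u_s - v_0, \xi_\kappa\rangle| \geq |\kappa|$. Since $\xi_\kappa \perp \kappa$, this last inequality forces $\zeta := u_s - v_0$ to have a component of length at least $1$ perpendicular to $\kappa$; a short trigonometric calculation then gives the key geometric bound $|\kappa_\perp||\zeta| \geq |\kappa|$, where $\kappa_\perp$ denotes the component of $\kappa$ orthogonal to $\zeta$.

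Writing $R := |\kappa_\perp||\zeta| \geq |\kappa|$ and performing the outer $s$-integration (which yields a factor $\varepsilon$), the claim reduces to proving
\[
\int_0^{\varepsilon^{1/\nu}}\!\!\int_{S^1}\!\bigl(1-\cos\langle\varepsilon^{-1/\nu}\kappa,\,\alpha(v_0, u, \theta, \xi)\rangle\bigr)\,d\xi\,Q(d\theta) \ \geq\ C\varepsilon^{-1}\bigl(R^2 \wedge R^\nu\bigr),
\]
since $t \mapsto t^2 \wedge t^\nu$ is non-decreasing on $[0, \infty)$. Rescale $\theta = \varepsilon^{1/\nu}\phi$ so that $Q(d\theta) \geq c\varepsilon^{-1}\phi^{-1-\nu}d\phi$ on $\phi \in (0,1)$, and use $\sin(\varepsilon^{1/\nu}\phi) = \varepsilon^{1/\nu}\phi + O(\varepsilon^{3/\nu})$ to see that, modulo an error of size $O(\varepsilon^{1/\nu}|\kappa||\zeta|)$ coming from the $\sin^2(\theta/2)(u-v_0)$ summand, the cosine's argument is $(\phi/2)\langle\kappa, \Gamma(\zeta, \xi)\rangle$. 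Because $\Gamma(\zeta, \cdot)$ parameterizes the circle of radius $|\zeta|$ in $\zeta^\perp$, the inner product $\langle \kappa, \Gamma(\zeta, \xi)\rangle$ has amplitude exactly $|\kappa_\perp||\zeta| = R$ as $\xi$ sweeps $S^1$, so applying $1-\cos x \geq C(x^2 \wedge 1)$ reduces matters to the one-dimensional estimate
\[
\int_0^1\!\!\int_{S^1}\min\!\bigl((\phi R\cos\xi)^2,\,1\bigr)\,d\xi\,\phi^{-1-\nu}\,d\phi \ \gtrsim \ R^2 \wedge R^\nu.
\]
This is handled by splitting into $R \leq 1$ (direct Taylor expansion, using $\int_0^1\phi^{1-\nu}d\phi < \infty$) and $R \geq 1$ (after the rescaling $\phi' = R\phi$, recognize a finite positive absolute constant $\int_0^\infty\!\int_{S^1}\min((\phi'\cos\xi)^2, 1)\,d\xi\,(\phi')^{-1-\nu}d\phi'$, with the Jacobian producing the $R^\nu$ factor).

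The main obstacle I anticipate is controlling the $O(\varepsilon^{1/\nu}|\kappa||\zeta|)$ perturbation from the $\sin^2$ summand of $\alpha$ in the regime where $|\kappa|$ is large: one cannot Taylor-expand then, and the perturbation may be comparable to the dominant $(\phi/2)\langle\kappa, \Gamma(\zeta, \xi)\rangle$ term. The resolution I have in mind is that this perturbation is independent of $\xi$, so it only shifts the phase of the cosine and does not destroy the positivity or the scaling of the angular integral; quantifying this carefully, combined with crude $1-\cos \geq 0$ bounds on an exceptional $\xi$-set of small measure, should close the estimate.
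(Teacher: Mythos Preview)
Your proposal is correct and follows the paper's strategy closely; the skeleton (restrict to the good set from Section~4, isolate the $\Gamma$-term inside the cosine, convert the $\theta^{-1-\nu}$ singularity into $|\kappa|^2\wedge|\kappa|^\nu$) is the same. Two points of comparison are worth noting.

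First, the paper does not fix a single $\xi_\kappa\perp\kappa$ in advance. Instead it carries the full $\xi$-integral through, then invokes Fournier's symmetry identity $\int_{S^1}F(\langle\kappa,\Gamma(\zeta,\xi)\rangle)\,d\xi=\int_{S^1}F(\langle\zeta,\Gamma(\kappa,\xi)\rangle)\,d\xi$ to swap the roles of $\kappa$ and $\zeta=u_s-v_0$, after which it restricts $(u_s,q_s)$ to the $\xi$-dependent set $K(v_0,\Gamma(\kappa,\xi))\times B^m_\beta(r_0)$ and uses the uniform lower bound $q_{t_0,t_1}$. Your route bypasses this lemma: choosing one $\xi_\kappa\perp\kappa$ and using $|\kappa_\perp||\zeta|=|\kappa\times\zeta|=|\kappa|\,|\zeta_\perp^{\,\kappa}|\geq|\kappa|$ is a direct geometric substitute that achieves the same lower bound on the amplitude of $\langle\kappa,\Gamma(\zeta,\xi)\rangle$. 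This is slightly more elementary (no auxiliary lemma needed) at the cost of being a little less symmetric.

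Second, your ``main obstacle'' is already handled in the paper exactly as you suspect, and it is cleaner than you anticipate. Writing $\langle\varepsilon^{-1/\nu}\kappa,\alpha\rangle=A+B(\xi)$ with $A$ the $\xi$-independent $\sin^2(\theta/2)$ contribution and $B(\xi)=c\,\langle\kappa,\Gamma(\zeta,\xi)\rangle$, the paper expands $\cos(A+B)$, observes that $\int_{S^1}\sin B(\xi)\,d\xi=0$ by odd symmetry of $\sin(c\cos(\cdot))$, and then bounds $1-\cos A\cos B(\xi)\geq 1-|\cos B(\xi)|$. No exceptional-set argument or Taylor expansion of the perturbation is required; the phase $A$ simply drops out. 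From there the paper uses $1-|\cos x|\geq x^2/4$ on $|x|\leq 1$ and integrates $\theta$ directly rather than substituting $\theta=\varepsilon^{1/\nu}\phi$, but the two computations are equivalent.
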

\begin{proof}
Note by definition,
\begin{equation*}
\begin{split}
    \text{Re}\,\psi_{\varepsilon,t,v_0,r_0}(\varepsilon^{-1/\nu}\kappa) & = \int_{t-\varepsilon}^t\int_{\chi}\int_{0}^{\varepsilon^{1/\nu}}\int_{S^2}(1 - \cos(\varepsilon^{-1/\nu}<\kappa,\alpha(v_0,u_s,\theta,\xi)>)\\&\sigma(v_0 - u_s)\beta(r_0 - q_s) d\xi b(\theta)d\theta d\eta ds
\end{split}
\end{equation*}
Now we focus on the $S^2$ integral.
\begin{equation*}
    \int_{S^2}1 - \cos(\varepsilon^{-1/\nu}<\kappa,\alpha(v_0,u_s,\theta,\xi)>)d\xi
\end{equation*}
Using \eqref{alpha-defn} and a trigonometric identity allows us to write
\begin{equation*}
    \alpha(v_0,u_s,\theta,\xi) = \frac{\cos(\theta) - 1}{2}(v_0 - u_s) + \frac{\sin(\theta)}{2}\Gamma(v_0 - u_s,\xi).
\end{equation*}
By a sum-to-product identity, we have 
\begin{equation*}
    \begin{split}
    & \int_{S^2}1 - \cos(\varepsilon^{-1/\nu}\left<\kappa,\alpha(v_0,u_s,\theta,\xi)\right>)d\xi\\
    & = \int_{S^2}1 - \cos(\varepsilon^{-1/\nu}(\cos(\theta) - 1)\left<\kappa,v_0 - u_s\right>/2)\cos(\varepsilon^{-1/\nu}\sin(\theta)\left<\kappa,\Gamma(v_0 - u_s,\xi)\right>/2)\\
    & + \sin(\varepsilon^{-1/\nu}(\cos(\theta) - 1)\left<\kappa,v_0 - u_s\right>/2)\sin(\varepsilon^{-1/\nu}\sin(\theta)\left<\kappa,\Gamma(v_0 - u_s,\xi)\right>/2)d\xi
\end{split}
\end{equation*}
As $\xi$ is in $S^2$, there exists an angle $\phi \in [0,2\pi)$ for which $\xi = (\cos(\phi),\sin(\phi))$. This means we can consider the above as a one-dimensional integration over an angle $\phi$ rather than over a multi-dimensional space. With respect to $\phi$, the argument of the second sine function is $2\pi$ periodic, thus the above integral is equivalent to
\begin{equation}
\label{1-cos}
    \int_{S^2}1 - \cos(\varepsilon^{-1/\nu}(\cos(\theta) - 1)\left<\kappa,v_0 - u_s\right>/2)\cos(\varepsilon^{-1/\nu}\sin(\theta)\left<\kappa,\Gamma(v_0 - u_s,\xi)\right>/2)d\xi
\end{equation}
Now we take a lower bound. Since $|\cos(x)| \leq 1$, the above is always positive, and subtracting $|\cos(x)|$ instead of $\cos(x)$ will maintain positivity. Thus (\ref{1-cos}) is greater than or equal to
\begin{equation}
    \label{1-|cos|}
        \int_{S^2}1 - |\cos(\varepsilon^{-1/\nu}(\cos(\theta) - 1)\left<\kappa,v_0 - u_s\right>/2)\cos(\varepsilon^{-1/\nu}\sin(\theta)\left<\kappa,\Gamma(v_0 - u_s,\xi)\right>/2)|d\xi
\end{equation}
and this lower bound is still positive. Further, this bound is greater than
\begin{equation*}
        \int_{S^2}1 - |\cos(\varepsilon^{-1/\nu}\sin(\theta)\left<\kappa,\Gamma(v_0 - u_s,\xi)\right>/2)|d\xi
\end{equation*}
as we maximize one of the cosine functions. Now, noting that $1 - |\cos(x)| \geq \frac{x^2}{4}$ and $|\sin(x)| \geq \frac{|x|}{2}$ for $x \in [-1,1]$ and since $|\sin(x)| \leq |x|$ for all $x \in \mathbb{R}$, we have
\begin{equation*}
\begin{split}
        & \int_{S^2}1 - |\cos(\varepsilon^{-1/\nu}\sin(\theta)\left<\kappa,\Gamma(v_0 - u_s,\xi)\right>/2)|d\xi\\
        & \geq \int_{S^2} \frac{\varepsilon^{-2/\nu}\sin^2(\theta)\inprod{\kappa}{\Gamma(v_0 - u_s,\xi)}^2}{16}1_{\{|\inprod{\kappa}{\Gamma(v_0-u_s,\xi)}| \sin(\theta)\leq 2\varepsilon^{1/\nu}\}}d\xi\\
        & \geq \int_{S^2} \frac{\varepsilon^{-2/\nu}\theta^2\inprod{\kappa}{\Gamma(v_0 - u_s,\xi)}^2}{64}1_{\{|\theta| \leq 2\varepsilon^{1/\nu}/|\inprod{\kappa}{\Gamma(v_0 - u_s,\xi)}|\}}d\xi\\
\end{split}
\end{equation*}
Thus, we have the following lower bound.
\begin{equation*}
     \begin{split}
     &\text{Re}\,\psi_{\varepsilon,t,v_0,r_0}(\varepsilon^{-1/\nu}\kappa) \geq \int_{t-\varepsilon}^t\int_{\chi}\int_{0}^{\varepsilon^{1/\nu}}\int_{S^2}\frac{\varepsilon^{-2/\nu}\theta^2\inprod{\kappa}{\Gamma(v_0 - u_s,\xi)}^2}{64}\\
     & \times 1_{\{|\theta| \leq 2\varepsilon^{1/\nu}/|\inprod{\kappa}{\Gamma(v_0 - u_s,\xi)}^2|\}}\sigma(v_0 - u_s)\beta(r_0 - q_s) d\xi b(\theta)d\theta d\eta ds\\
     & = \frac{c}{\varepsilon^{2/\nu}} \int_{t-\varepsilon}^t\int_{\chi}\int_{0}^{\varepsilon^{1/\nu}}\int_{S^2}\theta^2\inprod{\kappa}{\Gamma(v_0 - u_s,\xi)}^2\\
     & \times 1_{\{|\theta| \leq 2\varepsilon^{1/\nu}/|\inprod{\kappa}{\Gamma(v_0 - u_s,\xi)}|\}}\sigma(v_0 - u_s)\beta(r_0 - q_s) d\xi b(\theta)d\theta d\eta
    \end{split}
\end{equation*}
We further bound below by examining the $Q(d\theta)$ integral. Namely, 
\begin{equation*}
    \int_0^{\varepsilon^{1/\nu}}\theta^21_{\{|\theta| \leq 2\varepsilon^{1/\nu}/|\inprod{\kappa}{\Gamma(v_0 - u_s,\xi)}|\}}b(\theta)d\theta .
\end{equation*}
is larger than 
\begin{equation*}
    c_0\int_0^{\varepsilon^{1/\nu}}\theta^{1-\nu} 1_{\{|\theta| \leq 2\varepsilon^{1/\nu}/|\inprod{\kappa}{\Gamma(v_0 - u_s,\xi)}|\}} d \theta
\end{equation*}
which is equivalent to
\begin{equation*}
    c_0\left[\varepsilon^{1/\nu} \wedge \frac{2\varepsilon^{1/\nu}}{|\inprod{\kappa}{\Gamma(v_0 - u_s,\xi)}|}\right]^{2-\nu}
\end{equation*}
So, we have
\begin{equation*}
    \begin{split}
        & \frac{c}{\varepsilon^{2/\nu}} \int_{t-\varepsilon}^t\int_{\chi}\int_{0}^{\varepsilon^{1/\nu}}\int_{S^2}\theta^2\inprod{\kappa}{\Gamma(v_0 - u_s,\xi)}^2\\
     & \times 1_{\{|\theta| \leq 2\varepsilon^{1/\nu}/|\inprod{\kappa}{\Gamma(v_0 - u_s,\xi)}|\}}\sigma(v_0 - u_s)\beta(r_0 - q_s) d\xi b(\theta)d\theta d\eta_s ds\\
     & \geq \frac{c}{\varepsilon^{2/\nu}} \int_{t-\varepsilon}^t\int_{\chi}\int_{S^2}\left[\varepsilon^{1/\nu} \wedge \frac{2\varepsilon^{1/\nu}}{|\inprod{\kappa}{\Gamma(v_0 - u_s,\xi)}|}\right]^{2-\nu}\inprod{\kappa}{\Gamma(v_0 - u_s,\xi)}^2\\
     & \times \sigma(v_0 - u_s)\beta(r_0 - q_s) d\xi d\eta_s ds\\
     & = \frac{c}{\varepsilon} \int_{t-\varepsilon}^t\int_{\chi}\int_{S^2}\left[|\inprod{\kappa}{\Gamma(v_0 - u_s,\xi)}|^2 \wedge |\inprod{\kappa}{\Gamma(v_0 - u_s,\xi)}|^\nu\right]\\
     & \times \sigma(v_0 - u_s)\beta(r_0 - q_s) d\xi d\eta_s ds
    \end{split}
\end{equation*}
We once more note that our $S^2$ integral can be considered as an integral over $\phi \in [0,2\pi)$ so that we can apply the parameterization of Fournier \cite{four}. We briefly state this parameterization. For $X \in \mathbb{R}^3 \setminus \{0\}$, we are able to define $I(X)$ and $J(X)$ so that $\left(\frac{X}{|X|},\frac{I(X)}{|X|},\frac{J(X)}{|X|}\right)$ is an orthonormal basis in such a way that $X \to (I(X),J(X))$ is measurable. Set $I(0) = J(0) = 0$. For $X,v,v_* \in \mathbb{R}^3$, $\theta \in [0,\pi)$, and $\phi \in [0,2\pi)$, we set 
\begin{equation}
\label{four-paramaterization}
    \begin{cases}
        &\Gamma(X,\phi) := (\cos \phi) I(X) + (\sin \phi)J(X),\\
        &v'(v,v_*,\theta,\phi) := v - \frac{1 - \cos\theta}{2}(v - v_*) + \frac{\sin\theta}{2}\\
        &\alpha(v,v_*,\theta,\phi) := v'(v,v_*,\theta,\phi) - v
    \end{cases}
\end{equation}
The utility of the parameterization (\ref{four-paramaterization}) is that we can use some symmetry arguments in integration. The following lemma is Remark 3.1 in \cite{four}, where it is proven succinctly.
\begin{lemma}
    For any measurable non-negative function $F: \mathbb{R} \to \mathbb{R}$, and $X,Y \in \mathbb{R}^3$,
    \begin{equation}
    \label{fournier-symmetry}
        \int_{0}^{2\pi}F(\left<Y,\Gamma(X,\phi)\right>)d\phi = \int_{0}^{2\pi}F(\left<X,\Gamma(Y,\phi)\right>)d\phi
    \end{equation}
\end{lemma}
Given that $\xi \in S^2$, we can parameterize it using $\phi \in [0,2\pi)$. Thus, we may instead write (\ref{fournier-symmetry}) as
\begin{equation}
\label{symmetry-equation}
    \int_{S^2}F(\left<Y,\Gamma(X,\xi)\right>)d\xi = \int_{S^2}F(\left<X,\Gamma(Y,\xi)\right>)d\xi
\end{equation}
Utilizing (\ref{symmetry-equation}) gives the following equality,
\begin{equation*}
    \begin{split}
        & \frac{c}{\varepsilon} \int_{t-\varepsilon}^t\int_{\chi}\int_{S^2}\left[|\inprod{\kappa}{\Gamma(v_0 - u_s,\xi)}|^2 \wedge |\inprod{\kappa}{\Gamma(v_0 - u_s,\xi)}|^\nu\right]\sigma(v_0 - u_s)\beta(r_0 - q_s) d\xi d\eta_s ds\\
     & = \frac{c}{\varepsilon} \int_{t-\varepsilon}^t\int_{\chi}\int_{S^2}\left[|\inprod{v_0 - u_s}{\Gamma(\kappa,\xi)}|^2 \wedge |\inprod{v_0 - u_s}{\Gamma(\kappa,\xi)}|^\nu\right]\sigma(v_0 - u_s)\beta(r_0 - q_s) d\xi d\eta_s ds\\
    \end{split}
\end{equation*}
Now, we use the constructed set from Proposition \ref{geometric prop}. First, we note that $|\Gamma(\kappa,\xi)| = |\kappa|$ by construction. Now, we allow $u_s \in K(v_0,\Gamma(\kappa,\xi))$ and $q_s \in \{\beta(r_0 - q_s) \geq m > 0\}$. Then $|\inprod{v_0 - u_s}{\Gamma(\kappa,\xi)}| \geq |\Gamma(\kappa,\xi)| = |\kappa|$.
\begin{equation*}
    \begin{split}
             & = \frac{c}{\varepsilon} \int_{t-\varepsilon}^t\int_{\chi}\int_{S^2}\left[|\inprod{v_0 - u_s}{\Gamma(\kappa,\xi)}|^2 \wedge |\inprod{v_0 - u_s}{\Gamma(\kappa,\xi)}|^\nu\right]\\
     & \times \sigma(v_0 - u_s)\beta(r_0 - q_s) d\xi d\eta_s ds\\
     & \geq \frac{cm^2}{\varepsilon} \int_{t-\varepsilon}^t\int_{S^2}\left[|\kappa|^2 \wedge |\kappa|^\nu\right] d\xi d\eta_s(K(v_0,\Gamma(\kappa,\xi))\times\{\beta(r_0 - q_s) \geq m > 0\}) ds\\
     & \geq Cq_{t_0,t_1}\left[|\kappa|^2 \wedge |\kappa|^\nu\right]\\
    \end{split}
\end{equation*}
\end{proof}
In subsequent lemmas, a dependence on $\varepsilon$ would be a hindrance in establishing upper bounds related to the approximating process. However, the subsequent results do depend on $\varepsilon$, which is helped by the current bound's independence. The end goal is to assimilate the bounds to give the final estimate needed to establish the existence of a density function. 
\\ \textbf{Fourier Transform Results}\\
To gain information on the Fourier transform of the approximating process, we first recall Lemma 7.2 from \cite{four}, where we refer the reader to for the proof.
%le7.2 #&#
\begin{lemma}
\label{ssw1}
Let $\lambda$ be a nonnegative measure on $\mathbb{R}^3$ such that $\int_{\mathbb{R}^3}
|y| \lambda(dy)<\infty$
and consider
the infinitely divisible distribution $k$ with Fourier transform
\begin{eqnarray*}
\hat k(\xi):=\int_{\mathbb{R}^3}e^{i \langle\xi,x \rangle
}k(dx)=\exp\bigl(-
\Phi(\xi)\bigr) \qquad\mbox{with } \Phi(\xi)=\int_{\mathbb{R}^3}
\bigl(1-e^{i \langle\xi
,y \rangle} \bigr) \lambda(dy).
\end{eqnarray*}
If the right-hand side of the following inequality is finite, then $k$ has a
density (still denoted
by $k$) and
\[
\norm{\nabla k}_{L^1(\mathbb{R}^3)} \leq C \bigl(1+m_1^4(
\lambda)+m_4(\lambda) \bigr) \int_{\mathbb{R}^3}
e^{- \text{Re}\Phi(\xi)}(1+ |\xi|)\,d\xi, %
\]
where $m_n(\lambda)=\int_{\mathbb{R}^3} |y|^n \lambda(dy)$ and $C$ is a
universal constant.
\end{lemma}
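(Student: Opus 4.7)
The plan is to use the Fourier inversion formula. Since $\widehat{\nabla k}(\xi) = i\xi\,e^{-\Phi(\xi)}$, $L^1$ estimates on $\nabla k$ can be produced by trading polynomial decay in $x$ for smoothness of $\hat{k}$ in $\xi$. Concretely, in $\R^3$ the weight $(1+|x|^2)^{-2}$ is integrable, so it suffices to show the pointwise inequality
\begin{equation*}
(1+|x|^2)^2 |\nabla k(x)| \leq C\bigl(1 + m_1^4(\lambda) + m_4(\lambda)\bigr) \int_{\R^3}(1+|\xi|)e^{-\text{Re}\Phi(\xi)}\,d\xi,
\end{equation*}
after which dividing by $(1+|x|^2)^2$ and integrating delivers the claim. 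The finite right-hand side of the lemma guarantees $e^{-\text{Re}\Phi} \in L^1(\R^3)$, so $k$ admits a density and the Fourier identities used below are justified.

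To establish the pointwise bound, I would use the standard identity $|x^\alpha \nabla k(x)| \leq C\norm{\partial_\xi^\alpha[\xi\, e^{-\Phi(\xi)}]}_{L^1(\R^3)}$, valid for any multi-index $\alpha$, together with the fact that $(1+|x|^2)^2$ is a polynomial of degree $4$ in $x$. This reduces the task to estimating $\norm{\partial_\xi^\alpha[\xi e^{-\Phi}]}_{L^1}$ for each $|\alpha| \leq 4$. Expanding this derivative via the Leibniz and Fa\`a di Bruno formulas, one obtains a finite sum of terms of the shape $P(\xi)\prod_j \partial_\xi^{\beta_j}\Phi(\xi)\cdot e^{-\Phi(\xi)}$, where $P(\xi)$ is a monomial in $\xi$ of degree $0$ or $1$ (since $\xi$ enters linearly) and $\sum_j |\beta_j| \leq |\alpha|$ with each $|\beta_j| \geq 1$. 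Differentiating under the integral defining $\Phi$ yields $|\partial_\xi^\beta \Phi(\xi)| \leq m_{|\beta|}(\lambda)$, so each term is pointwise dominated by $(1+|\xi|)\prod_j m_{|\beta_j|}(\lambda)\, e^{-\text{Re}\Phi(\xi)}$.

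The main obstacle is the combinatorial bookkeeping of showing that every moment product $\prod_j m_{|\beta_j|}(\lambda)$ arising in the expansion (whose total order $\sum_j |\beta_j|$ is at most $4$) is controlled by $C(1+m_1^4(\lambda)+m_4(\lambda))$. The crucial structural point is that only products of total order $\leq 4$ appear; in particular the borderline product $m_1(\lambda)\cdot m_4(\lambda)$, of total order $5$, never occurs, which is what makes the announced bound linear in $m_4$. The admissible products are $m_4,\ m_1 m_3,\ m_2^2,\ m_1^2 m_2,\ m_1^4$ and their lower-order analogs. Each is absorbed into $C(1+m_1^4+m_4)$ by combining the H\"older interpolation $m_k(\lambda) \leq m_1^{(4-k)/3}(\lambda)\,m_4^{(k-1)/3}(\lambda)$, valid for $k \in [1,4]$, with Young's inequality at suitably chosen exponents. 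Once this bookkeeping is carried out, summing over $|\alpha| \leq 4$ gives the pointwise estimate, and the stated $L^1$ bound follows.
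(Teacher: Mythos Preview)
The paper does not prove this lemma; it explicitly recalls it as Lemma~7.2 of Fournier~\cite{four} and refers the reader there for the argument. Your proposal is precisely the proof Fournier gives: write $\|\nabla k\|_{L^1} \leq \bigl(\int_{\R^3}(1+|x|^2)^{-2}\,dx\bigr)\sup_x(1+|x|^2)^2|\nabla k(x)|$, bound the supremum via $\sum_{|\alpha|\leq 4}\|\partial_\xi^\alpha[\xi e^{-\Phi}]\|_{L^1}$ using Fourier inversion, and control each derivative through $|\partial_\xi^\beta\Phi|\leq m_{|\beta|}(\lambda)$ together with the moment interpolation $m_k\leq m_1^{(4-k)/3}m_4^{(k-1)/3}$ and Young's inequality. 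Your bookkeeping is correct; in particular, for any admissible product $\prod_j m_{|\beta_j|}$ with $\sum_j|\beta_j|\leq 4$ one has exponents $a$ on $m_1$ and $b$ on $m_4$ satisfying $a/4+b\leq 1$, which is exactly the condition needed for Young to absorb the product into $1+m_1^4+m_4$.
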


\begin{lemma}
\label{Upper-BD}
Recall
\begin{equation*}
        \psi_{\varepsilon,t,v_0,r_0}(\kappa) = \int_{t-\varepsilon}^t\int_{\chi}\int_{0}^{\varepsilon^{1/\nu}}\int_{S^2}(1 - e^{i<\kappa,\alpha(v_0,u_s,\theta,\xi)>})\sigma(v_0 - u_s)\beta(r_0 - q_s)d\xi Q(d\theta)d\eta ds
\end{equation*}
as defined in the previous section. Consider, $g_{\varepsilon,t,v_0,r_0} \in \mathcal{P}(\mathbb{R}^3)$ such that $\widehat{g_{\varepsilon,t,v_0,r_0}}(\kappa) = \text{exp}(-\psi_{\varepsilon,t,v_0,r_0}(\kappa))$. If $0 < t_0 \leq t - \varepsilon < t \leq t_1$ and $\varepsilon \in (0,1)$, then $g_{\varepsilon,t,v_0,r_0}$ has a density with the following estimate:
\begin{equation*}
    \norm{\nabla g_{\varepsilon,t,v_0,r_0}}_{L^1(\mathbb{R}^3)} \leq C\varepsilon^{-1/\nu}(1 + |v_0|^{\gamma + 4})
\end{equation*}
for $\gamma \in (0,2)$.
\end{lemma}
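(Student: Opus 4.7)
The plan is to apply Lemma \ref{ssw1} (Fournier's Lemma 7.2) to $g_{\varepsilon,t,v_0,r_0}$. Its characteristic exponent $\psi_{\varepsilon,t,v_0,r_0}$ from (\ref{psi-char}) is already in L\'evy--Khintchine form with L\'evy measure $\lambda$ given as the pushforward of $\sigma(|v_0-u_s|)\beta(r_0-q_s)\,d\xi\,Q(d\theta)\,d\eta\,ds$ on $[t-\varepsilon,t]\times(0,\varepsilon^{1/\nu}]\times S^{2}\times\chi$ under the map $(s,\theta,\xi,\eta)\mapsto\alpha(v_0,u_s(\eta),\theta,\xi)$; the finiteness of $m_1(\lambda)$ established below places us in the hypotheses of Lemma \ref{ssw1}.

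First, I estimate $m_n(\lambda)=\int_{\mathbb{R}^3}|y|^n\lambda(dy)$ for $n\in\{1,4\}$. Using $|\alpha|\leq\tfrac12\theta|v_0-u_s|$ from (\ref{alpha-lip}), $\sigma(|z|)\leq c_\sigma(1+|z|^\gamma)$ from (\ref{sigma-gamma}), the boundedness of $\beta$, the bound $b(\theta)\leq C\theta^{-1-\nu}$ from (\ref{defn-of-Q-measure}), the moment estimates of Theorem \ref{sund-estimate-thm} guaranteeing $\sup_{s\in[t_0,t_1]}E|u_s|^{4+\gamma}<\infty$, together with $\int_0^{\varepsilon^{1/\nu}}\theta^n Q(d\theta)\leq C\varepsilon^{(n-\nu)/\nu}$ and $\int_{t-\varepsilon}^t ds=\varepsilon$, I obtain
\begin{equation*}
m_n(\lambda) \;\leq\; C_{t_0,t_1}\,\varepsilon^{n/\nu}\bigl(1+|v_0|^{n+\gamma}\bigr).
\end{equation*}

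Next, I bound the Fourier integral appearing in Lemma \ref{ssw1}. Applying Lemma \ref{Lower-BD} (with $\kappa=\varepsilon^{1/\nu}\xi$) gives $\mathrm{Re}\,\psi_{\varepsilon,t,v_0,r_0}(\xi)\geq Cq_{t_0,t_1}\bigl[(\varepsilon^{1/\nu}|\xi|)^2\wedge(\varepsilon^{1/\nu}|\xi|)^\nu\bigr]$, and the substitution $\eta=\varepsilon^{1/\nu}\xi$ yields
\begin{equation*}
\int_{\mathbb{R}^3}e^{-\mathrm{Re}\,\psi_{\varepsilon,t,v_0,r_0}(\xi)}(1+|\xi|)\,d\xi \;\leq\; \varepsilon^{-3/\nu}\int_{\mathbb{R}^3}e^{-C[|\eta|^2\wedge|\eta|^\nu]}(1+\varepsilon^{-1/\nu}|\eta|)\,d\eta \;\leq\; K\,\varepsilon^{-4/\nu},
\end{equation*}
since the stretched-exponential decay makes the $\eta$-integral finite. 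A naive direct application of Lemma \ref{ssw1} at this stage would only produce $\varepsilon^{-4/\nu}$, not the desired $\varepsilon^{-1/\nu}$.

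The key trick is to apply Lemma \ref{ssw1} instead to the dilated measure $\tilde g(y):=\varepsilon^{3/\nu}g_{\varepsilon,t,v_0,r_0}(\varepsilon^{1/\nu}y)$, i.e.\ the law of $\varepsilon^{-1/\nu}Y$ for $Y\sim g_{\varepsilon,t,v_0,r_0}$. Its L\'evy measure $\tilde\lambda$ has $\tilde m_n=\varepsilon^{-n/\nu}m_n(\lambda)\leq C_{t_0,t_1}(1+|v_0|^{n+\gamma})$ independent of $\varepsilon$, and its characteristic exponent $\tilde\psi(\xi)=\psi_{\varepsilon,t,v_0,r_0}(\varepsilon^{-1/\nu}\xi)$ obeys $\mathrm{Re}\,\tilde\psi(\xi)\geq C[|\xi|^2\wedge|\xi|^\nu]$, so $\int e^{-\mathrm{Re}\,\tilde\psi(\xi)}(1+|\xi|)\,d\xi$ is an $\varepsilon$-free constant. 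Lemma \ref{ssw1} then yields $\norm{\nabla\tilde g}_{L^1(\mathbb{R}^3)}\leq C_{t_0,t_1}(1+|v_0|^{4+\gamma})$, the polynomial bound coming from $1+\tilde m_1^4+\tilde m_4$. Finally, the dilation identity $\norm{\nabla g_{\varepsilon,t,v_0,r_0}}_{L^1}=\varepsilon^{-1/\nu}\norm{\nabla\tilde g}_{L^1}$ produces the $\varepsilon^{-1/\nu}$ factor and closes the proof. The main technical point is precisely this bookkeeping of the $\varepsilon$-powers: performing the dilation at the outset is what makes the $\varepsilon^{-4/\nu}$ from the Fourier integral cancel against the $\varepsilon^{n/\nu}$ in the moments, leaving only the single $\varepsilon^{-1/\nu}$ Jacobian factor.
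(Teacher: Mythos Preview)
Your proof is correct and follows essentially the same route as the paper: both apply Lemma~\ref{ssw1} not to $g_{\varepsilon,t,v_0,r_0}$ directly but to its $\varepsilon^{-1/\nu}$-dilation (your $\tilde g$, the paper's $k_{\varepsilon,t,v_0,r_0}$), so that the moment estimate and the Fourier integral (via Lemma~\ref{Lower-BD}) become $\varepsilon$-free and the single $\varepsilon^{-1/\nu}$ arises from the scaling identity $\norm{\nabla g}_{L^1}=\varepsilon^{-1/\nu}\norm{\nabla\tilde g}_{L^1}$. Your explicit ``false start'' paragraph explaining why the undilated estimate loses a factor $\varepsilon^{-3/\nu}$ is a nice addition that the paper leaves implicit.
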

\begin{proof}
For $X_{\varepsilon,t,v_0,r_0}$, a $g_{\varepsilon,t,v_0,r_0}$ distributed variable, define
\begin{equation*}
    Y_{\varepsilon,t,v_0,r_0} = \varepsilon^{-\frac{1}{\nu}}X_{\varepsilon,t,v_0,r_0}.
\end{equation*}
Then the law $k_{\varepsilon,t,v_0,r_0}$ of $Y_{\varepsilon,t,v_0,r_0}$ satisfies $\widehat{k_{\varepsilon,t,v_0,r_0}}(\kappa) = \text{exp}(-\psi_{\varepsilon,t,v_0,r_0}(\varepsilon^{-1/\nu}\kappa)$ and $k_{\varepsilon,t,v_0,r_0}(x) = \varepsilon^{3/\nu}g_{\varepsilon,t,v_0,r_0}(\varepsilon^{1/\nu}x)$. Note
\begin{equation*}
    \norm{\nabla g_{\varepsilon,t,v_0,r_0}}_{L^1(\mathbb{R}^3)} \leq \varepsilon^{-1/\nu}\norm{\nabla k_{\varepsilon,t,v_0,r_0}}_{L^1(\mathbb{R}^3)} 
\end{equation*}
Define $\Phi_{\varepsilon,t,v_0,r_0}(\kappa) = \psi_{\varepsilon,t,v_0,r_0}(\varepsilon^{-1/\nu}\kappa)$. Then
\begin{equation*}
    \begin{split}
        & \Phi_{\varepsilon,t,v_0,r_0}(\kappa)\\ & = \int_{t-\varepsilon}^t\int_{\chi}\int_{0}^{\varepsilon^{1/\nu}}\int_{S^2}(1 - e^{i<\kappa,\varepsilon^{-1/\nu}\alpha(v_0,u_s,\theta,\xi)>})\sigma(v_0 - u_s)\beta(r_0 - q_s)d\xi b(\theta)d\theta d\eta ds\\
        & = \int_{\mathbb{R}^3}(1 - e^{i<\kappa,z>})\lambda_{t,\varepsilon,v_0,r_0}(dz)
    \end{split}
\end{equation*}
where the measure $\lambda_{t,\varepsilon,v_0,r_0}$ is defined by
\begin{equation*}
    \begin{split}
    & \int_{\mathbb{R}^3}F(z)\lambda_{t,\varepsilon,v_0,r_0}(dz)\\
    & = \int_{t-\varepsilon}^t\int_{\chi}\int_{0}^{\varepsilon^{1/\nu}}\int_{S^2}F\left(\frac{\alpha(v_0,u_s,\theta,\xi)}{\varepsilon^{1/\nu}}\right)\sigma(v_0 - u_s)\beta(r_0 - q_s)d\xi b(\theta)d\theta d\eta ds
    \end{split}
\end{equation*}
for all nonnegative measurable $F:\mathbb{R}^3 \to \mathbb{R}$. Lemma \ref{ssw1} implies
\begin{equation*}
    \begin{split}
        & \norm{k_{\varepsilon,t,v_0,r_0}}_{L^1(\mathbb{R}^3)}\\
        & \leq C(1 + m_1^4(\lambda_{t,\varepsilon,v_0,r_0}) + m_4(\lambda_{t,\varepsilon,v_0,r_0}))\int_{\mathbb{R}^3}e^{-\text{Re}\,\Phi_{\varepsilon,t,v_0(\kappa)}}(1 + |\kappa|)\,d\kappa\\
        & \leq C(1 + m_1^4(\lambda_{t,\varepsilon,v_0,r_0}) + m_4(\lambda_{t,\varepsilon,v_0,r_0}))\left(1 + \int_{|\kappa| \geq 1}e^{-\text{Re}\,\psi_{\varepsilon,t,v_0(\varepsilon^{-1/\nu}\kappa)}}|\kappa|d\kappa\right)
    \end{split}
\end{equation*}
provided the right side is finite. What we now do, is take one more H\"{o}lder inequality to instead use the following (larger) upper bound, and verify it is finite. 
\begin{equation*}
    \norm{k_{\varepsilon,t,v_0,r_0}}_{L^1(\mathbb{R}^3)} \leq C(1 + m_4(\lambda_{t,\varepsilon,v_0,r_0}))\left(1 + \int_{|\kappa| \geq 1}e^{-\text{Re}\,\psi_{\varepsilon,t,v_0(\varepsilon^{-1/\nu}\kappa)}}|\kappa|d\kappa\right)
\end{equation*}
Now, we work on a moment estimate.
\begin{equation}
    \begin{split}
        & m_4(\lambda_{t,\varepsilon,v_0,r_0}) \leq \int_{t-\varepsilon}^t\int_{\chi}\int_{0}^{\varepsilon^{1/\nu}}\int_{S^2}\frac{|\theta|^4|u_s - v_0|^4}{2^4\varepsilon^{4/\nu}}\sigma(v_0 - u_s)\beta(r_0 - q_s)d\xi b(\theta)d\theta d\eta ds\\
        & \leq C\int_{t - \varepsilon}^t\int_{\chi}\int_{0}^{\varepsilon^{1/\nu}}\frac{|\theta|^4(|v_0 - u_s|^4 + |v_0 - u_s|^{4 + \gamma})}{2^4\varepsilon^{4/\nu}}b(\theta)d\theta d\eta ds\\
        & \leq C\int_{t - \varepsilon}^t\int_{\chi}\int_{0}^{\varepsilon^{1/\nu}}\frac{|\theta|^{4 - 1 - \nu}(1 + |v_0 - u_s|^{4 + \gamma})}{\varepsilon^{4/\nu}}d\theta d\eta ds\\
        & = C\int_{t - \varepsilon}^t\int_{\chi}\frac{\varepsilon^{(4 + \nu)/\nu}(1 + |v_0 - u_s|^{4 + \gamma})}{\varepsilon^{4/\nu}}d\eta ds\\
        & \leq \frac{C}{\varepsilon}\int_{t - \varepsilon}^t\int_{\chi}1 + |v_0|^{4 + \gamma} + |u_s|^{4 + \gamma}d\eta ds\\
        & \leq C\sup_{s \in [t-\varepsilon,t]}\int_{\chi}(1 + |u_s|^{4 + \gamma} + |v_0|^{4 + \gamma})d\eta\\
    \end{split}
\end{equation}
Then note that since $4 + \gamma \leq \frac{8}{2-\gamma}$ for $\gamma \in (0,2)$:
\begin{equation*}
    C\sup_{s \in [t-\varepsilon,t]}\int_{\chi}(|u_s|^{4 + \gamma} + |v_0|^{4 + \gamma})d\eta \leq C_t(1 + |v_0|^{\gamma + 4})
\end{equation*}
where $C_{t}$ denotes a constant dependent on $t$. Now, from Lemma \ref{Lower-BD}, we have
\begin{equation*}
        \int_{|\kappa| \geq 1}e^{-\text{Re}\,\psi_{\varepsilon,t,v_0(\varepsilon^{-1/\nu}\kappa)}}|\kappa|d\kappa \leq C_{t_0,t_1}
\end{equation*}
where $C_{t_0,t_1}$ depends on $q_{t_0,t_1}$ from Lemma \ref{Lower-BD}. Thus,
\begin{equation*}
    \norm{\nabla k_{\varepsilon,t,v_0,r_0}}_{L^1(\mathbb{R}^3)} \leq C_{t_0,t_1}(1 + |v_0|^{\gamma + 4})
\end{equation*}
meaning 
\begin{equation*}
    \norm{\nabla g_{\varepsilon,t,v_0,r_0}}_{L^1(\mathbb{R}^3)} \leq \varepsilon^{-1/\nu}C_{t_0,t_1}(1 + |v_0|^{\gamma + 4})
\end{equation*}
\end{proof}
We note that the transform estimate does not directly depend on position, due to the bound $q_{t_0,t_1}$ being solely dependent on a lower bound for $\beta$. 
\section{Existence of Density for the Boltzmann-Enskog process}
We have now established the necessary estimates on the Fourier transform of the approximating process to discuss the beneficial estimate stated in Lemma \ref{approx-est-lem}. The benefit of the Fourier transform estimates will be seen in calculations related to the approximating process $(R^\varepsilon_t,V^\varepsilon_t)$. We now prove Lemma \ref{approx-est-lem} before finally proving the main theorem.\\
\textbf{Proof of Lemma \ref{approx-est-lem} }\\
The estimates established in earlier sections can now be utilized to give the result of Lemma \ref{approx-est-lem}.
\begin{proof} 
Take $\varepsilon \in (0,1)$. Take $0 < t_0 \leq t - \varepsilon \leq t \leq t_1$, $\phi \in L^\infty(\mathbb{R}^3)$ and $h \in \mathbb{R}^3$. Recall our definitions of the processes $U_t^\varepsilon$ and $W_t^\varepsilon$ earlier. Then
\begin{equation*}
\begin{split}
        & |E[\phi(V_t^\varepsilon + h) - \phi(V_t^\varepsilon)]|\\
        & = |E[\phi(U_t^\varepsilon + W_t^\varepsilon + h) - \phi(U_t^\varepsilon + W_t^\varepsilon)]|\\
        & = |E[E[\phi(U_t^\varepsilon + W_t^\varepsilon + h) - \phi(U_t^\varepsilon + W_t^\varepsilon)|\mathcal{F}_{t-\varepsilon}]]|\\
        & |E\left[\int_{\mathbb{R}^3}\phi(x + W_t^\varepsilon + h) - \phi(x + W_t^\varepsilon)g_{\varepsilon,t,V_{t-\varepsilon},R_{t-\varepsilon}}(x)dx\right]|\\
        & = |E\left[\int_{\mathbb{R}^3} \phi(x + W_t^\varepsilon)[g_{\varepsilon,t,V_{t-\varepsilon},R_{t-\varepsilon}}(x - h) - g_{\varepsilon,t,V_{t-\varepsilon},R_{t-\varepsilon}}(x)]dx\right]|\\
        & \leq  \norm{\phi}_{L^\infty(\mathbb{R}^3)}|h|E[\norm{\nabla g_{\varepsilon,t,V_{t-\varepsilon},R_{t-\varepsilon}}}_{L^1(\mathbb{R}^3)}].
\end{split}
\end{equation*}
where we use the following calculation which is immediate using the multivariate mean value theorem.
\begin{equation*}
    \begin{split}
    &\int_{\mathbb{R}^3}|g(x - h) - g(x)|\,dx\\
    & \leq \int_{\mathbb{R}^3}\int_0^1|h||\nabla g(x - uh)|\,du\,dx\\
    & \leq |h|\int_0^1\norm{\nabla g}_{L^1(\mathbb{R}^3)}\,du\\
    & = |h|\norm{\nabla g}_{L^1(\mathbb{R}^3)}
    \end{split}    
\end{equation*}
Thus, for $\gamma \in (0,2)$, by the above work and Lemma \ref{Upper-BD},
\begin{equation*}
    |E[\phi(V_t^\varepsilon + h) - \phi(V_t^\varepsilon)]| \leq C_{t_0,t_1}\norm{\phi}_{L^\infty(\mathbb{R}^3)}\varepsilon^{-1/\nu}E[(1 + |V_{t - \varepsilon}|)^{\gamma + 4}].
\end{equation*}
The right side of this inequality is finite as $E|V_{t - \varepsilon}|^{\gamma + 4} < \infty$. Thus,
\begin{equation*}
    |E[\phi(V_t^\varepsilon + h) - \phi(V_t^\varepsilon)]| \leq C_{t_0,t_1}\norm{\phi}_{L^\infty(\mathbb{R}^3)}|h|\varepsilon^{-1/\nu}
\end{equation*}
\end{proof}
\noindent \textbf{Existence of Density Theorem}\\
Finally, the proof for the existence of a density function for the distribution of $V$ at each finite time $t$ can be stated and proven. The estimate of Lemma \ref{approx-est-lem} has given a needed window for the required power in equation (\ref{deb-rom-eqn}).
\begin{thm}
    Let $(R,V)$ be an Boltzmann-Enskog process as defined in Theorem \ref{sund-estimate-thm} with $\gamma \in (0,2)$. Assume the assumptions on $\alpha$,$\beta$ and $\sigma$ in section \ref{assume} are satisfied. Let $f_0 \in \mathcal{P}_2(\mathbb{R}^6)$ not be a Dirac mass. Then $V_t$ has a density which resides in a Besov space. 
\end{thm}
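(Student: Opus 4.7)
The plan is to verify the hypothesis of the Debussche--Romito criterion (Theorem \ref{deb-rom-exist}) for the law of $V_t$ at any fixed $t>0$. Concretely, I would show that for a suitable pair $0<\alpha<a<1$ and a constant $\kappa=\kappa(t)$, the inequality
\[
\bigl|E[\phi(V_t+h)-\phi(V_t)]\bigr|\le \kappa\,\norm{\phi}_{C^\alpha_b(\R^3)}\,|h|^a
\]
holds for every $\phi\in C^\alpha_b(\R^3)$ and every $h\in\R^3$ with $|h|\le 1$. Conclusion about membership in $B^{a-\alpha}_{1,\infty}(\R^3)$ with explicit norm bound then follows from Theorem \ref{deb-rom-exist}.

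The bridge to the approximating process $V_t^\varepsilon$ is the triangle split
\[
\bigl|E[\phi(V_t+h)-\phi(V_t)]\bigr|\le E\bigl|\phi(V_t+h)-\phi(V_t^\varepsilon+h)\bigr| + \bigl|E[\phi(V_t^\varepsilon+h)-\phi(V_t^\varepsilon)]\bigr| + E\bigl|\phi(V_t^\varepsilon)-\phi(V_t)\bigr|.
\]
For the first and third terms I would use that $\phi\in C^\alpha_b$, giving the pointwise estimate $|\phi(x)-\phi(y)|\le \norm{\phi}_{C^\alpha_b}|x-y|^\alpha$, and then apply Jensen's inequality (valid for $\alpha\in(0,1)$) together with Proposition \ref{approx-def} to get
\[
E|V_t-V_t^\varepsilon|^\alpha \le (E|V_t-V_t^\varepsilon|)^\alpha \le K_t^\alpha\,\varepsilon^{\alpha\kappa_\gamma}.
\]
The middle term is exactly what Lemma \ref{approx-est-lem} controls, yielding a bound of the form $C_{t_0,t_1}\norm{\phi}_{L^\infty}|h|\,\varepsilon^{-1/\nu}\le C_{t_0,t_1}\norm{\phi}_{C^\alpha_b}|h|\,\varepsilon^{-1/\nu}$.

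Combining gives, for every admissible $\varepsilon\in(0,1)$ and all $0<t_0\le t-\varepsilon\le t\le t_1$,
\[
\bigl|E[\phi(V_t+h)-\phi(V_t)]\bigr|\le C_{t_0,t_1}\,\norm{\phi}_{C^\alpha_b(\R^3)}\,\bigl(\varepsilon^{\alpha\kappa_\gamma}+|h|\,\varepsilon^{-1/\nu}\bigr).
\]
Optimizing in $\varepsilon$ by equating the two terms, i.e. choosing $\varepsilon=|h|^{\nu/(\nu\alpha\kappa_\gamma+1)}$, delivers the desired H\"older-in-$h$ bound with exponent
\[
a \;=\; \frac{\nu\,\alpha\,\kappa_\gamma}{\nu\,\alpha\,\kappa_\gamma+1}.
\]
A direct calculation shows $a>\alpha$ is equivalent to $\alpha<1-\tfrac{1}{\nu\kappa_\gamma}$, so there is a nonempty window of admissible $\alpha$ provided $\nu\kappa_\gamma>1$; in that regime Theorem \ref{deb-rom-exist} yields a density for the law of $V_t$ lying in $B^{a-\alpha}_{1,\infty}(\R^3)$ with $\norm{\cdot}_{B^{a-\alpha}_{1,\infty}}\le 1+C_{d,a,\alpha}\kappa(t)$.

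The main obstacle is this compatibility condition between the exponent $\kappa_\gamma$ coming from the $L^1$ rate of approximation in Proposition \ref{approx-def} and the blow-up rate $\varepsilon^{-1/\nu}$ produced by Lemma \ref{approx-est-lem}: the Besov regularity obtained is positive only when $\alpha$ can be chosen in $(0,1-1/(\nu\kappa_\gamma))$. Two supporting points must also be treated carefully in the write-up: first, the application of Lemma \ref{approx-est-lem} requires $t>0$, so the density is asserted at each fixed strictly positive time (consistent with the support statement in Section 4 and the moment framework of Theorem \ref{sund-estimate-thm}); second, the initial distribution $f_0\in\mathcal{P}_2(\R^6)$ is not directly enough to invoke Theorem \ref{sund-estimate-thm}, so I would either assume the stronger moment condition from Theorem \ref{sund-estimate-thm} or remark that the argument is applied to the restricted class where that moment hypothesis holds, in which case the higher moments $E|V_{t-\varepsilon}|^{\gamma+4}$ appearing in Lemma \ref{Upper-BD} are finite and all bounds are uniform on the chosen interval $[t_0,t_1]$.
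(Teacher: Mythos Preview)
Your proposal follows essentially the same route as the paper: the same triangle split via $V_t^\varepsilon$, the same use of Proposition \ref{approx-def} (with Jensen) and Lemma \ref{approx-est-lem} for the three pieces, and then optimization in $\varepsilon$ to land in the Debussche--Romito window. The only cosmetic difference is that the paper sets $\varepsilon=|h|^c$ and imposes range conditions on $c$, whereas you equate the two terms to get the explicit exponent $a=\nu\alpha\kappa_\gamma/(\nu\alpha\kappa_\gamma+1)$; these are equivalent. Your explicit flagging of the compatibility condition $\nu\kappa_\gamma>1$ (equivalently $\nu>(\gamma+1)/(2\gamma+1)$ for $\gamma\in(0,1)$, respectively $\nu>(\gamma+1)/(\gamma+2)$ for $\gamma\in[1,2)$) is in fact more careful than the paper, which silently assumes a nonempty interval for $c$ exists.
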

\begin{proof}
    Fix $t > 0$. We want to apply Theorem \ref{deb-rom-exist}. Let $h \in \mathbb{R}^3$ with $|h| \leq 1$ and $\phi \in C^\alpha_b(\mathbb{R}^3)$ for $\alpha \in (0,1)$. Define
    \begin{equation*}
        I^\phi_{t,h} = |E[(\phi(V_t + h) - \phi(V_t)]|
    \end{equation*}
    Recall the approximated process $V^\varepsilon_t$, then we have 
    \begin{equation*}
        \begin{split}
            I^\phi_{t,h} & \leq |E[(\phi(V_t + h) - \phi(V_t^\varepsilon + h)]| + |E[(\phi(V_t^\varepsilon) - \phi(V_t)]|\\
            & + |E[(\phi(V_t^\varepsilon + h) - \phi(V_t^\varepsilon)]|\\
            & \leq 2\norm{\phi}_{C^\alpha_b(\mathbb{R}^3)}E|V_t - V_t^\varepsilon|^\alpha + C_{t}\norm{\phi}_{L^\infty(\mathbb{R}^3)}\varepsilon^{-1/\nu}|h|\\
            & \leq C_t\norm{\phi}_{C^\alpha_b(\mathbb{R}^3)}[E|V_t - V_t^\varepsilon|^\alpha + \varepsilon^{-1/\nu}|h|]
        \end{split}
    \end{equation*}
    where we applied the upper estimate from Lemma \ref{approx-est-lem} and that $\norm{\phi}_{L^\infty(\mathbb{R}^3)} \leq \norm{\phi}_{C^\alpha_b(\mathbb{R}^3)}$. Now, assuming $\gamma \in (0,1)$, we use Jensen's inequality and the estimate from Theorem \ref{approx-def}, to give
    \begin{equation*}
    I^\phi_{t,h} \leq C_t\norm{\phi}_{C^\alpha_b(\mathbb{R}^3)}[\varepsilon^{\frac{2\gamma + 1}{\gamma + 1}\alpha} + \varepsilon^{-1/\nu}|h|]
    \end{equation*}
    Provided we take $c > \frac{\gamma + 1}{2\gamma + 1}$ and $c < \nu$, then we can let $\varepsilon = |h|^{c}$. This gives the upper bound of 
    \begin{equation*}
        I^\phi_{t,h} \leq C_t\norm{\phi}_{C^\alpha_b(\mathbb{R}^3)}[|h|^{\frac{2\gamma + 1}{\gamma + 1}c\alpha} + |h|^{1 -c/\nu}].
    \end{equation*}
    Thus, we can conclude that there is some $c$ we can raise so that 
    \begin{equation*}
        I^\phi_{t,h} \leq \kappa\norm{\phi}_{C^\alpha_b(\mathbb{R}^3)}|h|^a
    \end{equation*}
    for some positive power $a$ where $\alpha < a < 1$. If $\gamma \in [1,2)$, we can instead bound above by 
    \begin{equation*}
        I^\phi_{t,h} \leq C_t\norm{\phi}_{C^\alpha_b(\mathbb{R}^3)}[|h|^{\frac{\gamma + 2}{\gamma + 1}c\alpha} + |h|^{1 -c/\nu}]
    \end{equation*}
    using the other upper bound noted in Proposition \ref{approx-def}. In either case, we can conclude there exists a density using Theorem \ref{deb-rom-exist}. 
\end{proof}

\begin{appendix}\noindent {\Large \bf Appendix}
	
\begin{thm}
Let $\{f_t\}_{t \geq 0}$ be a solution of the Boltzmann-Enskog equation \eqref{EQ:03}. Let the following assumptions hold 
\begin{itemize}
	\item \begin{equation} \label{C1}
		\sup_{t}  \int_{\R^{6}} |v|^3\, | v \cdot \nabla_r f_t(r,v)| drdv < \infty \end{equation}
	\item \begin{equation} \label{C2} \lim_{|r|\to \infty} f(t,r, v) =0 \end{equation}
\end{itemize}
then \eqref{conv-laws}
%\eqref{consmass}, \eqref{consmomentum}, \eqref{consenergy} 
hold.
\end{thm}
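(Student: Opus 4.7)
The plan is to multiply the Boltzmann-Enskog equation \eqref{EQ:03} by a test function $\psi(v)\in\{1,v,|v|^2\}$, integrate over $(r,v)\in\R^6$, and show that both the transport term and the collision term vanish, so that each of the three quantities $\int_{\R^6}\psi(v) f_t(r,v)\,dr\,dv$ is independent of $t$. Hypothesis \eqref{C1} will justify Fubini and the interchange of $\partial_t$ with $\int_{\R^6}$; hypothesis \eqref{C2} will provide the decay at spatial infinity needed for the transport term. Concretely, I will establish
\begin{equation*}
\frac{d}{dt}\int_{\R^6}\psi(v) f_t(r,v)\,dr\,dv = -\int_{\R^6}\psi(v)\, v\cdot \nabla_r f_t\,dr\,dv + \int_{\R^6}\psi(v)\, Q_E^\beta(f_t,f_t)\,dr\,dv = 0,
\end{equation*}
whence evaluating at $t$ and at $0$ yields the three relations in \eqref{conv-laws}.

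For the transport term, since $\psi$ is $r$-independent one rewrites $\psi(v)\,v\cdot\nabla_r f_t=\mathrm{div}_r(\psi(v)\,v\, f_t)$ (equivalently, $\sum_i\partial_{r_i}(\psi(v)v_i f_t)$). Fubini, justified by \eqref{C1} (which controls the worst case $\psi(v)=|v|^2$ since $|\psi(v)\,v|\leq |v|^3$), followed by the fundamental theorem of calculus coordinate-by-coordinate in $r$, reduces the term to boundary contributions at $|r|\to\infty$; these vanish by \eqref{C2}. For the collision term, I will symmetrise the integrand by combining Tanaka's Proposition~\ref{PropAppTanaka} (the unit-Jacobian change of variables $(v,u,\theta,\phi)\mapsto(v^*,u^*,\theta,\phi')$) with the $(r,v)\leftrightarrow(q,u)$ exchange of the outer integration variables; this exchange leaves $\beta(|r-q|)\sigma(|v-u|)$ invariant and swaps $v^*$ with $u^*$. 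The resulting identity is
\begin{align*}
\int_{\R^6}\psi(v) Q_E^\beta(f_t,f_t)\,dr\,dv
&= \tfrac12\int_{\R^{12}\times\Xi}\bigl[\psi(v^*)+\psi(u^*)-\psi(v)-\psi(u)\bigr]\\
&\quad\times f_t(r,v) f_t(q,u)\beta(|r-q|)\sigma(|v-u|)\,Q(d\theta)\,d\xi\,dr\,dv\,dq\,du.
\end{align*}
The bracket vanishes identically for $\psi\equiv 1$, and vanishes for $\psi(v)=v$ and $\psi(v)=|v|^2$ by the elastic collision relations \eqref{CONSERVATION}.

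The main technical obstacle is to verify the absolute integrability required to invoke Fubini at each step of the collision-term symmetrisation, particularly for $\psi(v)=|v|^2$: one needs $\int f_t(r,v) f_t(q,u)\beta(|r-q|)\sigma(|v-u|)\,|v|^2\,dr\,dv\,dq\,du<\infty$. This follows from the boundedness of $\beta$, the growth bound \eqref{sigma-gamma} on $\sigma$, and an a priori bound on second moments of $f_t$ (assumed implicitly in the theorem's setting, since without it the collision operator itself is not well-defined). A minor notational wrinkle is that \eqref{eqn1.9} is stated for fixed position $r$; inserting the $q,u$-integration as an outer integral before applying the Tanaka transformation handles this cleanly, since $\beta(|r-q|)f_t(q,u)\,dq\,du$ then plays the role of a $v$-independent weight. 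Once the integrability bookkeeping is in place, the remainder of the argument is algebraic.
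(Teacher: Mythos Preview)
Your proposal is correct and follows essentially the same approach as the paper: multiply by the collision invariants $\Phi(v)=A+B\cdot v+C|v|^2$, use \eqref{C1}--\eqref{C2} to kill the transport term via integration by parts, and show the collision term vanishes. The only difference is cosmetic: where the paper simply cites the Cercignani--Illner--Pulvirenti argument to conclude $\int\Phi\,\mathcal{Q}=0$, you spell out the symmetrisation explicitly via Proposition~\ref{PropAppTanaka} and the $(r,v)\leftrightarrow(q,u)$ swap, and you are more careful about the Fubini justifications.
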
	
\begin{proof}
	Let $\{f_t\}_{t \geq 0}$ be a solution of the Boltzmann -Enskog equation \eqref{EQ:03}. For any  $A,C\in \R$ and $B\in \R^3$ consider the function
	\begin{equation*}
		\Phi(v)= A+B \cdot v +C |v|^2, \quad v \in \R^3
	\end{equation*}
Then 
\begin{equation}\label{Qinv}
\int_{\R^6}\Phi(v) \mathcal{Q}(f_t,f_t)(r,v) dr dv=0 
\end{equation}
The proof of \eqref{Qinv} is identical to the proof proposed by Boltzmann  to prove  that\\ $\int_{\R^6}\Phi(v) {Q}(f,f)(t, r,v) dr dv=0$ for the non -linear term  appearing in the Boltzmann equation, presented in Sect. 3.1 of the monograph by Cercignani, Illner and Pulvirenti \cite{CIPbook}.\\
The weak formulation of the Boltzmann -Enskog equation provides in particular that
\begin{align*}
&\hspace{5cm}\int_{\R^6}\Phi(v) f_t(r,v) drdv =\\
&\int_{\R^6}\Phi(v) f_0(r,v) drdv - \int_{\R^6}\Phi(v) \, v \cdot \nabla_r f_t(r,v) drdv +\int_{\R^6}\Phi(v) \mathcal{Q}(f_t,f_t)(r,v) dr dv
\end{align*}
By assumption \eqref{C1} the second term in the r.h.s. can be integrated by parts and is zero due to assumption \eqref{C2}. The statement is then a consequence of \eqref{Qinv}.
\end{proof}
In \cite{sund} the following Corollary is proven.
\begin{corollary} [Corollary 2.6 \cite{sund}] 
%In fact, in Corollary 2.6 of \cite{FRS1} it is proven that any solution $\{f_t\}_{t\geq 0 }$ to (\ref{EQ:03}) satisfies \eqref{consmass} and that
Let $\{f_t\}_{t \geq 0}$ be a solution of the Boltzmann -Enskog equation \eqref{EQ:03}.
Under the sufficient condition  
	\begin{equation}\label{int third moment}
		\int_0^T  \int_{\R^{6}}|v|^3f_t(r,v)drdvdt < \infty \quad \forall \,T>0
	\end{equation}
    the  solution $\{f_t\}_{t\geq 0 }$ is conservative, i.e satisfies \eqref{conv-laws}. %\eqref{consmomentum} and \eqref{consenergy},  as well. \\
\end{corollary}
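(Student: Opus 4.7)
My strategy is to test the weak formulation \eqref{Enskog-weak} against the collision invariants $\Phi(v)\in\{1,v_i,|v|^2\}$ via a cut-off-and-limit procedure that sidesteps the fact that these $\Phi$ are not compactly supported. Fix $T>0$ and introduce smooth cut-offs $\chi_n,\eta_k\in C_c^1(\R^3;[0,1])$ with $\chi_n\equiv 1$ on $B(0,n)$, $\eta_k\equiv 1$ on $B(0,k)$, and $|\nabla\chi_n|\leq C/n$. Inserting $\psi_{n,k}(r,v):=\chi_n(r)\eta_k(v)\Phi(v)\in C_c^1(\R^6)$ into \eqref{Enskog-weak} and integrating in $t\in[0,T]$, the right-hand side splits into a transport piece $\int_0^T\!\int v\cdot\nabla_r\chi_n(r)\,\eta_k(v)\Phi(v)\,f_t\,drdv\,dt$ and a collision piece. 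The plan is to let $k\to\infty$ and then $n\to\infty$ and show that both pieces vanish in the limit, so that the identity reduces to $\int\Phi f_T\,drdv=\int\Phi f_0\,drdv$.

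\textbf{Collision piece.} After removing $\eta_k$ by dominated convergence, the collision contribution reads schematically $\int\chi_n(r)\sigma(|v-u|)\beta(|r-q|)[\Phi(v^*)-\Phi(v)]f_t(r,v)f_t(q,u)\,drdvdqdu\,Q(d\theta)d\xi$. I will apply Proposition \ref{PropAppTanaka} together with the swaps $(v,u)\leftrightarrow(u,v)$ and $(r,q)\leftrightarrow(q,r)$ (the latter a symmetry of $\beta(|r-q|)$); the resulting four-fold average produces the collision-invariant factor $\Phi(v^*)+\Phi(u^*)-\Phi(v)-\Phi(u)\equiv 0$ for our choices of $\Phi$, modulo a residual term proportional to $\chi_n(r)-\chi_n(q)$ that tends to $0$ as $n\to\infty$ by bounded convergence. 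The Fubini estimate needed to justify this symmetrisation is $|\Phi(v^*)-\Phi(v)|\leq C\theta|v-u|(1+|v|+|u|)$, which follows from $|\alpha|\leq\tfrac{1}{2}\theta|v-u|$ and a Taylor expansion; combined with $\sigma(|v-u|)\leq C(1+|v-u|^\gamma)$ for $\gamma\in(0,2)$, the bound $\int_0^\pi\theta\,Q(d\theta)<\infty$, boundedness of $\beta$, and the hypothesis \eqref{int third moment}, one controls the absolute integrand by a time-space integrable majorant.

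\textbf{Transport piece and conclusion.} The transport term is dominated pointwise by $\tfrac{C}{n}(1+|v|^3)f_t(r,v)$ for $\Phi\in\{1,v_i,|v|^2\}$, hence its absolute value does not exceed $\tfrac{C}{n}\int_0^T\!\int(1+|v|^3)f_t\,drdv\,dt$, which is finite by \eqref{int third moment} and tends to $0$ as $n\to\infty$ uniformly in $k$. Passing $k\to\infty$ in the boundary terms $\int\chi_n\eta_k\Phi f_\cdot\,drdv$ is routine: the second moment of $f_t$ is controlled by $\int(1+|v|^3)f_t\,drdv$, which is finite for a.e.\ $t$ by \eqref{int third moment}, and the total mass of $f_t$ is preserved (this is the $\Phi=1$ case, which should be treated first since its justification requires only integrability of $f_t$). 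Specialising the resulting identity $\int_{\R^6}\Phi(v)f_T\,drdv=\int_{\R^6}\Phi(v)f_0\,drdv$ to $\Phi\in\{1,v_i,|v|^2\}$ yields \eqref{conv-laws}.

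\textbf{Main obstacle.} The chief technical point is the symmetrisation of the collision term in the absence of angular cut-off: $Q(d\theta)$ is integrable only when paired with an additional factor of $\theta$, so the cancellation $|\Phi(v^*)-\Phi(v)|=O(\theta)$ must be exploited \emph{inside} the $\theta$-integral rather than by bounding each half separately. The combined $(r,q)$- and $(v,u)$-symmetrisation must also be carried out while the spatial cut-off $\chi_n$ is still in place, and only then should one send $n\to\infty$, since otherwise the putative cancellation is blocked by the $\chi_n(r)$-$\chi_n(q)$ asymmetry. Once this bookkeeping is handled, \eqref{int third moment} supplies exactly the moment integrability needed to apply Fubini and dominated convergence, and the three conservation laws follow.
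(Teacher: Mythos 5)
The statement in question is not proved in this paper: the text explicitly says it is ``proven in \cite{sund}'' and cites it, so there is no proof here to compare against. The closest result proved in this document is the preceding appendix theorem, which establishes the same conservation laws but under the stronger hypotheses \eqref{C1}--\eqref{C2} (a uniform-in-time bound on $\int|v|^3|v\cdot\nabla_r f_t|$ and spatial decay of $f$) by integrating the transport term by parts in $r$ and invoking, without detail, Boltzmann's collision-invariant cancellation via \cite{CIPbook}. Your plan replaces \eqref{C1}--\eqref{C2} with the stated time-integrated third-moment bound \eqref{int third moment} and is in that sense more self-contained: the treatment of the transport term, bounding it by $\frac{C}{n}\int_0^T\!\int(1+|v|^3)f_t\,dr\,dv\,dt$ through the spatial cut-off and then letting $n\to\infty$, is clean and genuinely avoids assuming decay of $f_t$ at spatial infinity. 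Your observation that the spatial symmetrisation leaves a residual $\chi_n(r)-\chi_n(q)$ which vanishes as $n\to\infty$ is also correct; note moreover that since $\beta$ has compact support, $|\chi_n(r)-\chi_n(q)|\le C/n$ on its support, so the residual actually vanishes at rate $1/n$.

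The real difficulty, which the proposal does not resolve, is the integrability needed to justify the dominated-convergence and Fubini steps in the collision part, and this is a genuine gap for the parameter range stated. Your majorant for $\eta_k\equiv 1$, namely $\theta\,|v-u|(1+|v|+|u|)\,\sigma(|v-u|)$, grows like $(1+|v|+|u|)^{2+\gamma}$ in velocity, so after integrating $\theta$ against $Q$ it is controlled by the third-moment hypothesis only for $\gamma\le 1$; for $\gamma\in(1,2)$ the required moments of order $2+\gamma$ are not supplied. The removal of $\eta_k$ is more delicate still: the $k$-uniform bound on $\eta_k(v^*)-\eta_k(v)$ is $\min(2,C\theta|v-u|)$, whose $Q(d\theta)$-integral behaves like $(1+|v-u|)^\nu$ because of the non-cutoff singularity $b(\theta)\sim\theta^{-1-\nu}$, so for $\Phi=|v|^2$ the resulting dominating function is of order $(1+|v|+|u|)^{2+\gamma+\nu}$, requiring moments of order $2+\gamma+\nu$, i.e.\ the argument as written needs $\gamma+\nu\le1$. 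As the corollary is stated for the full range $\gamma\in(0,2)$, $\nu\in(0,1)$, you would need either to restrict the parameter range explicitly, to exploit the cancellation $\int_{S^1}\Gamma(\cdot,\xi)\,d\xi=0$ to gain an extra factor of $\theta$ before removing $\eta_k$, or to establish propagation of higher moments from \eqref{int third moment} via the equation itself before attempting the conservation argument. In short: the decomposition, the cut-off scheme, and the symmetrisation are the right ingredients, but the moment bookkeeping is incomplete and the claim that \eqref{int third moment} ``supplies exactly the moment integrability needed'' is not justified outside $\gamma+\nu\le 1$.
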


	\end{appendix}

\end{document}